%
%
%
%
\documentclass[11pt, psamsfonts]{amsart}

\usepackage{amssymb, amsmath, amsthm}
\usepackage{graphicx}
\usepackage{pstricks}
\usepackage[final, hypertex]{hyperref}
\usepackage[a4paper, centering]{geometry}
\geometry{text={15cm, 22cm}}



\newtheorem{theorem}{Theorem}[section]
\newtheorem{prop}[theorem]{Proposition}
\newtheorem{lemma}[theorem]{Lemma}
\newtheorem{cor}[theorem]{Corollary}
\theoremstyle{definition}
\newtheorem{dhef}[theorem]{Definition}
\theoremstyle{remark}
\newtheorem{rk}[theorem]{Remark}
\newtheorem{ehse}[theorem]{Example}

\newtheorem{claim}[theorem]{{\it Claim}}
\newenvironment{claim-b}
   {\begin{claim}\rm}
   {\end{claim}}


\long\def\elimina#1{} 

\def\R{\mathbb{R}}

\def\N{\mathbb{N}}
\def\Z{\mathbb{Z}}

\def\sn{\sigma}
\def\hsn{\hat\sigma}
\def\tsn{\tilde\sigma}

\def\snell{S}
\def\ls{l}
\def\lh{\tilde l}

\def\Lspq{L}
\def\sqd{\sqrt{2}}
\def\lng{\mathcal{L}_{\beta}}

\def\floor#1{\left\lfloor #1 \right\rfloor}
\def\cray#1{{[\![#1]\!]}}
\def\ocray#1{{]\!]#1]\!]}}
\def\coray#1{{[\![#1[\![}}

\def\pscal#1#2{\langle #1,\, #2\rangle}

\newcommand{\vincolo}[3][\hsn]{\frac{#2\, #1}{\sqrt{1-{#1}^2}}+\frac{#3\,
{#1}}{\sqrt{\beta^2-{#1}^2}}}

\newcommand{\lungh}[3][\hsn]{\frac{#2}{\sqrt{1-{#1}^2}}+\frac{\beta^2\, #3
}{\sqrt{\beta^2-{#1}^2}}}

\newcommand{\rad}[2][\hsn]{\dfrac{#2}{\sqrt{1-{#1}^2}}}

\newcommand{\radb}[2][\hsn]{\dfrac{#2}{\sqrt{\beta^2-{#1}^2}}}

\newcommand{\radc}[2][\hsn]{\dfrac{#2}{(1-{#1}^2)^{3/2}}}

\newcommand{\radbc}[2][\hsn]{\dfrac{#2}{(\beta^2-{#1}^2)^{3/2}}}


\def\kc{k_c}
\def\bc{\beta^c}

\begin{document}
\title[Limits of chessboard structures]
{On the Finsler metrics obtained as limits of chessboard structures}%

\author[M.~Amar]{Micol Amar}
\address{Dipartimento di Metodi e Modelli Matematici, Univ.\ di Roma I\\
Via Scarpa, 16 -- 00161 Roma (Italy)} \email[Micol
Amar]{amar@dmmm.uniroma1.it }
\author[G.~Crasta]{Graziano Crasta}
\address{Dipartimento di Matematica ``G.\ Castelnuovo'', Univ.\ di Roma I\\
P.le A.\ Moro 2 -- 00185 Roma (Italy)} \email[Graziano
Crasta]{crasta@mat.uniroma1.it}

\author[A.~Malusa]{Annalisa Malusa}
\email[Annalisa Malusa]{malusa@mat.uniroma1.it}

\date{April 28, 2008}

\keywords{Minimum time problems, Fermat's Principle, Finsler metrics}

\begin{abstract}
We study the geodesics in a planar chessboard structure with
two values $1$ and $\beta>1$. The results for a fixed structure allow
us to infer the properties of the Finsler metrics obtained, with an homogenization
procedure, as limit of oscillating chessboard structures.
\end{abstract}

\maketitle

\section{Introduction}

In this paper we deal with optical paths in a dioptric material
with parallel geometry and a chessboard structure on transversal
planes. Further to a bidimensional reduction, we fix the optical
features of the composite material in terms of its refractive index:
given $\beta>1$, let us define on $[0,2)\times[0,2)$ the function
\begin{equation}\label{f:defa}
{a_{\beta}}(x,y)=
\begin{cases}
\beta, & \textrm{if\ } (x,y)\in [(0,1) \times (1,2)]
\cup [(1,2) \times (0,1)], \\
1, & \textrm{otherwise},
\end{cases}
\end{equation}
and extend it by periodicity to a function defined on $\R^2$ which we
still denote by ${a_{\beta}}$.

Hence, normalizing to $1$ the speed of light in the vacuum,
light travels in the dioptric material with a speed $1/a_{\beta}$.

Since we are dealing with a system employing only refraction,
Fermat's principle dictates that the optical paths between two points minimize the
optical path length, which coincides with the time spent. Thus, in
an homogeneous material, where the speed of light is constant,
the optical path is a segment. Moreover, Fermat's principle leads
to Snell's law of refraction, which completely describes the
optical paths in layered materials (see \cite{BoWo} for a comprehensive introduction on the principles of optics).
The explicit description of the optical paths in the chessboard structure becomes harder since, for example,
no necessary condition prescribes their  behaviour at corners.

From a geometrical viewpoint, we are interested in the description of the geodesics
in the Riemannian structure $(\R^2, a_{\beta})$, that hereafter will be called
\textit{standard chessboard structure}.
We shall refer to \textit{light squares} or
to \textit{dark squares}, when, respectively, ${a_{\beta}}=1$  or ${a_{\beta}}=\beta$.

In the mathematical model, a virtual path
emanated from the origin
is a solution
$u\colon [0,T]\to\R^2$
to the differential inclusion
\begin{equation}\label{f:inclu}
\begin{cases}
u'(t) \in G_\beta(u(t)),\quad 0\leq t\leq T,\\
u(0) = 0\,,
\end{cases}
\end{equation} where the
set--valued map (with nonconvex values) $G_\beta$ is defined by
\[
G_\beta(x,y) = \frac{1}{a_\beta(x,y)}\, \partial\overline{B}_1(0),\quad
(x,y)\in\R^2
\]
(see e.g.\ \cite{AuCe} for an introduction to differential inclusions).

Fermat's Principle states that a ray of light from $O$ to $\xi$ is a solution
of the minimum time problem with target $\xi$
\begin{equation}\label{f:Tmin}
T_\beta(\xi) =
\inf\{T\geq 0;\
\exists\ u(\cdot)\ \text{solution of (\ref{f:inclu}), with}\
u(T) = \xi\}\,.
\end{equation}
In \cite{CeFM} it is proved that
the infimum in (\ref{f:Tmin}) is reached.
Moreover, if $u(\cdot)$ is a solution of (\ref{f:inclu}) satisfying
$u(T) = \xi$, then the optical length of the curve
$\Gamma = \{u(t);\ 0\leq t\leq T\}$ is
\[
\lng(\Gamma) =
\int_0^T a_{\beta}(u(t))\, |u'(t)|\, dt
= T.
\]
Then optical paths are
geodesic curves in the Riemannian structure $(\R^2, a_{\beta})$.
We underline again that the global minimization procedure considers only
refracted rays, excluding all reflected rays,
because a reflected ray is never a global geodesics. Hence our
results have a reasonable physical meaning  either if
the number of
interfaces traversed in the periodic media is not
very high, or for $\beta$ near to $1$, since in both cases the reflected light
can be neglected. Anyhow the results are intended as a depiction of curves
of minimal length in the Riemannian structure $(\R^2, a_{\beta})$.

A starting point is the elementary observation that, for $\beta > 2$, any geodesic joining two points
in the light material (i.e. in the set $\{a_{\beta}=1\}$) is
a ``light path'', i.e.\ it never crosses the dark material.
With little more work it is not difficult to prove the same conclusion if
$\beta > \sqrt{2}$, provided that the endpoints of the geodesic have integer coordinates.
Moreover, the value $\beta = \sqrt{2}$ is an optimal threshold in this class of geodesics, since the diagonal
of a dark square is a geodesic for every $\beta \leq \sqrt{2}$.



In this paper we obtain a perhaps surprising result,
focusing our attention to geodesics joining two points with
coordinates $(2n+j,j)$, $n$, $j\in\Z$, that we call
{\em light vertices}.
For $\beta \geq\sqrt{3/2}$
we depict explicitly the geodesics joining the origin
to a light vertex.
As a consequence, we prove that
the threshold value for the minimality of light paths
is given by $\bc_0\in (\sqrt{3/2}, \sqrt{2})$,
which is exactly the value of $\beta$ such that the optical
path joining $O$ with the light vertex $(3,1)$
has the same length ($2+\sqrt{2}$) of the
optimal light paths.
More precisely, we show that
for $\beta>\bc_0$ the geodesics are optimal light paths,
whereas for $\sqrt{3/2}<\beta<\bc_0$
the minimal curves are constructed
concatenating the maximal number of translations of the optical path joining $O$ with $(3,1)$,
with segments either on the sides of the squares or on light diagonals.

For $1<\beta<\sqrt{3/2}$ the characterization of the geodesics
seems to be a hard problem,
for reasons that will be clarified in Section~\ref{s:geovere}.
Anyhow, we are able to compute the optical paths
joining the origin to a light vertex in
a small cone $\{0\leq K\, y \leq x\}$,
where $K = K(\beta)$ is an odd positive integer,
whose value diverges to $+\infty$ as $\beta$ approaches $1$.

\smallskip
The knowledge of the minimal length of curves joining the origin to
light vertices
in the chessboard structure is enough to characterize the
so-called homogenized model.
Namely,
the results described above give information
about the optical paths
in an inhomogeneous dioptric material whose observed refractive index,
at a mesoscopic level, is given by the chessboard structure, that is the observed
refractive index at a given scale $\varepsilon>0$ is
$a^{\varepsilon}_{\beta}(x,y)=a_{\beta}\left(x/{\varepsilon},y/\varepsilon\right)$.
We are interested in the behavior of the optical length of geodesics
when $\varepsilon\to 0$.

We already know that, at the scale $\varepsilon$, a virtual path emanated from the origin
is a solution
$u\colon [0,T]\to\R^2$
of the differential inclusion
\begin{equation}\label{f:inclue}
\begin{cases}
u'(t) \in \dfrac{1}{a_\beta^{\varepsilon}(u(t))}\, \partial\overline{B}_1(0),
\quad 0\leq t\leq T,\\
u(0) = 0\,,
\end{cases}
\end{equation}
and Fermat's Principle states that a ray of light from $O$ to $\xi$ is a solution
of the minimum time problem with target $\xi$
\begin{equation}\label{f:Tmine}
T_\beta^{\varepsilon}(\xi) =
\inf\{T\geq 0;\
\exists\ u(\cdot)\ \text{solution of (\ref{f:inclue}), with}\
u(T) = \xi\}\,.
\end{equation}

We are interested in the characterization of the limit, as $\varepsilon \to 0$, of the minimum
time problems (\ref{f:inclue})--(\ref{f:Tmine}).

The minimum time problems can be rephrased in terms of
minimum problems of the Calculus of Variations.
Let us denote by $\mathcal{L}_\beta^\varepsilon$ the length functional in the chessboard
structure corresponding to $a_\beta^\varepsilon$,
that is
\[
\mathcal{L}_\beta^\varepsilon(u)=\int_0^1 a_\beta^\varepsilon(u(t))\, |u'(t)|\, dt\,,\qquad u\in AC([0,1],\R^2)\,,
\]
and by $d_{\beta}^{\varepsilon}(0,\xi)$ the distance between $O$ and $\xi$
in such a structure, that is
\begin{equation}\label{f:defdist}
{d_{\beta}^{\varepsilon}}(0,\xi)=\inf\left\{\mathcal{L}_\beta^\varepsilon(u):\ u\in AC([0,1];\R^2)
\text{ s.t. } u(0)=0,\ u(1)=\xi\right\}\,.
\end{equation}

If $u(\cdot)$ is a solution of (\ref{f:inclue}) satisfying
$u(T) = \xi$, then $T$ equals the optical length of the curve
$\Gamma = \{u(t);\ 0\leq t\leq T\}$
so that
\begin{equation}\label{f:Tgamma}
T_\beta^{\varepsilon}(\xi) = d_\beta^{\varepsilon}(0,\xi),\qquad
\xi\in\R^2.
\end{equation}
The advantage of this formulation is that
the asymptotic behavior of ${d_{\beta}^{\varepsilon}}$ can be discussed in terms of $\Gamma$--convergence of the functionals
$F^{\varepsilon}_{\beta}$
(see e.g.\ \cite{Brai} for an introduction to $\Gamma$--convergence).
In \cite{AmVi} it was shown that the sequence $(\mathcal{L}_{\beta}^\varepsilon)$ $\Gamma$--converges in
$AC([0,1],\R^2)$ (w.r.t. the $L^1$ topology) to the functional
\[
\mathcal{L}_\beta^{hom}(u)=\int_0^1 {\varPhi_\beta}(u'(t))\, dt\,,\qquad u\in AC([0,1],\R^2)\,,
\]
where ${\varPhi_\beta}\colon \R^2 \to [0, +\infty)$ is a convex, positively $1$-homogeneous function,
such that $|\xi|\leq {\varPhi_\beta}(\xi)\leq \beta |\xi|$ for every $\xi\in\R^2$.
As a consequence, ${\varPhi_\beta}$ turns out to be a homogeneous Finsler metric in $\R^2$
(see e.g.\ \cite{BCS} for an introduction to Finsler geometry).

In \cite{AcBu} it is shown that ${\varPhi_\beta}$ is not a
Riemannian metric in $\R^2$ for every $\beta>1$.
In this paper we shall refine this result proving that
the optical unit ball
$\{\varPhi_\beta\leq 1\}$ is neither strictly convex
nor differentiable
(see Theorem~\ref{t:edges} and Corollary~\ref{c:regul} below).

Since ${\varPhi_\beta}$ is characterized by
\[
{\varPhi_\beta}(\xi)=\lim_{\varepsilon \to 0^+} {d_{\beta}^{\varepsilon}}(0,\xi)\,,
\]
then the limit of the minimum
time problems (\ref{f:inclue})--(\ref{f:Tmine}) is given by
\[
T_\beta(\xi) =
\inf\{T\geq 0;\
\exists\ u(\cdot)\ \text{such that\ } \varPhi_\beta(u'(t))=1,\ u(0)=0,\ u(T)=\xi\}
={\varPhi_\beta}(\xi)\,.
\]
This is what we have called homogenized model related to the chessboard structure.

Since ${\varPhi_\beta}$ is positively 1-homogeneous, it is completely determined by the geometry of
the optical unit sphere $\{{\varPhi_\beta} = 1\}$, which is, in some sense, a generalized geometric
wavefront with source point located at $O$.
Moreover, due to elementary symmetry properties,
it is enough to describe the set
$\{{\varPhi_\beta} = 1\}\cap \{0\leq y\leq x\}$.

Starting from our results on chessboard structures,
we obtain that,
if $\beta\geq \bc_0$, then the homogenized metric is
\begin{equation}\label{fottag}
{\varPhi_\beta}(x,y)=(\sqd-1)\min\{|x|,|y|\}+\max\{|x|,|y|\}\,,
\qquad \forall\ (x,y)\in\R^2\,,
\end{equation}
and the geometric wavefront $\{{\varPhi_\beta}=1\}$ is the regular octagon
inscribed in the unit circle, whose vertices lie on the coordinate axes and on the diagonals.
This result generalizes
the trivial remark concerning the case $\beta\geq 2$ (see \cite{Brai}).
On the other,
we obtain that the octagonal geometry of the wavefront
breaks for $\beta=\bc_0$,
and the optical unit sphere becomes an irregular polygon with
sixteen sides for $\sqrt{3/2} \leq \beta < \bc_0$
(see Figure~\ref{fig:ottag}).
These are two of the main results of our paper;
we refer to Theorem~\ref{t:Phi} below for their
precise statement.

For $1<\beta<\sqrt{3/2}$
we will be able to compute the optical unit ball
in the two small cones $\{K\, |y| \leq |x|\}$
and $\{K\, |x| \leq |y|\}$,
where $K=K(\beta)$ is the odd positive integer
introduced above for the chessboard structure.
More precisely, we shall show that
in these regions
the boundary $\{{\varPhi_\beta}=1\}$ is piecewise flat
with corners at the points
$(1,0)$, $(0,1)$, $(-1,0)$, and $(0,-1)$
(see Theorem~\ref{t:edges}).
As a consequence,
the optical unit ball
$\{\varPhi_\beta\leq 1\}$ is neither strictly convex
nor differentiable.

We conclude this introduction with a warning
on the physical interpretation of our results concerning the homogenized
model.
First of all, the geometrical optics approximation
is valid if the lengthscale $\varepsilon$ is much
greater than the wavelength of light.
If $\varepsilon$ is of the same order of magnitude
of the wavelength, then we fall in the domain
of photonic crystals optics, for which the full
system of Maxwell equations must be considered.
On the other hand, even in the range of
geometric optics,
our global minimization procedure considers only
refracted rays, excluding all reflected rays.
Since, at a macroscopic level, the number of
interfaces traversed in the periodic media can be
very high, the reflected light
cannot in general be neglected.


\bigskip
In the paper the following notation will be used.
\begin{itemize}
\item
$\cray{P,Q}$: closed segment joining $P, Q\in\R^2$

\item
$\ocray{P,Q} = \cray{P,Q}\setminus\{P\}$, $\coray{P,Q}= \cray{P,Q}\setminus\{Q\}$

\item
$\cray{P_1, P_2, \ldots, P_n}$:
polygonal line joining the ordered set of points
$P_1, P_2,\ldots,P_n$.

\item
$\floor{t}=\max \{k\in \Z\ \colon\ k\ \leq t\}$.

\item
$f_s=\dfrac{\partial f}{\partial s}$: partial derivative of a function $f$
with respect to $s$

\item
$\snell(A,B)$: Snell path joining $A$ to $B$
(defined in Section~\ref{s:lay})

\item
$\lng(\Gamma)=\lng^1(\Gamma)$: length of $\Gamma$ in the standard chessboard structure (optical length).
\end{itemize}

\section{Snell paths}\label{s:lay}


Let us consider the flat Riemannian structure $(\R^2,\overline{a}_{\beta})$, where
\[
\overline{a}_{\beta}(x,y)=
\begin{cases}
1 & \textrm{if \ }\floor{x} \textrm{\ is even},\\
\beta& \textrm{if \ } \floor{x} \textrm{\ is odd}\,,
\end{cases}
\qquad \beta\in\R,\ \beta>1\,.
\]

Note that this structure corresponds to a composite medium whose structure is made
by alternate vertical strips of light and dark material.

It is well known that for every pair $A=(x_A,y_A)$, $B=(x_B,y_B)$ there exists a
unique curve
of minimal length (the geodesic curve) joining $A$ to $B$,
which is an affine path in every vertical strip $\{\floor{x}=k\}$, $k\in\Z$.
Moreover, at every
interface between two strips, the change of slope
is governed by the Snell's Law of refraction
\begin{equation}\label{f:snell}
\sin \theta_1 =\beta \sin \theta_2\,,
\end{equation}
where $\theta_1$ and $\theta_2$ are the angles of incidence with the interface
from the  light strip and
the  dark strip, respectively (see, e.g., \cite[\S3.2.2]{BoWo} or \cite[\S3.4]{Ces}).

In the sequel, this geodesic curve
will be called the \textit{Snell path} joining $A$ and $B$
and will be denoted by  $\snell(A,B)$.
In order to fix the ideas, we shall always assume that $x_A<x_B$, and $y_A\leq y_B$.
We shall refer to the positive quantity
$x_B-x_A$ as the \textit{thickness} of the Snell path.

\begin{figure}
\includegraphics[height=4cm]{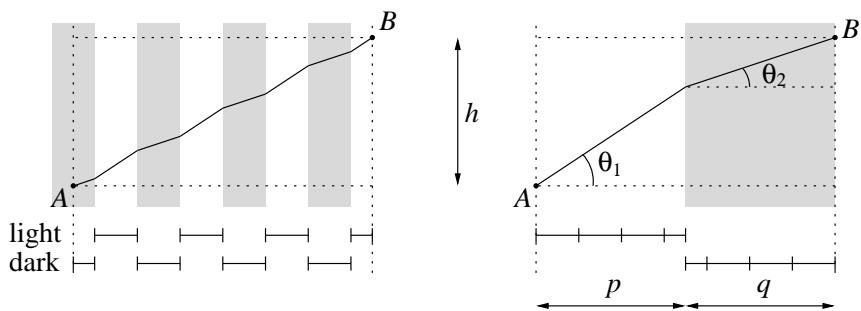}
\caption{A Snell path in the layered material and the equivalent Snell path with
a single interface.}
\label{figSnellstrips}
\end{figure}

Let $p$, $q\geq 0$ be, respectively, the thickness of the light and of the dark zone
crossed by the path $\snell(A,B)$, so that $x_B-x_A=p+q$, and
let $h$ be the vertical height $h=y_B-y_A$.
Since $h=p\tan \theta_1+q \tan \theta_2$ (see Figure \ref{figSnellstrips}), and
(\ref{f:snell}) holds true,
then $\hsn(p,q,h)=\sin \theta_1$
is implicitly determined in term of $p$, $q$ and $h$
by the constraint
\begin{equation}\label{f:vincolo}
\vincolo{p}{q}=h\,.
\end{equation}
Clearly, $\hsn(p,q,h)$ is a strictly decreasing function w.r.t. $p$ and $q$, while it
is a strictly increasing function w.r.t. $h$.

Finally, the optical length of the Snell path $\snell(A,B)$,
given by $p/\cos\theta_1+\beta q/\cos\theta_2$,
can be expressed in terms of $p$, $q$ and $h$
taking again into account \eqref{f:snell}:
\begin{equation}\label{f:lung}
\Lspq(p,q,h) := \lungh[\hsn(p,q,h)]{p}{q}\,.
\end{equation}

%

\begin{lemma}\label{l:lpq}
Let $\hsn$ and  $\Lspq$ be defined by (\ref{f:vincolo}) and (\ref{f:lung}) respectively.
Then
\[
{\Lspq}_p(p,q,h)=\sqrt{1-\hsn^2}\,, \qquad {\Lspq}_q(p,q,h)=\sqrt{\beta^2-\hsn^2}\,,
\]
for every $p$, $q\geq 0$, and for every $h\in\R$.
\end{lemma}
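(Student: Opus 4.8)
The plan is to reduce everything to the equivalent single--interface picture of Figure~\ref{figSnellstrips} and then read off the two derivatives, either by the envelope theorem or, in a completely elementary way, by differentiating the constraint \eqref{f:vincolo} directly. Throughout one may assume $p+q>0$ (if $p=q=0$ then necessarily $h=0$ and the statement is empty). First I would record that \eqref{f:vincolo} genuinely defines $\hsn$ as a smooth function of $(p,q,h)$: for fixed $p,q$ with $p+q>0$ the left--hand side of \eqref{f:vincolo} is an odd, strictly increasing function of $\hsn\in(-1,1)$ that vanishes at $0$ and tends to $\pm\infty$ as $\hsn\to\pm 1$, so $\hsn$ is uniquely determined; moreover its derivative with respect to $\hsn$, namely
\[
D:=\frac{p}{(1-\hsn^2)^{3/2}}+\frac{\beta^2 q}{(\beta^2-\hsn^2)^{3/2}}\,,
\]
is strictly positive, so the implicit function theorem applies and $\hsn$ is $C^\infty$.

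\emph{First approach (envelope theorem).} Collapsing all the light strips crossed by $\snell(A,B)$ to a single strip of thickness $p$ and all the dark ones to a single strip of thickness $q$, a competitor path is determined by the height $t$ at which it meets the interface, and its optical length is $\ell(t)=\sqrt{p^2+t^2}+\beta\sqrt{q^2+(h-t)^2}$. The Snell path is the minimizer $t^\ast$, so $\Lspq(p,q,h)=\min_t\ell(t)=\ell(t^\ast)$, and $\ell'(t^\ast)=0$ is exactly Snell's law with $\sin\theta_1=t^\ast/\sqrt{p^2+t^{\ast 2}}=\hsn$ and $\sin\theta_2=(h-t^\ast)/\sqrt{q^2+(h-t^\ast)^2}=\hsn/\beta$. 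Since $t^\ast$ is a stationary point of $\ell$, the envelope theorem gives ${\Lspq}_p=\partial_p\ell(t^\ast)=p/\sqrt{p^2+t^{\ast 2}}=\cos\theta_1=\sqrt{1-\hsn^2}$ and ${\Lspq}_q=\partial_q\ell(t^\ast)=\beta q/\sqrt{q^2+(h-t^\ast)^2}=\beta\cos\theta_2=\beta\sqrt{1-\hsn^2/\beta^2}=\sqrt{\beta^2-\hsn^2}$.

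\emph{Second approach (direct computation).} Differentiating \eqref{f:lung} by the chain rule, keeping track of the dependence $\hsn=\hsn(p,q)$, one gets ${\Lspq}_p=(1-\hsn^2)^{-1/2}+\hsn\,\hsn_p\,D$ and ${\Lspq}_q=\beta^2(\beta^2-\hsn^2)^{-1/2}+\hsn\,\hsn_q\,D$, with the very same $D$ as above. On the other hand, differentiating the constraint \eqref{f:vincolo} and using that its $\hsn$--derivative is precisely $D$ yields $\hsn_p=-\hsn\,(1-\hsn^2)^{-1/2}/D$ and $\hsn_q=-\hsn\,(\beta^2-\hsn^2)^{-1/2}/D$. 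Substituting, the factor $D$ cancels, leaving ${\Lspq}_p=(1-\hsn^2)/\sqrt{1-\hsn^2}=\sqrt{1-\hsn^2}$ and ${\Lspq}_q=(\beta^2-\hsn^2)/\sqrt{\beta^2-\hsn^2}=\sqrt{\beta^2-\hsn^2}$.

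There is no genuine obstacle here: the computation is a couple of lines once one notices that the combination $D$ produced by differentiating $\Lspq$ is exactly the one sitting in the denominator of $\hsn_p$ and $\hsn_q$ --- which is just the analytic shadow of the Fermat stationarity exploited in the first approach. The only points deserving a word of care are the well--posedness and smoothness of $\hsn$ (handled by the monotonicity remark above) and the boundary cases $p=0$ or $q=0$ (but not both), for which the same computation yields the identities for the corresponding one--sided derivatives, or one simply passes to the limit by continuity.
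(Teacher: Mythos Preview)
Your proposal is correct. Your second approach is exactly the paper's proof: differentiate \eqref{f:vincolo} with respect to $p$ (resp.\ $q$), substitute into the chain--rule expression for ${\Lspq}_p$ (resp.\ ${\Lspq}_q$), and watch the terms collapse; the paper does not name the quantity you call $D$, but it is the same object and the cancellation is identical.

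Your first approach via the envelope theorem is a genuine alternative not in the paper. It buys a clean physical interpretation: writing $\Lspq(p,q,h)=\min_t\bigl(\sqrt{p^2+t^2}+\beta\sqrt{q^2+(h-t)^2}\bigr)$ and using that the minimizer $t^\ast$ is stationary makes the identities ${\Lspq}_p=\cos\theta_1$ and ${\Lspq}_q=\beta\cos\theta_2$ immediate, with no need to compute $\hsn_p$, $\hsn_q$ at all. The direct computation, on the other hand, produces the explicit formulas for $\hsn_p$ and $\hsn_q$ as a by--product, and these are used later in the paper (e.g.\ in \eqref{f:deriv} and in the proof of Theorem~\ref{t:dmon}), so the paper's route is not wasted effort.
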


\begin{proof}
Differentiating (\ref{f:vincolo}) w.r.t. $p$, we get
\[
\frac{\hsn}{\sqrt{1-\hsn^2}}+\frac{p\,\hsn_p}{(1-\hsn^2)^{3/2}}
+\frac{\beta^2\, q\, \hsn_p}{(\beta^2-\hsn^2)^{3/2}}=0\,.
\]
Hence
\begin{equation*}
\begin{split}
\Lspq_p(p,q,h) & =\frac{1}{\sqrt{1-\hsn^2}}+\frac{p\,\hsn_p\hsn}{(1-\hsn^2)^{3/2}}
+\frac{\beta^2\, q\, \hsn_p\hsn}{(\beta^2-\hsn^2)^{3/2}} \\
& = \frac{1}{\sqrt{1-\hsn^2}}- \frac{\hsn^2}{\sqrt{1-\hsn^2}}= \sqrt{1-\hsn^2}\,.
\end{split}
\end{equation*}
By an analogous computation one obtains the expression for $\Lspq_q$.
\end{proof}

\begin{rk}\label{r:thfix}
By Lemma \ref{l:lpq}, it follows that, given the thickness $\tau=p+q$
and the height $h\in\R$ of a Snell path
we have that
\[
\frac{d}{dq} \Lspq(\tau-q, q,h) = -\sqrt{1-\hsn^2}+\sqrt{\beta^2-\hsn^2}>0\,.
\]
The geometrical meaning of this formula is clear:
for  Snell paths with fixed thickness,
the more is the thickness of the dark material crossed,
the more is the optical length of the path.
\end{rk}

\section{The normalized length}

Let us consider now $\R^2$ endowed with the standard chessboard structure.

\begin{dhef}
The $n$-th \textit{light diagonal}, $n\in\Z$, is the straight line $D_n$ of equation $y=x-2n$.
A \textit{light vertex} is a point having integer coordinates and belonging to a light
diagonal.
\end{dhef}

\begin{dhef}
Given two points $A$ and $B$ in the same horizontal strip $\{y\in [r,r+1]\}$, $r\in\Z$,
the Snell path joining $A$ to $B$ (in the chessboard structure) is the geodesic $\snell(A,B)$ in the corresponding
parallel layer structure 
$\overline{a}(x+r,y)$.
\end{dhef}

We are interested in the properties of the Snell paths starting from a light vertex $A$ (say $A=O$, without loss of generality)
and ending in a point $B$ on the other side of the horizontal strip containing $A$ (say $B=(x_B,1)$, $x_B > 0$)
(see Figure \ref{fig:figLen}).

\begin{figure}
\includegraphics[height=3cm]{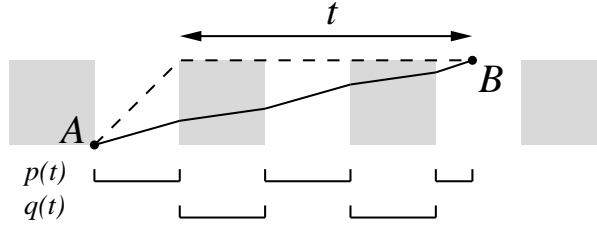}
\caption{The solid line corresponds to the Snell path $\snell(A,B)$,
while the dashed line is a minimal path joining $A$ and $B$
without crossing the dark squares.}
\label{fig:figLen}
\end{figure}

If $0<x_B\leq 1$, clearly $\snell(O,B)=\cray{O,B}$ is the unique geodesic joining $O$ to $B$.
On the other hand, $\snell(O,B)$ need not to be a geodesic when $x_B>1$.
Namely, we already know
that for $\beta$ large enough the optical length of $\snell(O,B)$ is strictly greater than
the optical length of the path obtained
by a concatenation of horizontal segments on the lines $x=0$ or $x=1$, with total length $x_B-1$,
and a segment on a light diagonal, with length $\sqrt{2}$.
In this section we discuss the behavior of the difference
$\lng(\snell(O,B))-x_B+1-\sqrt{2}$ for $x_B\geq 1$.
To this aim, for $t\geq 0$ and $\beta\geq 1$,
with some abuse of notation we define
$\hsn(t,\beta)= \hsn(p(t), q(t),1)$,
where
\begin{equation}\label{f:defq}
q(t)=
\begin{cases}
\frac{\floor{t+1}}{2} & \textrm{if\ } \floor{t}\ \textrm{is odd}, \\
t-\frac{\floor{t}}{2} & \textrm{if\ } \floor{t}\ \textrm{is even},
\end{cases}
\end{equation}
and $p(t)=t+1-q(t)$.
Recall that $\hsn(t,\beta)$ is determined by the constraint
\begin{equation}\label{f:vinct}
\vincolo{p(t)}{q(t)}-1=0\,.
\end{equation}
As a consequence we have
\begin{equation}\label{f:deriv}
\hsn_t(t,\beta) = \frac{{-\dfrac{\hsn p_t}{\sqrt{1-\hsn^2}}-\dfrac{\hsn q_t}{\sqrt{\beta^2-\hsn^2}}}}
{{\dfrac{p}{(1-\hsn^2)^{3/2}}+\dfrac{\beta^2 q}{(\beta^2-\hsn^2)^{3/2}}}} <0\,, \qquad t>0,\ t\not\in\N\,,
\end{equation}
since $p_t(t)$, $q_t(t)\geq 0$, and $p_t(t)+q_t(t)=1$ for every $t>0$, $t\not\in\N$ .
Thus the map $t\mapsto \hsn(t,\beta)$ is strictly decreasing in $[0,+\infty)$,
and satisfies $\hsn(0,\beta) = 1/\sqd$,
$\lim_{t\to +\infty} \hsn(t,\beta) = 0$ (see Figure \ref{fig:sigma}).

Moreover, as a straightforward consequence of the Implicit Function Theorem,
we have that $\hsn_{\beta}(t,\beta)>0$ for every $t>0$. In particular we have
\begin{equation}\label{f:sigbeta}
\hsn(t,\beta)>\hsn(t,1)=\frac{1}{\sqrt{(1+t)^2+1}}\,,\qquad \forall t>0,\
\forall \beta>1.
\end{equation}

\begin{figure}
\includegraphics[height=4cm]{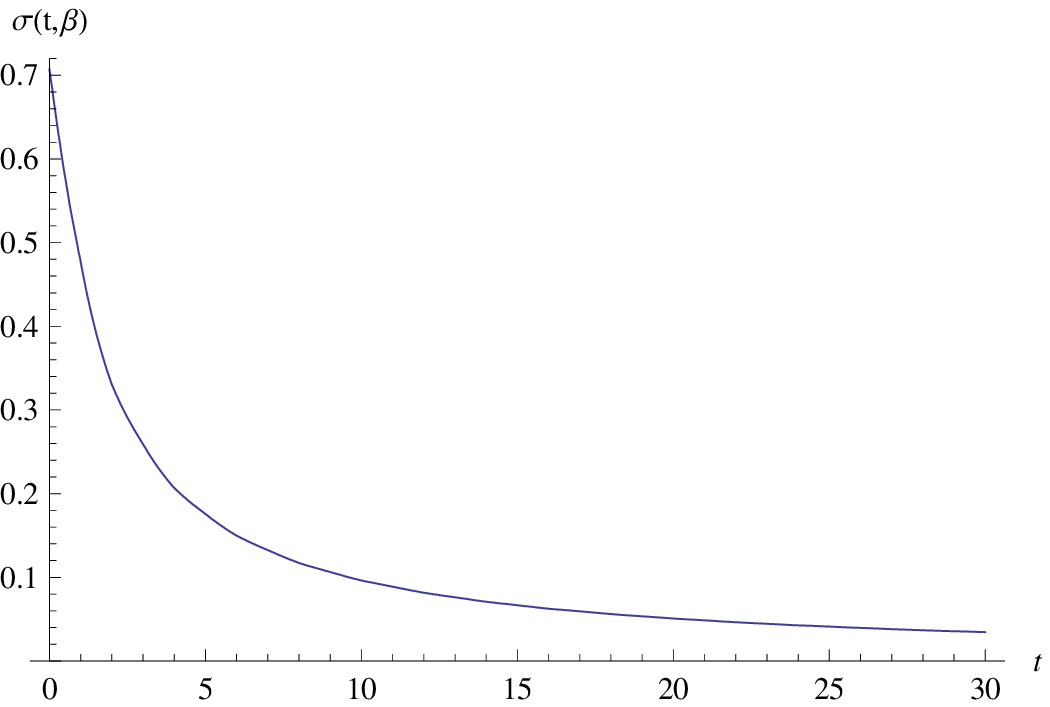}
\caption{Plot of $\hsn(t,\beta)$, $\beta=1.15$.}
\label{fig:sigma}
\end{figure}

\begin{dhef}
We shall call \textit{normalized length} the function
\begin{equation}\label{f:defelle}
\ls(t,\beta):= 
\lungh[\hsn(t,\beta)]{p(t)}{q(t)}-t-\sqd\,.
\end{equation}
\end{dhef}

Notice that $l(t,\beta)$ is the length of the Snell path joining the origin $(0,0)$ with
the point $(t+1,1)$
normalized by subtracting the minimal length of the paths joining the same two points
without crossing the dark squares (see Figure \ref{fig:figLen}).

In order to simplify the notation we introduce the sets
\[
I_L=\bigcup_{k\in\N}(2k+1,2k+2), \qquad
I_D=\bigcup_{k\in\N}(2k,2k+1)\,.
\]
If $t\in I_L$
then the last segment
of the Snell path is in the interior of
a light square, while, if $t\in I_D$, it is in the interior of
a dark square.

The basic properties of the normalized length are collected in the following lemma.

\begin{lemma}\label{r:spiegaz1}
The following properties hold.
\begin{itemize}
\item[(i)] $\ls_t(t,\beta)=\sqrt{1-\hsn^2(t,\beta)}-1$ for every $t\in I_L$;
\item[(ii)] $\ls_t(t,\beta)=\sqrt{\beta^2-\hsn^2(t,\beta)}-1$ for every $t\in I_D$;
\item[(iii)] $\ls(\cdot,\beta)$ is strictly convex in any interval of $I_L\cup I_D$;
\item[(iv)] $\ls(\cdot,\beta)$ is strictly monotone decreasing in any interval of $I_L$;
\item[(v)] if $\beta \geq \sqrt{3/2}$, then $\ls(\cdot,\beta)$ is strictly monotone increasing
in any interval of $I_D$;
\item[(vi)] if $1<\beta < \sqrt{3/2}$, then there exists a unique $t_0>0$,
characterized by $\hsn(t_0,\beta)=\sqrt{\beta^2-1}$, and such that
\[
\begin{split}
\ls_t(t,\beta)<0, & \quad \forall\ t\in [0,t_0)\cap I_D\,, \\
\ls_t(t,\beta)>0, & \quad \forall\ t\in (t_0,+\infty)\cap I_D\,.
\end{split}
\]
\end{itemize}
\end{lemma}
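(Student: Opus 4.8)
The plan is to deduce all six properties from one chain-rule computation of $\ls_t$ together with the sign information on $\hsn$ and $\hsn_t$ already established. First I would record the behaviour of $p(t)$ and $q(t)$: from \eqref{f:defq}, on every interval of $I_L$ (where $\floor{t}$ is odd) $q$ is constant, hence $q_t=0$ and $p_t=1$, whereas on every interval of $I_D$ (where $\floor{t}$ is even) $q(t)=t-\floor{t}/2$, hence $q_t=1$ and $p_t=0$; moreover, by the constraint \eqref{f:vinct} and the implicit function theorem (the relevant derivative being the positive denominator appearing in \eqref{f:deriv}), the function $\hsn(\cdot,\beta)$, and therefore $\ls(\cdot,\beta)$, is smooth on the interior of each such interval. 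Differentiating $\ls(t,\beta)=\Lspq(p(t),q(t),1)-t-\sqd$ and using Lemma~\ref{l:lpq} yields
\[
\ls_t(t,\beta)=\sqrt{1-\hsn^2}\,p_t+\sqrt{\beta^2-\hsn^2}\,q_t-1,
\]
which specialises to $\sqrt{1-\hsn^2}-1$ on $I_L$ and to $\sqrt{\beta^2-\hsn^2}-1$ on $I_D$. This proves (i) and (ii).

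For (iii) I would differentiate once more, obtaining $\ls_{tt}=-\hsn\hsn_t/\sqrt{1-\hsn^2}$ on $I_L$ and $\ls_{tt}=-\hsn\hsn_t/\sqrt{\beta^2-\hsn^2}$ on $I_D$; since $0<\hsn<1$ and $\hsn_t<0$ for $t\notin\N$, both expressions are strictly positive, giving strict convexity on each interval of $I_L\cup I_D$. Property (iv) then follows at once from (i): since $\hsn(t,\beta)>0$ we have $\sqrt{1-\hsn^2}<1$, so $\ls_t<0$ on $I_L$.

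Properties (v) and (vi) both amount, via (ii), to discussing the sign of $\beta^2-1-\hsn^2(t,\beta)$ on $I_D$. The elementary fact I would use is that $\hsn(\cdot,\beta)$ is continuous and strictly decreasing on $[0,+\infty)$ with $\hsn(0,\beta)=1/\sqd$ and $\hsn(t,\beta)\to 0$; hence $0<\hsn(t,\beta)<1/\sqd$ for every $t>0$, in particular throughout $I_D$, and the range of $\hsn(\cdot,\beta)$ is exactly $(0,1/\sqd]$. If $\beta\geq\sqrt{3/2}$, then $\beta^2-1\geq 1/2>\hsn^2(t,\beta)$ on $I_D$, so $\sqrt{\beta^2-\hsn^2}>1$ and $\ls_t>0$ there, which is (v). If $1<\beta<\sqrt{3/2}$, then $\sqrt{\beta^2-1}\in(0,1/\sqd)$ lies in the range of $\hsn(\cdot,\beta)$, so there is a unique $t_0>0$ with $\hsn(t_0,\beta)=\sqrt{\beta^2-1}$; by monotonicity $\hsn(t,\beta)>\sqrt{\beta^2-1}$, hence $\ls_t<0$, for $t<t_0$, and $\hsn(t,\beta)<\sqrt{\beta^2-1}$, hence $\ls_t>0$, for $t>t_0$, in each case restricted to $I_D$, which is (vi).

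The computations here are routine; the only points deserving care are the exact values of $p_t,q_t$ on $I_L$ and $I_D$ (together with the harmless kinks of $p,q$ at the integers, which is why the statement restricts to intervals of $I_L\cup I_D$), and the observation that $\hsn$ stays strictly below $1/\sqd$ away from $t=0$ — it is precisely this bound that singles out $\sqrt{3/2}$, rather than any larger value, as the threshold separating cases (v) and (vi).
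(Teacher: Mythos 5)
Your proof is correct and follows essentially the same route as the paper: (i)–(ii) by the chain rule through Lemma~\ref{l:lpq} with $p_t=1,q_t=0$ on $I_L$ and $p_t=0,q_t=1$ on $I_D$, (iv) from (i), (v)–(vi) from (ii) together with the bound $0<\hsn(t,\beta)<1/\sqd$ for $t>0$, and (iii) from the strict monotonicity of $\hsn(\cdot,\beta)$ (your explicit second-derivative computation is just the paper's monotonicity observation made quantitative). No gaps.
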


\begin{proof}
The derivatives in (i) and (ii) follow from Lemma \ref{l:lpq}, upon observing that
$p_t=1$ and $q_t=0$ in $I_L$, while $p_t=0$ and $q_t=1$ in $I_D$. Clearly (i) implies (iv),
while (ii) and the fact that $0\leq \hsn^2(t,\beta)\leq {1}/{2}$ imply (v) and (vi).

(iii) follows from (i), (ii), and the fact that $\hsn(\cdot,\beta)$ is a decreasing function.
\end{proof}

In conclusion, since $\ls(0,\beta)=0$ for every $\beta\geq 1$, by Lemma \ref{r:spiegaz1}  we have that,
for $\beta\geq \sqrt{3/2}$,
$\ls(t,\beta)>0$ for $t\in (0,1)$ and
the local minima of $\ls(\cdot,\beta)$ are attained at $t=2k$, $k\in \N$, corresponding to the Snell
paths ending in the light vertices (see Figures \ref{figp2} and \ref{figp3}).

On the contrary, if $\beta <\sqrt{{3}/{2}}$, a new local minimum for $\ls(\cdot,\beta)$ may appear
(see Figures~\ref{figp1} and~\ref{figp4}).
One may wonder if $\ls(t_0,\beta)$ is an absolute minimum for some $\beta$.
The following result shows that this is never the case.
(We warn the reader that the proof is rather long and technical,
and can be skipped in a first reading.)

\begin{theorem}\label{t:t1}
Given $1<\beta<\sqrt{3/2}$, let $t_0>0$ be as in Lemma \ref{r:spiegaz1}(vi).
Then
\[
\ls(2k_0+2,\beta)\leq\ls(t_0,\beta),
\]
where $k_0=\min \{k\in\N\colon\ t_0\leq 2k+2\}$.
Moreover, the strict inequality holds if $t_0\neq 2k_0+2$.
\end{theorem}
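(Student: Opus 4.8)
The plan is to reduce the statement, by a short case analysis, to one quantitative inequality between two one-dimensional integrals, and then to estimate those integrals.

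Since the Snell path $\snell(O,(t+1,1))$ depends continuously on $t$, the map $\ls(\cdot,\beta)$ is continuous on $[0,+\infty)$, so by Lemma~\ref{r:spiegaz1}(i),(iv) it is strictly decreasing on the closed interval $[2k_0+1,2k_0+2]$ (the closure of an interval of $I_L$). Hence if $t_0\geq 2k_0+1$ then $\ls(2k_0+2,\beta)<\ls(t_0,\beta)$ unless $t_0=2k_0+2$, when equality holds; this settles that case. It remains to treat $t_0\in(2k_0,2k_0+1)$; then $\floor{t_0}=2k_0$, so $m:=k_0+1=p(t_0)$, $q(t_0)=t_0-k_0\in(m-1,m)$, $\hsn(t_0,\beta)=\sqrt{\beta^2-1}$ by Lemma~\ref{r:spiegaz1}(vi), and $\delta:=2k_0+1-t_0=m-q(t_0)\in(0,1)$. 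By Lemma~\ref{r:spiegaz1}(i),(ii),
\[
\ls(2k_0+2,\beta)-\ls(t_0,\beta)=A-B,\qquad A:=\int_{t_0}^{2k_0+1}\!\!\bigl(\sqrt{\beta^2-\hsn^2(s,\beta)}-1\bigr)\,ds,\quad B:=\int_{2k_0+1}^{2k_0+2}\!\!\bigl(1-\sqrt{1-\hsn^2(s,\beta)}\bigr)\,ds,
\]
with $A>0$ (on $(t_0,2k_0+1)\subset I_D$ one has $\hsn<\sqrt{\beta^2-1}$, so $\sqrt{\beta^2-\hsn^2}>1$) and $B>0$. So everything reduces to proving $A\leq B$, and $A<B$ under the standing assumption $t_0\neq 2k_0+2$.

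Next I would estimate the two sides. On $[t_0,2k_0+1]$ we have $\hsn\leq\sqrt{\beta^2-1}$, hence $\sqrt{\beta^2-\hsn^2}\geq1$, and since $p(t)\equiv m$ there, formula \eqref{f:deriv} with $p_t=0$, $q_t=1$ gives $|\hsn_t|\leq\hsn/m\leq\sqrt{\beta^2-1}/m$; therefore $\sqrt{\beta^2-1}-\hsn(2k_0+1,\beta)\leq\delta\sqrt{\beta^2-1}/m$, then $(\beta^2-1)-\hsn^2(2k_0+1,\beta)\leq2\delta(\beta^2-1)/m$, and, using $\sqrt{\beta^2-\hsn^2(s,\beta)}-1\leq\tfrac12\bigl((\beta^2-1)-\hsn^2(2k_0+1,\beta)\bigr)$ on the interval,
\[
A\leq\frac{\delta^2(\beta^2-1)}{m}.
\]
For $B$, from $1-\sqrt{1-x}\geq x/2$ and the monotonicity of $\hsn$ one gets $B\geq\tfrac12\hsn^2(2k_0+2,\beta)$, while evaluating the constraint \eqref{f:vinct} at $t=2k_0+2$ (where $p=m+1$, $q=m$), together with $\hsn(2k_0+2,\beta)<\sqrt{\beta^2-1}$, gives $\hsn(2k_0+2,\beta)\geq\sqrt{2-\beta^2}/(m+1+m\sqrt{2-\beta^2})$. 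Writing \eqref{f:vinct} at $t_0$ as $q(t_0)=(\beta^2-1)^{-1/2}-m(2-\beta^2)^{-1/2}$ and imposing $q(t_0)>m-1$ yields $\beta^2-1<(2-\beta^2)/\bigl((m-1)\sqrt{2-\beta^2}+m\bigr)^2$. Feeding all this into $A\leq B$ and abbreviating $c:=\sqrt{2-\beta^2}$, it suffices to prove the elementary inequality
\[
2\delta^2\bigl(m(1+c)+1\bigr)^2\leq m\bigl(m(1+c)-c\bigr)^2 ,
\]
which, using $\delta<1$ and $c\geq1/\sqrt2$, holds for all $m$ large enough, e.g.\ $m\geq4$, with plenty of room as $\beta\to1^+$.

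The hard part will be the finitely many remaining configurations --- small $m$, and $\beta$ near the finitely many ``transition'' values where $q(t_0)$ reaches an integer (so $\delta\to1$) --- since there the crude bounds $\delta<1$ and $c\geq1/\sqrt2$ discard too much. In those cases I would argue directly from the identity $\Lspq(p,q,1)=p\sqrt{1-\hsn^2}+q\sqrt{\beta^2-\hsn^2}+\hsn$, which follows at once from \eqref{f:lung} and \eqref{f:vincolo} and turns $\ls(2k_0+2,\beta)-\ls(t_0,\beta)\leq0$ into the explicit inequality
\[
(m+1)\bigl(\sqrt{1-\hsn^2(2k_0+2,\beta)}-1\bigr)+m\bigl(\sqrt{\beta^2-\hsn^2(2k_0+2,\beta)}-\sqrt{2-\beta^2}\bigr)+\hsn(2k_0+2,\beta)-\sqrt{\beta^2-1}\leq0 ,
\]
to be analysed using the exact relations \eqref{f:vinct} at $t_0$ and at $2k_0+2$. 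The real difficulty, and the reason the argument is long and technical, is that one is comparing the values of $\ls(\cdot,\beta)$ at two of its local minima, which are a priori close, so every estimate has to be quantitatively sharp and the transition regions demand a delicate, essentially case-by-case, treatment. Strictness when $t_0\neq2k_0+2$ is automatic, since the bound $|\hsn_t|\leq\hsn/m$ --- and hence the estimate for $A$ --- is strict.
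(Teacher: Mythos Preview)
Your reduction is correct: the case $t_0\in[2k_0+1,2k_0+2]$ is immediate, and for $t_0\in(2k_0,2k_0+1)$ the problem is exactly the inequality $A<B$ between the two integrals you wrote. Your upper bound $A\leq\delta^2(\beta^2-1)/m$ and lower bound $B\geq\tfrac12\hsn^2(2k_0+2,\beta)$ are both valid, as is the lower bound $\hsn(2k_0+2,\beta)\geq\sqrt{2-\beta^2}/(m+1+m\sqrt{2-\beta^2})$ and the constraint $\beta^2-1<(2-\beta^2)/((m-1)\sqrt{2-\beta^2}+m)^2$. The resulting elementary inequality does hold for $m\geq4$, so that part of the argument is complete.

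The gap is that you do not treat $m\in\{1,2,3\}$, i.e.\ $k_0\in\{0,1,2\}$, and these are not boundary curiosities: they cover an entire interval of $\beta$ just below $\sqrt{3/2}$. Your crude bounds lose a constant factor (from $\sqrt{1+x}-1\leq x/2$, from $|\hsn_t|\leq\hsn/m$, and from replacing $\delta$ by $1$), and for small $m$ that factor is fatal; indeed for $m=1$ your final inequality reads $2(1+c+1)^2\leq(1+c-c)^2=1$, which is absurd. The explicit identity $\Lspq(p,q,1)=p\sqrt{1-\hsn^2}+q\sqrt{\beta^2-\hsn^2}+\hsn$ you propose for the remaining cases is correct and does reduce the question to an inequality in $\hsn(2k_0+2,\beta)$ and $\beta$ alone, but you have not carried out the analysis, and it is not routine: one is comparing two nearby local minima of $\ls(\cdot,\beta)$, so naive bounds cancel.

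The paper's proof follows the same decomposition into the two integrals $A$ and $B$, but sharpens the estimates in a different way. For $k_0=0$ it computes $\ls(t_0,\beta)$ and $\ls(2,\beta)$ explicitly and compares them via a monotonicity argument in $\sn$ and $\beta$. For $k_0\geq1$ it does \emph{not} bound $|\hsn_t|$ pointwise; instead it derives Bernoulli--type differential inequalities for $\psi(t)=\sqrt{\beta^2-\hsn^2}$ on $[t_0,2k_0+1]$ and for $\chi(t)=\sqrt{1-\hsn^2}$ on $[2k_0+1,2k_0+2]$, solves the associated Cauchy problems in closed form, and integrates the resulting explicit upper bounds. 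This produces estimates tight enough to close the argument uniformly in $k_0\geq1$, at the price of a longer but purely computational verification. If you want to salvage your more elementary route, the natural fix is to replace the pointwise bound $|\hsn_t|\leq\hsn/m$ by the sharper one coming from keeping the full denominator in \eqref{f:deriv}; that is essentially what the paper's differential inequality encodes.
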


\begin{proof}
If $t_0\in[2k_0+1,2k_0+2]$, then by Lemma \ref{r:spiegaz1}(iv),(vi),
$\ls(2k_0+2,\beta) < \ls(t,\beta)$ for every $t\in [0,2k_0+2)$,
and the result is straightforward.

Let us now consider the case $t_0\in (0,1)$, so that $k_0=0$.

Recalling that $\hsn(t_0,\beta)=\sqrt{\beta^2-1}$, we obtain that
$\ls(t_0,\beta)= \sqrt{2-\beta^2}+\sqrt{\beta^2-1}-\sqrt{2}$. On the other
hand we have
\[
\begin{split}
\ls(2,\beta)&= \frac{2}{\sqrt{1-\sn_3^2}}+\frac{\beta^2}{\sqrt{\beta^2-\sn_3^2}}-2-\sqrt{2}
-\sn_3\left(\frac{2\sn_3}{\sqrt{1-\sn_3^2}}+\frac{\sn_3}{\sqrt{\beta^2-\sn_3^2}}-1\right) \\
& = 2\sqrt{1-\sn_3^2}+\sqrt{\beta^2-\sn_3^2}-2-\sqrt{2}-\sn_3\,,
\end{split}
\]
where $\sn_3=\hsn(2,\beta)$, and we have used the constraint (\ref{f:vinct}) satisfied by $\sn_3$.
Hence, denoting by
\begin{equation}
\varphi(\sn, \beta)
= 2\sqrt{1-\sn^2}+\sqrt{\beta^2-\sn^2}-2-\sn-\sqrt{2-\beta^2}-\sqrt{\beta^2-1}\,,
\end{equation}
we have to show that $\ls(2,\beta)-\ls(t_0,\beta)=\varphi(\sn_3, \beta)<0$.

One can easily check that $\varphi(\sn, \beta)$ is strictly monotone decreasing w.r.t.\ $\sn$ in $[0,1]$, so that,
by (\ref{f:sigbeta}),
we obtain $\varphi(\sn_3,\beta)<\varphi(1/\sqrt{10},\beta)$ for every $\beta \geq 1$.
In addition, $\varphi(1/\sqrt{10},\beta)$
is a strictly monotone increasing function w.r.t. $\beta$, so that we get
\[
\varphi(\sn_3, \beta)<\varphi(1/\sqrt{10},\beta)<\varphi(1/\sqrt{10},\sqrt{3/2})=\frac{5}{\sqrt{10}}-2+
\sqrt{\frac{14}{10}}-\sqrt{2}<0\,,
\]
for every $\beta \in (1, \sqrt{3/2})$, which concludes the proof for $t_0\in(0,1)$.

Assume now that $t_0\in (2k_0, 2k_0+1)$ with $k_0\geq 1$.

Since we have
\[
\begin{split}
\ls(2k_0+2, \beta) &= \ls(t_0,\beta)+\int_{t_0}^{2k_0+1}\ls_t(t,\beta)\, dt+\int_{2k_0+1}^{2k_0+2}\ls_t(t,\beta)\, dt \\
&= \int_{t_0}^{2k_0+1}\left(\sqrt{\beta^2-\hsn^2(t,\beta)}-1\right)\, dt +
\int_{2k_0+1}^{2k_0+2}\left(\sqrt{1-\hsn^2(t,\beta)}-1\right)\, dt\,,
\end{split}
\]
our aim is to prove that
\begin{equation}\label{f:mainstlo}
\int_{t_0}^{2k_0+1}\sqrt{\beta^2-\hsn^2(t,\beta)}\, dt + \int_{2k_0+1}^{2k_0+2}\sqrt{1-\hsn^2(t,\beta)}\, dt
< 2k_0+2-t_0\,.
\end{equation}
We split the proof of (\ref{f:mainstlo}) into three steps.

\smallskip
\noindent\textsl{Step 1.} Setting
\begin{equation}\label{f:logi}
\begin{split}
f & (k,t,\gamma) = 2k+1+(\beta^2-1) t+\beta^2  \\
& -\beta\sqrt{(2k+2)^2+(t+1)^2(\beta^2-1)}
-(k+1)\log\left(\frac{1+\sqrt{1+(\gamma-1)e^{-2/(k+1)}}}{\sqrt{\gamma}+1}\right)
\end{split}
\end{equation}
and $c={1}/(1-\hsn(2k_0+1,\beta)^2)$,
we show that if $f(k_0,t_0,c)\geq 0$ then (\ref{f:mainstlo}) holds.

\noindent\textsl{Step 2.} Setting
\begin{equation}\label{f:forg}
g(b)=1+3b^2-4b\sqrt{c_0}+(1-e^{-1})\,\frac{\sqrt{c_0}-1}{\sqrt{c_0}}\,,
\qquad c_0=c_0(b)=1+\frac{9}{16}(b^2-1),
\end{equation}
then $f(k_0,t_0,c)\geq g(\beta)$.

\noindent\textsl{Step 3.} $g(b)>0$ for every $b\in(1,\sqrt{3/2})$.

\smallskip
\noindent\textsl{Proof of Step 1.}
Let us consider the functions
\[
\psi(t)=\sqrt{\beta^2-\hsn^2(t,\beta)}\,,\qquad
\chi(t)=\sqrt{1-\hsn^2(t,\beta)}\,.
\]
Recalling (\ref{f:deriv}),
and taking into account that $p_t(t)=0$ and $q_t(t)=1$ for $t\in [t_0,2k_0+1)$, we obtain
\[
\psi'= -\frac{\hsn_t \hsn}{\sqrt{\beta^2-\hsn^2}}=\frac{\hsn^2}{\beta^2-\hsn^2}\cdot \frac{1}{\dfrac{p}{(1-\hsn^2)^{3/3}}
+\dfrac{\beta^2 q}{(\beta^2-\hsn^2)^{3/3}}}
<\frac{\hsn^2 \sqrt{\beta^2-\hsn^2}}{\beta^2(t+1)},
\]
where in the last inequality we have used the fact that
the function $b \mapsto b^2/(b^2-\hsn^2)^{3/2}$ is
strictly monotone decreasing,
and $p+q= t+1$. In conclusion we obtain that $\psi$ satisfies the differential inequality
\begin{equation}\label{f:odeb}
\begin{cases}
\psi'<\dfrac{1}{t+1}\psi-\dfrac{1}{\beta^2(t+1)}\,\psi^3, & t\in [t_0,2k_0+1),\\
\psi(t_0)=1.
\end{cases}
\end{equation}

Similarly, recalling that $p_t(t)=1$ and $q_t(t)=0$ for $t\in (2k_0+1,2k_0+2)$, we obtain
\[
\chi'= -\frac{\hsn_t \hsn}{\sqrt{1-\hsn^2}}=\frac{\hsn^2}{1-\hsn^2}\cdot \frac{1}{\dfrac{p}{(1-\hsn^2)^{3/3}}
+\dfrac{\beta^2 q}{(\beta^2-\hsn^2)^{3/3}}}
<\frac{\hsn^2}{p} \sqrt{1-\hsn^2},
\]
and, since $p\geq k_0+1$, we conclude that $\chi$ satisfies the differential inequality
\begin{equation}\label{f:odew}
\begin{cases}
\chi'<\dfrac{\chi-\chi^3}{k_0+1}\,, & t \in (2k_0+1,2k_0+2),\\
\chi(2k_0+1)=\sqrt{1-\beta^2+\psi^2(2k_0+1)}.
\end{cases}
\end{equation}
Solving the Cauchy problems associated to the differential inequalities (\ref{f:odeb}),
(\ref{f:odew}), we get
\begin{gather}
\psi(t)\leq\frac{1}{\sqrt{\dfrac{1}{\beta^2}+\left(1-\dfrac{1}{\beta^2}\right)\dfrac{(t_0+1)^2}{(t+1)^2}}},
\qquad t\in[t_0,2k_0+1],\label{f:dispsi}\\
\chi(t)\leq \frac{1}{\sqrt{1+\left(\dfrac{1}{1-\beta^2+\psi_1^2}-1\right)e^{-2(t-2k_0-1)/(k_0+1)}}},
\qquad t\in [2k_0+1,2k_0+2].
\end{gather}
As a consequence of these estimates we obtain
\begin{gather*}
\int_{t_0}^{2k_0+1}\sqrt{\beta^2-\sn^2(t,\beta)}\, dt
\leq \beta\left(\sqrt{(2k_0+2)^2+(t_0+1)^2(\beta^2-1)}-\beta(t_0+1)\right)\\
\int_{2k_0+1}^{2k_0+2}\sqrt{1-\sn^2(t,\beta)}\, dt \leq 1+
(k_0+1)\log\left(\frac{1+\sqrt{1+(c-1)e^{-2/(k_0+1)}}}{\sqrt{c}+1}\right)\,,
\end{gather*}
where $c={1}/(1-\beta^2+\psi(2k_0+1)^2)$, which concludes the proof of the Step 1.

\smallskip
\noindent\textsl{Proof of Step 2.}
{}From (\ref{f:dispsi}) and the fact that
$t_0>2k_0$, we have that
\[
\psi(2k_0+1) \leq \frac{\beta(2k_0+2)}{\sqrt{(2k_0+2)^2+(\beta^2-1)(t_0+1)^2}}
< \frac{\beta(2k_0+2)}{\sqrt{(2k_0+2)^2+(\beta^2-1)(2 k_0+1)^2}},\\
\]
so that
\[
\begin{split}
c= & \frac{1}{1-\beta^2+\psi(2k_0+1)^2}
\geq \frac{1+(\beta^2-1)\left(\dfrac{2k_0+1}{2k_0+2}\right)^2}{1-(\beta^2-1)^2\left(\dfrac{2k_0+1}{2k_0+2}\right)^2}
\\ & > 1+(\beta^2-1) \left(\frac{2k_0+1}{2k_0+2}\right)^2\geq 1+\frac{9}{16}(\beta^2-1)
= c_0(\beta)\,.
\end{split}
\]

Moreover, it can be easily checked that the function
\[
\gamma \mapsto\log\left(\frac{1+\sqrt{1+(\gamma-1)e^{-2/(k_0+1)}}}{\sqrt{\gamma}+1}\right)\,,\qquad \gamma>1
\]
is strictly monotone decreasing, while
the function
\[
t \mapsto (\beta^2-1) t -\beta\sqrt{(2k_0+2)^2+(t+1)^2(\beta^2-1)},\qquad t\in (2k_0, 2k_0+1)
\]
is strictly monotone increasing. Hence we have that
\[
\begin{split}
f(k_0,t_0,c) > {} &
f(k_0,2k_0,c_0(\beta)) = 2k_0\beta^2 -\beta\sqrt{(2k_0+2)^2+(2k_0+1)^2(\beta^2-1)} \\
& +1+\beta^2
- (k_0+1)\log\left(\frac{1+\sqrt{1+(c_0-1)e^{-2/(k_0+1)}}}{\sqrt{c_0}+1}\right)\,,
\end{split}
\]
where $c_0=c_0(\beta)$ is defined as in (\ref{f:forg}).

In addition, the functions
\[
\begin{split}
k  & \mapsto 2k\beta^2 -\beta\sqrt{(2k+2)^2+(2k+1)^2(\beta^2-1)}\,,\\
k & \mapsto (k+1)\log\left(\frac{1+\sqrt{1+(c_0-1)e^{-2/(k+1)}}}{\sqrt{c_0}+1}\right)\,,
\end{split}
\]
are strictly monotone increasing for $k\geq 1$.
Hence we get
\[
\begin{split}
f(k_0,t_0,c)>
1+3\beta^2-\beta\sqrt{16+9(\beta^2-1)}-
2\log\left(\frac{1+\sqrt{1+(c_0-1)e^{-1}}}{\sqrt{c_0}+1}\right)\,.
\end{split}
\]
Finally
\[
\begin{split}
& \log\left(\frac{1+\sqrt{1+(c_0-1)e^{-1}}}{\sqrt{c_0}+1}\right)\leq
\frac{\sqrt{1+(c_0-1)e^{-1}}-\sqrt{c_0}}{\sqrt{c_0}+1} \\
& = \sqrt{c_0}\, \frac{\sqrt{1+(1-e^{-1})\frac{1-c_0}{c_0}}-1}{\sqrt{c_0}+1}
\leq \frac{1}{2} (1-e^{-1})\,\frac{1-\sqrt{c_0}}{\sqrt{c_0}}\,,
\end{split}
\]
so that the proof of Step~2 is complete.

\smallskip
\noindent\textsl{Proof of Step 3.}
We have that
\[
g'(b)=6b-\frac{7+18 b^2}{\sqrt{7+9 b^2}}+\frac{36(1-e^{-1})b}{(7+9 b^2)^{3/2}}\,,
\]
and, for $1<b\leq\sqrt{3/2}$,
\[
\begin{split}
\frac{d}{db}\left(6b-\frac{7+18 b^2}{\sqrt{7+9 b^2}}\right) & =
6-27 b \frac{7+6b^2}{(7+9b^2)^{3/2}}
= 6 - 9\, \frac{3b}{\sqrt{7+9b^2}}\, \frac{7+6b^2}{7+9b^2}
\\ & \geq
6-9\, \frac{3\sqrt{3}}{\sqrt{41}}\, \frac{13}{16}>0\,,
\end{split}
\]
and
\[
\frac{d}{db}\left(\frac{b}{(7+9 b^2)^{3/2}}\right)=-16 \frac{18b^2-7}{(7+9b^2)^{3/2}}<0, \qquad b>1\,.
\]
Then we get
\[
g'(b)>6-\frac{25}{4}+\frac{72 \sqrt{3}}{41^{3/2}}(1-e^{-1})> 0,\qquad
\forall 1<b\leq \sqrt{3/2}\,.
\]
Hence $g(b) > g(1) = 0$ for all $1<b\leq \sqrt{3/2}$, and Step 3 is proved.
\end{proof}

Now we focus our attention to the study of the sequence $\ls(2k,\beta)$, $k\in \N$.
Given $k\in\N$, we set
$\delta(k,\beta)=\ls(2k+2,\beta)-\ls(2k,\beta)$, that is
\begin{equation}\label{f:delta}
\delta(k,\beta)=\lungh[\tau]{k+2}{(k+1)}-\rad[\sn]{k+1}-\radb[\sn]{\beta^2\, k}-2\,,
\end{equation}
where $\tau = \tau(k,\beta):= \hsn(2k+2,\beta)$ and $\sn = \sn(k,\beta):=\hsn(2k,\beta)$ are implicitly defined by
\begin{gather}
\vincolo[\tau]{(k+2)}{(k+1)}=1\,, \label{f:vinca} \\
\vincolo[\sn]{(k+1)}{k}=1\,. \label{f:vincb}
\end{gather}
By the monotonicity of the function $\hsn(\cdot,\beta)$ (see inequality \eqref{f:deriv})
it follows that $\tau<\sn$.

\begin{rk}\label{r:r2}
While the sign of $\ls(t,\beta)$ gives a comparison between the optical lengths
of the Snell
path $\snell(O,(t+1,1))$ and the ``light path''
$\cray{O,(1,1),(1+t,1)}$,
the sign of $\delta(k,\beta)$ gives a comparison between
the optical lengths of $\snell(O,(2k+3,1))$
and of $\snell(O,(2k+1,1))\cup \cray{(2k+1,1),(2k+3,1)}$.
\end{rk}

\begin{rk}\label{r:rdd}
Given $\beta>1$, consider the function $\tilde{\ls}\colon [0,+\infty)\to\R$,
affine on each interval $[2k,2k+2]$ and such that $\tilde{\ls}(2k) = \ls(2k,\beta)$,
$k\in\N$.
Then the derivative of $\tilde{\ls}(t)$, for $t\in (2k, 2k+2)$,
is given by $\delta(k,\beta)/2$.
\end{rk}

Since $\delta(k,\beta)=\lng(\snell(O,(2k+3,1)))-\lng(\snell(O,(2k+1,1))-2$,
and it is clear that $\lng(\snell(O,(2k+3,1)))-\lng(\snell(O,(2k+1,1))\sim \beta+1$ for $k\to +\infty$,
one expects that $\delta(k,\beta)\sim \beta-1$. A more precise result is the following.

\begin{theorem}\label{t:dmon}
Let $\beta>1$ be fixed. Then
${(\delta(k, \beta))}_k$ is a strictly monotone increasing sequence and
\begin{equation}\label{f:dbeh}
\delta(k, \beta)=(\beta-1)-\frac{\beta}{2(\beta+1)} \frac{1}{k^2}+ O\left(\frac{1}{k^3}\right)\,,
\qquad k \to +\infty\,.
\end{equation}
\end{theorem}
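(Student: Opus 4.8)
The plan is to treat the two claims---strict monotonicity of $k\mapsto\delta(k,\beta)$ and the asymptotic expansion \eqref{f:dbeh}---separately, since they require different techniques. Throughout, write $\tau=\tau(k,\beta)=\hsn(2k+2,\beta)$ and $\sn=\sn(k,\beta)=\hsn(2k,\beta)$, determined by \eqref{f:vinca}--\eqref{f:vincb}. Using Lemma~\ref{l:lpq} (or equivalently the envelope-type computation already used to simplify $\ls(2,\beta)$ in the proof of Theorem~\ref{t:t1}), $\delta(k,\beta)$ can be rewritten in the ``reduced'' form obtained by eliminating the terms carrying $\hsn_{(\cdot)}$; concretely,
\[
\delta(k,\beta) = \Big[(k{+}2)\sqrt{1-\tau^2}+(k{+}1)\sqrt{\beta^2-\tau^2}\Big]
-\Big[(k{+}1)\sqrt{1-\sn^2}+k\sqrt{\beta^2-\sn^2}\Big]-2,
\]
where the square-root combinations come from $\Lspq(p,q,h)-h\,\hsn(p,q,h)$ evaluated along the relevant Snell data. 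This is the form I would differentiate and estimate.

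For the monotonicity I would study $\delta(k+1,\beta)-\delta(k,\beta)$, or better work with a continuous interpolation: by Remark~\ref{r:rdd}, $\delta(k,\beta)/2$ is the slope of the piecewise-affine interpolant $\tilde\ls$ of the values $\ls(2k,\beta)$, so strict monotonicity of $(\delta(k,\beta))_k$ is exactly strict convexity of $k\mapsto\ls(2k,\beta)$. Lemma~\ref{r:spiegaz1} gives $\ls_t = \sqrt{1-\hsn^2}-1$ on $I_L$ and $\sqrt{\beta^2-\hsn^2}-1$ on $I_D$, with $\hsn(\cdot,\beta)$ strictly decreasing; hence on each interval $(2k,2k+2)$,
\[
\ls(2k+2,\beta)-\ls(2k,\beta)=\int_{2k}^{2k+1}\!\!\big(\sqrt{\beta^2-\hsn^2(t,\beta)}-1\big)\,dt+\int_{2k+1}^{2k+2}\!\!\big(\sqrt{1-\hsn^2(t,\beta)}-1\big)\,dt.
\]
Comparing the $k$-th and $(k{+}1)$-th such integrals, after the substitution $t\mapsto t+2$ one must show that $\hsn(t+2,\beta)<\hsn(t,\beta)$ is ``not too strong'', i.e.\ that the gain on the dark part $[2k,2k+1]$ (where $\ls_t$ is increasing in $t$ for $\beta\ge\sqrt{3/2}$, but in general not) is dominated by the change on $[2k,2k+1]\cup[2k+1,2k+2]$ taken together. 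The clean way is to show directly that $k\mapsto\hsn(2k+r,\beta)$ has a convexity/rate property: from the constraint \eqref{f:vinct}, with $p(2k+r),q(2k+r)$ both growing linearly in $k$, one gets that $\hsn$ along these integer-shift sequences behaves like $\asymp 1/k$ with controllable second differences, and feeding this into the integrals above yields the sign of the second difference of $\ls(2k,\beta)$. I expect this comparison of integrals---controlling $\hsn(t,\beta)-\hsn(t+2,\beta)$ uniformly---to be the main obstacle, and the natural tool is the explicit bound \eqref{f:sigbeta} together with the monotonicity/convexity statements in Lemma~\ref{r:spiegaz1}(iii), possibly supplemented by the differential-inequality technique (upper and now also lower Riccati bounds for $\psi,\chi$) already deployed in the proof of Theorem~\ref{t:t1}.

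For the asymptotic expansion \eqref{f:dbeh}, I would start from \eqref{f:vinca}--\eqref{f:vincb} and extract the large-$k$ behaviour of $\sn$ and $\tau$. Writing $\sn=\hsn(2k,\beta)$, the constraint $\dfrac{(k+1)\sn}{\sqrt{1-\sn^2}}+\dfrac{k\sn}{\sqrt{\beta^2-\sn^2}}=1$ forces $\sn\to0$, and expanding $\sqrt{1-\sn^2}=1-\tfrac12\sn^2+\cdots$, $\sqrt{\beta^2-\sn^2}=\beta-\tfrac{\sn^2}{2\beta}+\cdots$ gives
\[
\sn\Big((k+1)+\tfrac{k}{\beta}\Big)=1-\sn\cdot O(\sn^2 k),
\]
hence $\sn = \dfrac{\beta}{(2k+1)\beta+k}\big(1+O(1/k^2)\big)$ and similarly $\tau = \dfrac{\beta}{(2k+3)\beta+(k+1)}\big(1+O(1/k^2)\big)$; more precisely $\sn=\dfrac{1}{k}\cdot\dfrac{\beta}{2\beta+1}+O(1/k^2)$ and $\tau=\dfrac{1}{k}\cdot\dfrac{\beta}{2\beta+1}+O(1/k^2)$ with the two differing at order $1/k^2$. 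Substituting these expansions into the reduced formula for $\delta(k,\beta)$ above and expanding each square root to order $\hsn^2$ (so to order $1/k^2$), the leading terms collapse: $(k+2)+(k+1)\beta-(k+1)-k\beta-2=\beta-1$, and the $O(1/k)$ terms cancel by the defining constraints, leaving the $1/k^2$ coefficient to be computed. The bookkeeping here is routine but delicate; I would organize it by first proving the two asymptotics $\sn(k,\beta), \tau(k,\beta)=\dfrac{\beta}{(2\beta+1)k}+O(1/k^2)$ as a lemma, then plugging into
\[
\delta(k,\beta)=(\beta-1)+\Big[\tfrac{\tau^2}{2}\big((k{+}1)\tfrac1\beta-(k{+}2)\big)-\tfrac{\sn^2}{2}\big(\tfrac{k}{\beta}-(k{+}1)\big)\Big]+O(1/k^3)
\]
and reading off the coefficient $-\dfrac{\beta}{2(\beta+1)}$. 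The main risk in this part is an algebra slip in matching the $1/k^2$ coefficient, so I would double-check it against the heuristic $\delta(k,\beta)\sim(\beta-1)$ and, if convenient, against the exact value of $\delta(0,\beta)$ as a sanity anchor.
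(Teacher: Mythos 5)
Your proposal has genuine problems in both halves. For the asymptotics, the ``reduced form'' you start from is wrong: eliminating the denominators via the constraint gives $\Lspq=p\sqrt{1-\hsn^2}+q\sqrt{\beta^2-\hsn^2}+h\,\hsn$, so that
\[
\delta(k,\beta)=(k+2)\sqrt{1-\tau^2}+(k+1)\sqrt{\beta^2-\tau^2}+\tau
-(k+1)\sqrt{1-\sn^2}-k\sqrt{\beta^2-\sn^2}-\sn-2\,,
\]
and you have dropped the term $\tau-\sn$. This omission is fatal at exactly the order you must compute: expanding the square roots and using \eqref{f:vinca}--\eqref{f:vincb} to replace $\tfrac{\tau^2}{2}\bigl[(k+2)+\tfrac{k+1}{\beta}\bigr]$ by $\tfrac{\tau}{2}+O(k^{-3})$ (and similarly for $\sn$), the correct identity yields $\delta=(\beta-1)+\tfrac12(\tau-\sn)+O(k^{-3})$, while your truncated formula yields $(\beta-1)-\tfrac12(\tau-\sn)+O(k^{-3})$, i.e.\ the $1/k^2$ term with the wrong sign. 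In addition, your expansion of the constraint is miscomputed: from $\sn\bigl[(k+1)+k/\beta\bigr]\approx 1$ one gets $\sn=\beta/[(k+1)\beta+k]+\dots=\frac{\beta}{(\beta+1)k}+O(k^{-2})$, not $\beta/[(2k+1)\beta+k]$, so the leading coefficient is $\beta/(\beta+1)$, not $\beta/(2\beta+1)$; since the whole $1/k^2$ coefficient of $\delta$ is $\tfrac12 k^2(\tau-\sn)\to-\tfrac12\cdot\tfrac{\beta}{\beta+1}$ (using $\tau(k,\beta)=\sn(k+1,\beta)$), this slip alone would change the answer to $-\beta/(2(2\beta+1))$. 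Your intermediate display for $\delta$ also carries inconsistent signs on the $1/\beta$ terms. So carrying out your program as written does not produce \eqref{f:dbeh}; the strategy itself (expand the implicitly defined $\sn,\tau$ in powers of $1/k$ and substitute) is the same as the paper's, which sets $\varepsilon=1/k$, expands $\widetilde{\sn}(\varepsilon)$ by the implicit function theorem and exploits $\tau(k,\beta)=\sn(k+1,\beta)$, but your execution needs repair, not merely ``double-checking''.

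For the monotonicity you do not give a proof: you call the comparison of the shifted integrals ``the main obstacle'' and gesture at second-difference estimates for $\hsn$ and Riccati bounds, but in the route you set up there is no obstacle at all. After the change of variable $t\mapsto t+2$, $\delta(k+1,\beta)$ and $\delta(k,\beta)$ are integrals over the \emph{same} intervals of $\sqrt{\beta^2-\hsn^2(\cdot,\beta)}-1$ resp.\ $\sqrt{1-\hsn^2(\cdot,\beta)}-1$; both integrands are decreasing functions of $\hsn$, and $\hsn(t+2,\beta)<\hsn(t,\beta)$ pointwise by strict monotonicity of $\hsn(\cdot,\beta)$, so both integrands strictly increase and $\delta(k+1,\beta)>\delta(k,\beta)$ follows at once --- there is no competing ``gain on the dark part'' to dominate, and neither the sign of $\ls_t$ nor the restriction $\beta\geq\sqrt{3/2}$ enters. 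This is exactly the mechanism of the paper's proof in differential form: there $k$ is treated as a real parameter, the differentiated constraints are substituted into $\delta_k$, and one finds $\delta_k=\sqrt{1-\tau^2}-\sqrt{1-\sn^2}+\sqrt{\beta^2-\tau^2}-\sqrt{\beta^2-\sn^2}>0$ because $\tau<\sn$. Your route is viable and could be closed in two lines, but as submitted the monotonicity half is a gap and the asymptotic half is incorrect.
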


\begin{proof}
We can define $\tau$, $\sn$, and $\delta$ respectively
through (\ref{f:vinca}), (\ref{f:vincb}) and (\ref{f:delta})
as smooth functions of $k\in\R$, $k\geq 0$.
Differentiating $\delta$ w.r.t. $k$, we get
\[
\begin{split}
\delta_k(k,\beta) = {} &\rad[\tau]{1}+\radc[\tau]{(k+2)\tau_k\tau}+
\radb[\tau]{\beta^2}+\radbc[\tau]{(k+1)\beta^2\tau_k\tau} \\
& - \rad[\sn]{1}-\radc[\sn]{(k+1)\sn_k\sn}-
\radb[\sn]{\beta^2}-\radbc[\sn]{k\beta^2\sn_k\sn}\,.
\end{split}
\]
On the other hand, differentiating the constraints (\ref{f:vinca}) and (\ref{f:vincb})
we obtain the identities
\begin{gather}
\radc[\tau]{(k+2)\tau_k}+\radbc[\tau]{(k+1)\beta^2 \tau_k}=-\rad[\tau]{\tau}-\radb[\tau]{\tau}\,, \label{f:diffvinca}\\
\radc[\sn]{(k+1)\sn_k}+\radbc[\sn]{k\beta^2 \sn_k}=-\rad[\sn]{\sn}-\radb[\sn]{\sn} \,. \label{f:diffvincb}
\end{gather}
Hence, being $\tau<\sn$, we get
\[
\delta_k(k, \beta)=\sqrt{1-\tau^2}-\sqrt{1-\sn^2}+\sqrt{\beta^2-\tau^2}-\sqrt{\beta^2-\sn^2}>0\,.
\]

In order to determine the behavior of $\delta(k,\beta)$ for $k$ large, notice that, setting $\varepsilon=1/k$ and
$\widetilde{\sn}(\varepsilon)= \sn(1/\varepsilon,\beta)$, (\ref{f:vincb}) becomes
\[
\rad[\widetilde{\sn}]{(1+\varepsilon)\widetilde{\sn}}+\radb[\widetilde{\sn}]{\widetilde{\sn}}=\varepsilon\,,
\]
that is $\widetilde{\sn}$ is implicitly defined by
\[
f(\widetilde{\sn})-\varepsilon=0\,, \qquad
f(t)=\frac{\dfrac{t}{\sqrt{1-t^2}}+\dfrac{t}{\sqrt{\beta^2-t^2}}}{1-\dfrac{t}{\sqrt{1-t^2}}}\,.
\]
One has $f(0)=0$, $f'(0)= \dfrac{\beta+1}{\beta}$, $f''(0)= 2\dfrac{\beta+1}{\beta}$, so that
\[
\widetilde{\sn}(0)=0,\quad \widetilde{\sn}'(0)=\frac{\beta}{\beta+1}, \quad
\widetilde{\sn}''(0)=-2 \frac{\beta^2}{(\beta+1)^2}\,,
\]
and hence
\begin{equation}\label{f:asintotico2}
\widetilde{\sn}(\varepsilon)=\frac{\beta}{\beta+1}\varepsilon -\frac{\beta^2}{(\beta+1)^2} \varepsilon^2+
O(\varepsilon^3) \qquad \varepsilon \to 0^+\,,
\end{equation}
that is
\begin{equation}\label{f:asintotico1}
\sn(k,\beta) =
\frac{\beta}{\beta+1}\,\frac{1}{k}-
\frac{\beta^2}{(\beta+1)^2}\frac{1}{k^2}+ O\left(\frac{1}{k^3}\right)\,,
\end{equation}
and
\begin{equation}\label{f:asintotico}
\begin{aligned}
\tau(k,\beta) =
\sn(k+1,\beta)&=\widetilde{\sn}\left(\frac{1}{k+1}\right)=\widetilde{\sn}\left(\frac{1}{k}-\frac{1}{k^2}
+O\Big(\frac{1}{k^3}\Big)\right)
\\
&=\frac{\beta}{\beta+1}\,\frac{1}{k}-
\left(\frac{\beta}{\beta+1}+\frac{\beta^2}{(\beta+1)^2}\right)\frac{1}{k^2}+ O\left(\frac{1}{k^3}\right)\,.
\end{aligned}
\end{equation}
Finally we have
\begin{equation*}
\begin{aligned}
\delta(k, \beta)& = (k+2)\left(1+\frac{1}{2} \tau^2\right) +\beta(k+1) \left(1+\frac{\tau^2}{2\beta^2} \right)
-(k+1)\left(1+\frac{1}{2} \sn^2\right)
\\
&\quad- \beta k\left(1+\frac{\sn^2}{2\beta^2} \right)-2+O\left(\frac{1}{k^3}\right)
= (\beta-1)-\frac{\beta}{2(\beta+1)} \frac{1}{k^2}+ O\left(\frac{1}{k^3}\right)\,,
\end{aligned}
\end{equation*}
completing the proof.
\end{proof}

\begin{dhef}\label{d:kc}
Given $\beta>1$, we shall denote by $\kc(\beta)$ the
integer number defined by
\begin{equation}\label{f:kc}
\kc(\beta)=\min\{k\in\N\colon\ \delta(k,\beta) > 0\}\,.
\end{equation}
\end{dhef}

By the very definition, we have that $\ls(2\kc(\beta),\beta)\leq \ls(2k,\beta)$ for every $k\in \N$
(see also Remark \ref{r:rdd}).
Moreover, by Lemma~\ref{r:spiegaz1} and
Theorem~\ref{t:t1}, the absolute minimum
of $\ls(\cdot,\beta)$ is attained at a point $t=2k$, $k\in\N$.
Hence $\ls(2\kc(\beta),\beta)$ (i.e., the normalized length of the
Snell path joining the origin with the right-top vertex of the $(2\kc+1)$--th square)
minimizes
the normalized length $\ls(\cdot, \beta)$ among all the paths remaining in a single horizontal strip.

Now we want to study $\kc(\beta)$, $\beta>1$.
As a preliminary step we investigate the behavior of $\delta(k,\cdot)$ for a given $k$.

\begin{lemma}\label{l:dwrtb}
Let $k\in\N$ be fixed. Then the function $\beta\mapsto\delta(k,\beta)$ is  strictly increasing
in $[1,+\infty)$.
Moreover
$\delta(k,1) < 0$
and $\delta(k,\sqrt{2})>0$.
\end{lemma}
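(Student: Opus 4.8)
The plan is to prove the three assertions in turn, exploiting the implicit definitions \eqref{f:vinca}--\eqref{f:vincb} of $\tau$ and $\sn$ as functions of $\beta$. For the monotonicity in $\beta$, I would mimic the computation in the proof of Theorem~\ref{t:dmon}, but differentiating with respect to $\beta$ instead of $k$. Writing $\delta(k,\beta)$ as in \eqref{f:delta}, I differentiate w.r.t.\ $\beta$; the terms coming from $\tau_\beta$ and $\sn_\beta$ should cancel against contributions obtained by differentiating the constraints \eqref{f:vinca} and \eqref{f:vincb} in $\beta$, exactly as \eqref{f:diffvinca}--\eqref{f:diffvincb} were used above. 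Here, however, $\beta$ appears explicitly in the constraints and in \eqref{f:delta}, so after the cancellations one is left not with zero but with the ``explicit'' $\beta$-derivative. I expect the outcome to be an expression of the form
\[
\delta_\beta(k,\beta) = \left(\radb[\tau]{\beta(k+1)} - \radb[\sn]{\beta k}\right) - (\text{something}),
\]
and, more usefully, by the same bookkeeping as in Remark~\ref{r:thfix} and Lemma~\ref{l:lpq}, that $\delta_\beta$ reduces to a difference of two increasing functions evaluated at $\tau<\sn$. Since $\tau<\sn$ (this is recorded just after \eqref{f:vincb}), positivity will follow provided the relevant function is monotone in the right direction; identifying that function and its monotonicity is the one genuine computation, and it is the step I expect to be the main obstacle.

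For the two sign statements the arguments are explicit. For $\beta=1$ the medium is homogeneous, so the Snell path $\snell(O,(2k+1,1))$ is simply the segment $\cray{O,(2k+1,1)}$ of length $\sqrt{(2k+1)^2+1}$, and by Remark~\ref{r:r2}, $\delta(k,1)=\sqrt{(2k+3)^2+1}-\sqrt{(2k+1)^2+1}-2$. A direct estimate shows this is negative: indeed $\sqrt{(2k+3)^2+1}-\sqrt{(2k+1)^2+1} = \frac{(2k+3)^2-(2k+1)^2}{\sqrt{(2k+3)^2+1}+\sqrt{(2k+1)^2+1}} = \frac{8(k+1)}{\sqrt{(2k+3)^2+1}+\sqrt{(2k+1)^2+1}} < \frac{8(k+1)}{(2k+3)+(2k+1)} = \frac{8(k+1)}{4k+4} = 2$, hence $\delta(k,1)<0$. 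Alternatively this also follows from \eqref{f:sigbeta} together with the formula \eqref{f:defelle} and $\delta(k,\beta)=\ls(2k+2,\beta)-\ls(2k,\beta)$.

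For $\beta=\sqrt2$ I would instead use the known geodesic structure: when $\beta\ge\sqrt2$ the diagonal of a dark square is a geodesic, so the light path $\cray{O,(1,1),(2k+1,1)}$ of length $2k+\sqrt2$ is a geodesic joining $O$ to $(2k+1,1)$, i.e.\ $\lng(\snell(O,(2k+1,1)))\ge 2k+\sqrt2$ with strict inequality because the Snell path is distinct from the light path for $k\ge1$ (and for $k=0$ one checks $\delta(0,\sqrt2)>0$ directly from \eqref{f:delta}). Then $\delta(k,\sqrt2)=\lng(\snell(O,(2k+3,1)))-\lng(\snell(O,(2k+1,1)))-2$; bounding the first length below by that of the light path $2k+2+\sqrt2$ and the second above by... — more cleanly, since $\ls(0,\beta)=0$ and, for $\beta\ge\sqrt{3/2}$, Lemma~\ref{r:spiegaz1} gives that $\ls(\cdot,\beta)$ is decreasing on each interval of $I_L$ and increasing on each interval of $I_D$ with local minima at the even integers, the sequence $\ls(2k,\sqrt2)$ can be shown to be increasing by the asymptotics \eqref{f:dbeh} once it is increasing, so $\delta(k,\sqrt2)>0$ reduces to $\delta(0,\sqrt2)>0$, which is the elementary one-line check $\lungh[\tau]{2}{1}-\rad[\sn]{1}-2>0$ with $\tau=\hsn(2,\sqrt2)$, $\sn=\hsn(0,\sqrt2)=1/\sqrt2$. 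Either route works; I would present the first as it is self-contained given Remark~\ref{r:r2} and the threshold fact recalled in the introduction.
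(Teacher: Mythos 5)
Your treatment of $\delta(k,1)<0$ is correct and coincides with the paper's computation. However, the heart of the lemma --- strict monotonicity of $\beta\mapsto\delta(k,\beta)$ --- is left unproven, and the step you defer is precisely where the difficulty lies. Carrying out the differentiation and substituting the $\beta$-derivatives of the constraints \eqref{f:vinca}--\eqref{f:vincb} gives, with no extra term subtracted,
\begin{equation*}
\delta_\beta(k,\beta)\;=\;\frac{(k+1)\beta}{\sqrt{\beta^2-\tau^2}}-\frac{k\beta}{\sqrt{\beta^2-\sn^2}}\,,
\end{equation*}
and here the inequality $\tau<\sn$ works \emph{against} you: it yields $1/\sqrt{\beta^2-\tau^2}<1/\sqrt{\beta^2-\sn^2}$, so the expression is a product of a larger integer factor with a smaller radical factor versus the opposite, and positivity does not follow from the kind of ``same increasing function at $\tau<\sn$'' bookkeeping that worked in Theorem~\ref{t:dmon}. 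The paper closes this by interpolating in $k$: it defines $s(\kappa)$ for real $\kappa$ by the constraint \eqref{f:vinkap}, so that $s(k)=\sn$, $s(k+1)=\tau$, and proves that $\kappa\mapsto \kappa/\sqrt{\beta^2-s(\kappa)^2}$ is strictly increasing for $\kappa\geq 1$; this in turn requires computing $s_\kappa$ from the constraint and establishing the quantitative estimate $\beta^2-s^2+\kappa\, s\, s_\kappa>0$, using $\frac{1}{\sqrt{1-s^2}}\, \frac{s}{\kappa}+\frac{s}{\kappa\sqrt{\beta^2-s^2}}=\frac{1}{\kappa}$ to bound $s_\kappa$ from below. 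Since you explicitly flag this positivity as ``the main obstacle'' and do not supply it, the monotonicity claim is a genuine gap, not a routine verification.

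On $\delta(k,\sqrt2)>0$: your second route is the paper's argument (check $\delta(0,\sqrt2)>0$ directly using $\hsn(0,\sqrt2)=1/\sqrt2$ from \eqref{f:vincb}, then invoke monotonicity of $\delta$ in $k$), but note that what you need is the monotonicity statement of Theorem~\ref{t:dmon}, not the asymptotic expansion \eqref{f:dbeh}; as written (``can be shown to be increasing by the asymptotics once it is increasing'') the sentence is circular. The first route, which you say you would present, should be discarded: it breaks off mid-argument, and it rests on the introductory remark that light paths between integer points are geodesics for $\beta\geq\sqrt2$, a fact not proved at this stage of the paper (the rigorous version, Theorem~\ref{t:ottag}, comes later and itself depends on the present lemma through the definition of $\bc_0$), so using it here would be circular.
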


\begin{proof}
Differentiating $\delta$ w.r.t. $\beta$, we get
\begin{equation}\label{f:dwrtb1}
\begin{split}
\delta_\beta(k,\beta)& = \radc[\tau]{(k+2) \tau \tau_\beta}+(k+1)\beta\radbc[\tau]{\beta^2-2\tau^2}+
(k+1)\radbc[\tau]{\beta^2 \tau \tau_\beta} \\
& -\radc[\sn]{(k+1) \sn \sn_\beta}-k\beta\radbc[\sn]{\beta^2-2\sn^2}-
k\radbc[\sn]{\beta^2 \sn \sn_\beta}\,.
\end{split}
\end{equation}
Differentiating (\ref{f:vinca}) and (\ref{f:vincb}) w.r.t. $\beta$, we obtain
\begin{gather}
\radc[\tau]{(k+2) \tau \tau_\beta}+(k+1)\radbc[\tau]{\beta^2 \tau \tau_\beta}=
(k+1)\radbc[\tau]{\tau^2\beta} \label{f:dvin1} \\
\radc[\sn]{(k+1) \sn \sn_\beta}+ k\radbc[\sn]{\beta^2 \sn \sn_\beta}=k\radbc[\sn]{\tau^2\beta}\,. \label{f:dvin2}
\end{gather}
Substituting (\ref{f:dvin1}) and (\ref{f:dvin2}) into (\ref{f:dwrtb1}), we conclude that
\[
\delta_\beta(k,\beta)=\radb[\tau]{(k+1)\beta}-\radb[\sn]{k\beta}\,.
\]

We have to show that
\begin{equation}\label{f:dwrtb2}
\radb[\tau]{(k+1)\beta}-\radb[\sn]{k\beta}>0, \qquad \forall k\in\N.
\end{equation}
For every $\kappa\in \R$, let us denote by $s(\kappa)$ the unique function implicitly defined
by 
\begin{equation}\label{f:vinkap}
\vincolo[s]{(\kappa+1)}{\kappa}=1
\end{equation}
so that $s(k)=\sn$, $s(k+1)=\tau$. Since inequality (\ref{f:dwrtb2}) clearly holds true for $k=0$,
it is enough to show that
\[
\frac{d}{d \kappa}\left(\radb[s(\kappa)]{\kappa}\right)=
\radbc[s]{\beta^2-s^2+\kappa s s_\kappa}>0\,\qquad \forall \kappa\in\R, \ \kappa\geq 1.
\]
Differentiating (\ref{f:vinkap}) w.r.t. $\kappa$ (see also (\ref{f:diffvinca})), we get
\[
s_\kappa(\kappa)=-s \frac{\rad[s]{1}+\radb[s]{1}}{\radc[s]{\kappa+1}+\radbc[s]{\kappa\beta^2}}\,.
\]
Moreover, using again (\ref{f:vinkap}), we have
\[
\rad[s]{1}+\radb[s]{1}=\frac{1}{\kappa s}-\frac{1}{\kappa \sqrt{1-s^2}}< \frac{1}{\kappa s}\,,
\]
so that
\[
\begin{split}
\beta^2-s^2+\kappa s s_\kappa & =\beta^2-s^2-\kappa s^2 \frac{\rad[s]{1}+\radb[s]{1}}{\radc[s]{\kappa+1}+\radbc[s]{\kappa\beta^2}}\\
& >
\frac{\beta^2-s^2}{\kappa\beta^2}\left(\kappa\beta^2-s\sqrt{\beta^2-s^2}\right)>0\,,
\end{split}
\]
where the last inequality can be easily checked recalling that $0< s <1$,
while $\beta$, $\kappa\geq 1$.
Hence we conclude that inequality (\ref{f:dwrtb2})
holds true, which implies that the function $\delta(k,\beta)$ is  strictly
increasing w.r.t. $\beta$.

Taking into account that, by \eqref{f:vincb}, $\sn(0,\sqrt 2) = \sqrt 2/2$,
we easily get that $\delta(0,\sqrt 2)>0$, so that, by Theorem \ref{t:dmon},
$\delta(k, \sqrt{2})>0$.

In order to prove that
$\delta(k,1) < 0$,
we note that, from
(\ref{f:vinca}) and (\ref{f:vincb}), we get
\[
\tau(k,1)=\frac{1}{\sqrt{1+(2k+3)^2}},\qquad
\sn(k,1)=\frac{1}{\sqrt{1+(2k+1)^2}}\,,
\]
so that
\[
\delta(k,1)=\sqrt{1+(2k+3)^2}-\sqrt{1+(2k+1)^2}-2<0\,,
\]
concluding the proof.
%
%
%
\end{proof}

\begin{rk}\label{r:r3}
As an easy consequence of Lemma~\ref{l:dwrtb} we obtain that $\kc(\beta)$ is
a nonincreasing function of $\beta$.
\end{rk}

Thanks to Lemma \ref{l:dwrtb}, the following definition makes sense.

\begin{dhef}\label{d:bc}
For every $k\in\N$ we shall denote by $\bc_k$  the unique number in
$(1, \sqrt{2})$ such that
$\delta(k, \bc_k)=0$.
\end{dhef}

By Theorem~\ref{t:dmon} we have that
\[
\delta(j, \bc_k)<0, \quad \forall\ j<k, \qquad
\delta(j, \bc_k)>0, \quad \forall\ j>k,
\]
hence
\begin{equation}\label{f:minbk}
\ls(2k,\bc_k)=\ls(2k+2,\bc_k)<\ls(2j,\bc_k) \quad \forall\ j\in\N\setminus \{k,\ k+1\} \,.
\end{equation}
In particular we have that
\begin{equation}\label{f:lbcneg}
\ls(2k+2,\bc_k)=\ls(2k,\bc_k)\leq \ls(0,\bc_k)=0\,,
\end{equation}
where we have taken into account that $\ls(0,\beta)=0$, for every $\beta\geq 1$.

\begin{lemma}\label{l:bcdecr}
The sequence ${(\bc_k)}_{k\in\N}$ is strictly decreasing and
$\lim_{k\to +\infty}\bc_k=1$.
\end{lemma}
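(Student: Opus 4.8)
The plan is to exploit the two monotonicity facts already at our disposal: Theorem~\ref{t:dmon} (for fixed $\beta$, the sequence $k\mapsto\delta(k,\beta)$ is strictly increasing, with limit $\beta-1$) and Lemma~\ref{l:dwrtb} (for fixed $k$, the function $\beta\mapsto\delta(k,\beta)$ is strictly increasing). First I would establish the monotonicity of $(\bc_k)$. Fix $k\in\N$. By Definition~\ref{d:bc}, $\delta(k,\bc_k)=0$. By Theorem~\ref{t:dmon}, $\delta(k+1,\bc_k)>\delta(k,\bc_k)=0$. On the other hand, $\delta(k+1,\bc_{k+1})=0$. Since $\beta\mapsto\delta(k+1,\beta)$ is strictly increasing (Lemma~\ref{l:dwrtb}), the inequality $\delta(k+1,\bc_k)>0=\delta(k+1,\bc_{k+1})$ forces $\bc_k>\bc_{k+1}$. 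Hence $(\bc_k)$ is strictly decreasing; being bounded below by $1$ (Definition~\ref{d:bc}), it converges to some limit $\ell\in[1,\sqrt2)$.

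It remains to identify $\ell=1$. Suppose for contradiction that $\ell>1$. For each $k$ we have $\bc_k\geq\ell$, so by Lemma~\ref{l:dwrtb} (monotonicity in $\beta$) and $\delta(k,\bc_k)=0$ we get $\delta(k,\ell)\leq\delta(k,\bc_k)=0$ for every $k$. Letting $k\to+\infty$ and using the asymptotic expansion \eqref{f:dbeh} from Theorem~\ref{t:dmon}, namely $\delta(k,\ell)\to\ell-1$, we obtain $\ell-1\leq 0$, i.e.\ $\ell\leq 1$, a contradiction. Therefore $\ell=1$, which completes the proof.

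The argument is essentially a soft monotonicity-plus-limit argument, so I do not anticipate a serious obstacle; the only point requiring a little care is making sure the limit statement is applied correctly — one must note that $\delta(k,\ell)\to\ell-1$ as $k\to\infty$ by \eqref{f:dbeh}, which is legitimate since that expansion holds for any fixed $\beta>1$, in particular $\beta=\ell$. (One could alternatively bypass the asymptotics: if $\ell>1$, then since $\delta(k,\cdot)$ is increasing and $\delta(k,\sqrt2)>0$, the defining value $\bc_k$ satisfies $\bc_k<\sqrt2$, but more to the point $\delta(k,\ell)<0$ for all $k$ would contradict $\lim_k\delta(k,\ell)=\ell-1>0$ — the same computation.) Thus the heart of the proof is the clean two-variable monotonicity comparison in the first paragraph, and the limit identification is a short consequence of the already-proven asymptotic formula.
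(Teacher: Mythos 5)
Your proof is correct, and it splits naturally into two halves that compare differently with the paper. The monotonicity half is essentially the paper's own argument: the paper compares $\delta(k,\bc_{k+1})<\delta(k+1,\bc_{k+1})=0=\delta(k,\bc_k)$ and invokes the strict monotonicity in $\beta$ from Lemma~\ref{l:dwrtb}, while you make the symmetric comparison $\delta(k+1,\bc_k)>\delta(k,\bc_k)=0=\delta(k+1,\bc_{k+1})$; same two ingredients, same conclusion. For the limit, however, you take a genuinely different and softer route: monotone convergence of $(\bc_k)$ to some $\ell\in[1,\sqrt 2)$, then $\delta(k,\ell)\le\delta(k,\bc_k)=0$ for all $k$, and the asymptotics \eqref{f:dbeh} applied at the fixed value $\beta=\ell$ give $\ell-1=\lim_k\delta(k,\ell)\le 0$, forcing $\ell=1$; this is legitimate since Theorem~\ref{t:dmon} is proved before this lemma and $\ell$ is fixed, so no uniformity in $\beta$ is needed. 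The paper argues quantitatively instead: introducing the concave auxiliary function $h^k(s,\beta)=(k+1)\sqrt{1-s^2}+k\sqrt{\beta^2-s^2}-2k-\sqrt{2}+s$ from \eqref{f:deffg}, whose maximum in $s$ equals $\ls(2k,\beta)$, it obtains $\ls(2k,\beta)>(\beta-1)k+1-\sqrt{2}$, and combining this with $\ls(2k+2,\bc_k)\le 0$ from \eqref{f:lbcneg} yields the explicit bound $1<\bc_k<1+(\sqrt{2}-1)/(k+1)$ of \eqref{f:estbck}, from which the limit is immediate. Your version is shorter and avoids the auxiliary construction, but the paper's estimate \eqref{f:estbck} is not merely a means to the limit: it is reused right afterwards to prove $\bc_1<\sqrt{3/2}$, so following your route one would have to supply that bound (or an equivalent) separately.
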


\begin{proof}
Given $k\in \N$, by the monotonicity of $\delta$ w.r.t. $k$ stated in
Theorem \ref{t:dmon} we have
\[
\delta (k, \bc_{k+1})<\delta (k+1, \bc_{k+1})=0\,,
\]
where the last equality follows from the very definition of $\bc_{k+1}$.
Again, the definition of $\bc_k$ and the monotonicity of $\delta$ w.r.t. $\beta$
stated in Lemma \ref{l:dwrtb} imply that $\bc_{k+1}<\bc_k$.

In order to prove the last part of the thesis, given $k\in\N$, let us define the functions
\begin{equation}\label{f:deffg}
\begin{split}
f^k(s,\beta) & :=\lungh[s]{(k+1)}{k}-2k-\sqd\,,\\
g^k(s,\beta) & :=\vincolo[s]{(k+1)}{k}-1\,,\\
h^k(s,\beta) & :=f^k(s,\beta)-s\, g^k(s,\beta)
=(k+1)\sqrt{1-s^2}+k\sqrt{\beta^2-s^2}-2k-\sqd+s\,,
\end{split}
\end{equation}
where $s\in [0,1)$ and $\beta >1$.
Since $\hsn(2k,\beta)$ is the unique solution of $g^k(s,\beta)=0$,
we have that $h^k(\hsn(2k,\beta),\beta)=f^k(\hsn(2k,\beta),\beta)=\ls(2k,\beta)$.
Moreover $h_s^k(s,\beta)=-g^k(s,\beta)$ and $g_s^k(s,\beta)>0$,
so that $h^k(\cdot,\beta)$ is a strictly concave function in [0,1] which attains
its absolute maximum at $s=\hsn(2k,\beta)$.
Hence
\[
\ls(2k,\beta)=h^k(\hsn(2k,\beta),\beta)=\max_{s\in [0,1)}h^k(s,\beta)
> h^k(0,\beta) = (\beta-1)k+1-\sqrt{2}\,,
\quad\forall \beta>1\,.
\]
Then, from (\ref{f:lbcneg}), we have
\[
0\geq \ls(2k+2,\bc_k)> (\bc_k-1)(k+1)+1-\sqrt{2}
\]
so that
\begin{equation}\label{f:estbck}
1< \bc_k < 1+\frac{\sqd-1}{k+1}
\end{equation}
and the conclusion follows.
\end{proof}

The first values $(\bc_k)$ (up to the fifth digit)
are listed in the following table.

\smallskip
\begin{tabular}{|c||c|c|c|c|c|c|c|c|}
\hline
$k$ & 0 & 1 & 2 & 3 & 4 & 5 & 6 & 7 \\
\hline
$\bc_k$ &
1.24084 &
1.06413 &
1.02820 &
1.01577 &
1.01006 &
1.00698 &
1.00512 &
1.00392 \\
\hline
\end{tabular}

\bigskip

By Lemma \ref{r:spiegaz1}(v), for $\beta\geq \sqrt{3/2}$ the Snell path $S(O,(t,1))$, with $t\in(0,1)$
is never a geodesic. This property will be crucial for the results in Section \ref{s:geovere}.
The following result gives the position of $\sqrt{3/2}\simeq 1.22474$ w.r.t. the critical values $\bc_k$.

\begin{lemma}
$\bc_1 < \sqrt{3/2} < \bc_0$.
\end{lemma}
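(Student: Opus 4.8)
The plan is to verify the chain of inequalities $\bc_1 < \sqrt{3/2} < \bc_0$ by evaluating the sign of the function $\delta(k,\beta)$ at $\beta = \sqrt{3/2}$ for $k=0$ and $k=1$, using the characterization of $\bc_k$ as the unique zero of $\delta(k,\cdot)$ in $(1,\sqrt{2})$ together with the strict monotonicity of $\beta \mapsto \delta(k,\beta)$ from Lemma~\ref{l:dwrtb}. Concretely, since $\delta(0,\cdot)$ is strictly increasing and vanishes at $\bc_0$, the inequality $\sqrt{3/2} < \bc_0$ is equivalent to $\delta(0,\sqrt{3/2}) < 0$; similarly $\bc_1 < \sqrt{3/2}$ is equivalent to $\delta(1,\sqrt{3/2}) > 0$. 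So the whole lemma reduces to checking these two numerical inequalities.

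First I would treat $k=0$. Here $\delta(0,\beta) = L(2,1,1) - L(1,0,1) - 2$, where, with $\tau = \hsn(2,\beta)$ solving $\frac{2\tau}{\sqrt{1-\tau^2}} + \frac{\tau}{\sqrt{\beta^2-\tau^2}} = 1$ (this is \eqref{f:vinca} with $k=0$) and $\sn = \hsn(0,\beta) = 1/\sqd$, one has $\delta(0,\beta) = \frac{2}{\sqrt{1-\tau^2}} + \frac{\beta^2}{\sqrt{\beta^2-\tau^2}} - \frac{1}{\sqrt{1-\sn^2}} - \frac{\beta^2\cdot 0}{\sqrt{\beta^2-\sn^2}} - 2$. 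Using the constraint to simplify as in the proof of Theorem~\ref{t:t1} (Lemma~\ref{l:lpq}-type reduction), $L(2,1,1) = 2\sqrt{1-\tau^2} + \sqrt{\beta^2-\tau^2} - \tau$ after subtracting $\tau$ times the vanishing constraint, while $L(1,0,1) = \sqrt{1-\sn^2} \cdot 1/\sqrt{\dots}$... in fact $L(1,0,1) = \sqrt{2}$ since the Snell path $\snell(O,(1,1))$ crossing only light material of thickness $1$ is just the diagonal segment. Hence $\delta(0,\sqrt{3/2}) = 2\sqrt{1-\tau^2} + \sqrt{3/2-\tau^2} - \tau - 2 - \sqrt{2}$ with $\tau = \hsn(2,\sqrt{3/2})$. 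I would then bound $\tau$ from below and above by solving (or estimating) the constraint equation at $\beta^2 = 3/2$ and plug the estimate into the expression, which is monotone in $\tau$ on $[0,1/\sqd]$, to conclude $\delta(0,\sqrt{3/2}) < 0$. The table gives $\bc_0 \approx 1.24084 > 1.22474 \approx \sqrt{3/2}$, so this is consistent and the margin is comfortable.

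Next, for $k=1$ I would repeat the computation: $\delta(1,\sqrt{3/2}) = L(3,2,1) - L(2,1,1) - 2$, where now $\tau = \hsn(4,\sqrt{3/2})$ solves $\frac{3\tau}{\sqrt{1-\tau^2}} + \frac{2\tau}{\sqrt{\beta^2-\tau^2}} = 1$ and $\sn = \hsn(2,\sqrt{3/2})$ solves $\frac{2\sn}{\sqrt{1-\sn^2}} + \frac{\sn}{\sqrt{\beta^2-\sn^2}} = 1$; these are \eqref{f:vinca} and \eqref{f:vincb} with $k=1$. Again using the reduction, $\delta(1,\beta) = \sqrt{1-\tau^2} - \sqrt{1-\sn^2} + \sqrt{\beta^2-\tau^2} - \sqrt{\beta^2-\sn^2}$ after accounting for the $-2$ via the constraints — wait, more carefully: $\delta(1,\beta) = \big[3\sqrt{1-\tau^2} + 2\sqrt{\beta^2-\tau^2} - \tau\big] - \big[2\sqrt{1-\sn^2} + \sqrt{\beta^2-\sn^2} - \sn\big] - 2$. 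I would obtain numerical enclosures for $\sn$ and $\tau$ (both small, of order $1/3$ and $1/5$ respectively, consistent with \eqref{f:asintotico1}) by a couple of iterations of the defining equations, then substitute to show the bracketed difference exceeds $2$, i.e. $\delta(1,\sqrt{3/2}) > 0$. Since $\bc_1 \approx 1.06413$ is well below $\sqrt{3/2} \approx 1.22474$, the inequality $\delta(1,\sqrt{3/2}) > 0$ holds with a large margin.

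The only mild obstacle is that $\tau$ and $\sn$ are defined implicitly, so one must produce explicit rational (or algebraic) bounds for them rather than exact values; but because the margins in both inequalities are large (roughly $0.016$ and, for $k=1$, a gap of about $0.16$ in $\beta$), crude two-sided estimates obtained from one Newton or fixed-point step on the constraint equations, combined with the monotonicity of the relevant one-variable expressions, will suffice. I expect no genuine difficulty here — the lemma is essentially a pair of elementary numerical checks dressed up with the monotonicity statements already proved.
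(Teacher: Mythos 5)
Your reduction is sound: by Lemma~\ref{l:dwrtb} and Definition~\ref{d:bc}, the inequality $\sqrt{3/2}<\bc_0$ is equivalent to $\delta(0,\sqrt{3/2})<0$ and $\bc_1<\sqrt{3/2}$ to $\delta(1,\sqrt{3/2})>0$; the latter could indeed be settled by a rough numerical enclosure (though the paper gets it with no computation at all from the bound \eqref{f:estbck}, $\bc_1<1+\tfrac{\sqrt{2}-1}{2}<\sqrt{3/2}$). The genuine gap is in your treatment of the crucial inequality $\delta(0,\sqrt{3/2})<0$. First, the sign of $\tau$ in your reduced expression is wrong: subtracting $\tau$ times the vanishing constraint \eqref{f:vinca} gives $\delta(0,\beta)=2\sqrt{1-\tau^2}+\sqrt{\beta^2-\tau^2}+\tau-2-\sqrt{2}$, with $+\tau$ (compare $h^1$ in \eqref{f:deffg}, which carries $+s$; the display with $-\sn_3$ in the proof of Theorem~\ref{t:t1} is itself a slip). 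Proving your expression with $-\tau$ negative is weaker by $2\tau\approx 0.67$ and does not imply $\delta(0,\sqrt{3/2})<0$; the same sign issue affects your $k=1$ brackets. Second, and more seriously, the monotonicity you invoke holds only for the wrong expression: the $\tau$-derivative of the correct one is $1-\frac{2\tau}{\sqrt{1-\tau^2}}-\frac{\tau}{\sqrt{\beta^2-\tau^2}}$, which vanishes exactly at $\tau=\hsn(2,\beta)$ by the defining constraint. You are evaluating a function precisely at its interior maximum, so no monotone plug-in of an enclosure is available — this is the actual difficulty, not a routine check. Moreover the margin is thin, $\delta(0,\sqrt{3/2})\approx-0.017$ (consistent with $\bc_0\simeq 1.24084$), so after fixing the sign you would still need a certified enclosure of $\hsn(2,\sqrt{3/2})$ of total width below roughly $0.017$ (say $[0.33,0.34]$, with both endpoint sign checks of the constraint verified rigorously), not the ``crude two-sided estimates'' you anticipate.

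The paper circumvents all of this with a Lagrangian/concavity device: $h^1(s,\beta)=2\sqrt{1-s^2}+\sqrt{\beta^2-s^2}+s-2-\sqrt{2}$ satisfies $h^1_s=-g^1$, so it is concave in $s$ with maximum attained at the implicit Snell parameter, where its value equals $\ls(2,\beta)=\delta(0,\beta)$; at $\beta=\sqrt{3/2}$ the elementary bounds $2\sqrt{1-s^2}+\tfrac23 s\le\tfrac23\sqrt{10}$ and $\sqrt{3/2-s^2}+\tfrac13 s\le\sqrt{5/3}$ give $h^1(s,\sqrt{3/2})<0$ uniformly in $s$, hence $\delta(0,\sqrt{3/2})<0$ (equivalently, via $h^1_\beta>0$, $\bc_0>\sqrt{3/2}$) with no localization of the implicit parameter whatsoever. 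To repair your proposal, either adopt this device or correct the sign and replace the monotonicity step by a termwise bound of the correct expression over a sufficiently tight, rigorously verified enclosure of $\hsn(2,\sqrt{3/2})$.
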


\begin{proof}
From (\ref{f:estbck}) we have
\[
\bc_1< 1+\frac{\sqrt{2}-1}{2} <  \sqrt{\frac{3}{2}},\qquad \bc_0<\sqd\,.
\]
In order to prove the inequality $\sqrt{3/2} < \bc_0$,
let $f^1$, $g^1$, $h^1$ be the functions defined in (\ref{f:deffg}) for $k=1$,
and
let $\sn_1 \in (0,1)$ be the unique solution of $g^1(s,\bc_0)=0$.
Since
\[
0=\delta(0,\bc_0)=\ls(2,\bc_0)=f^1(\sn_1,\bc_0)\,,
\]
we have that $h^1(\sn_1,\bc_0)=0$. Moreover
\[
h^1_\beta(s,\beta)=\radb[s]{\beta}>0\,.
\]
Hence the inequality $\sqrt{3/2}<\bc_0$ can be obtained showing that
\begin{equation}\label{f:hatm}
h^1\left(s,\sqrt{\frac{3}{2}}\right)=2\sqrt{1-s^2}+\sqrt{\frac{3}{2}-s^2}-2-\sqd+s<0\,,
\qquad \forall s\in (0,1).
\end{equation}
The inequality (\ref{f:hatm}) easily follows observing that
\[
2\sqrt{1-s^2}+\frac{2}{3}s \leq \frac{2}{3}\,\sqrt{10}, \quad
\sqrt{\frac{3}{2}-s^2}+\frac{1}{3}s \leq \sqrt{\frac{5}{3}},\qquad \forall s\in (0,1)\,,
\]
so that
\[
h^1\left(s,\sqrt{\frac{3}{2}}\right)\leq\frac{2}{3}\,\sqrt{10}+\sqrt{\frac{5}{3}}-2-\sqd<0,
\qquad \forall s\in (0,1)\,,
\]
which completes the proof.
\end{proof}

\begin{figure}
\includegraphics[height=4cm]{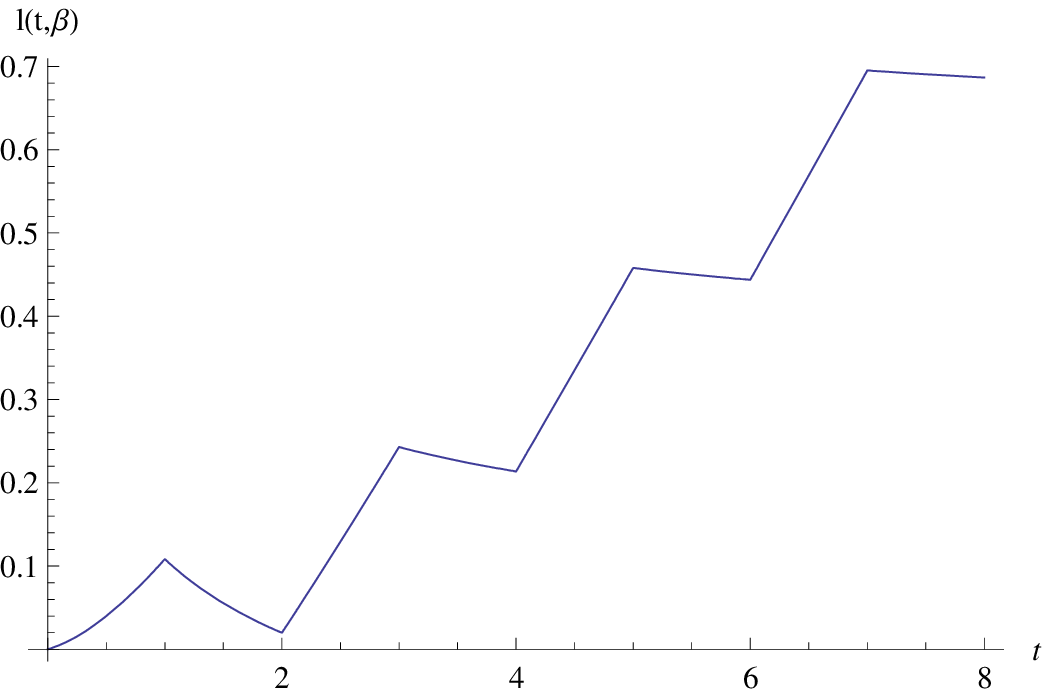} \qquad
\includegraphics[height=4cm]{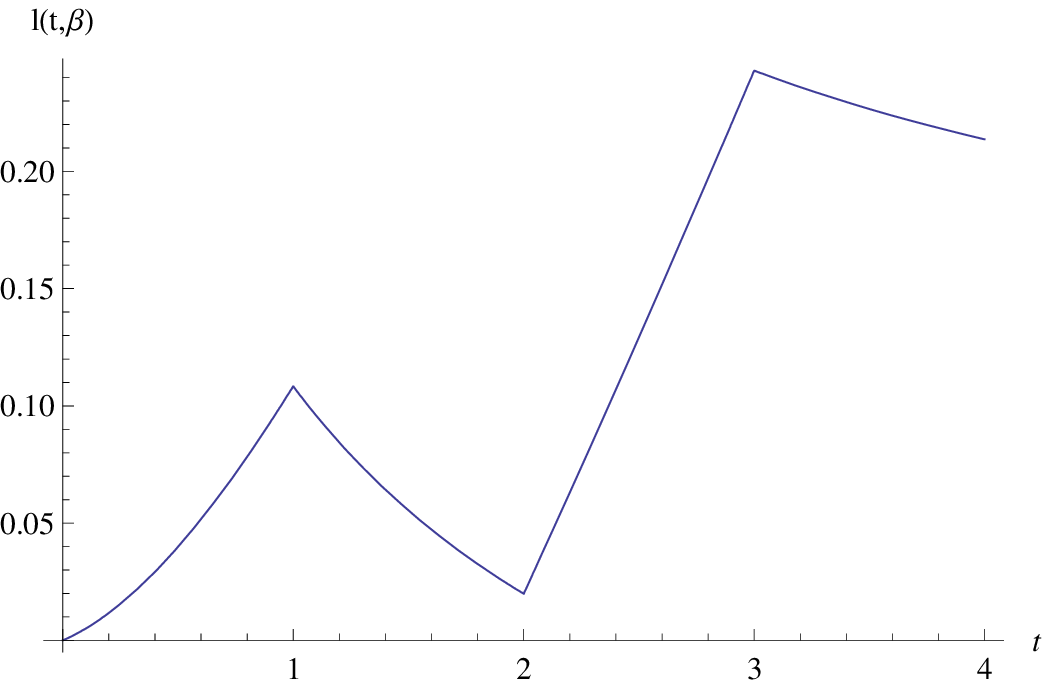}
\caption{Plot of $\ls(t,\beta)$, $\beta=1.26$ ($\beta>\bc_0$)}
\label{figp3}
\end{figure}

\begin{figure}
\includegraphics[height=4cm]{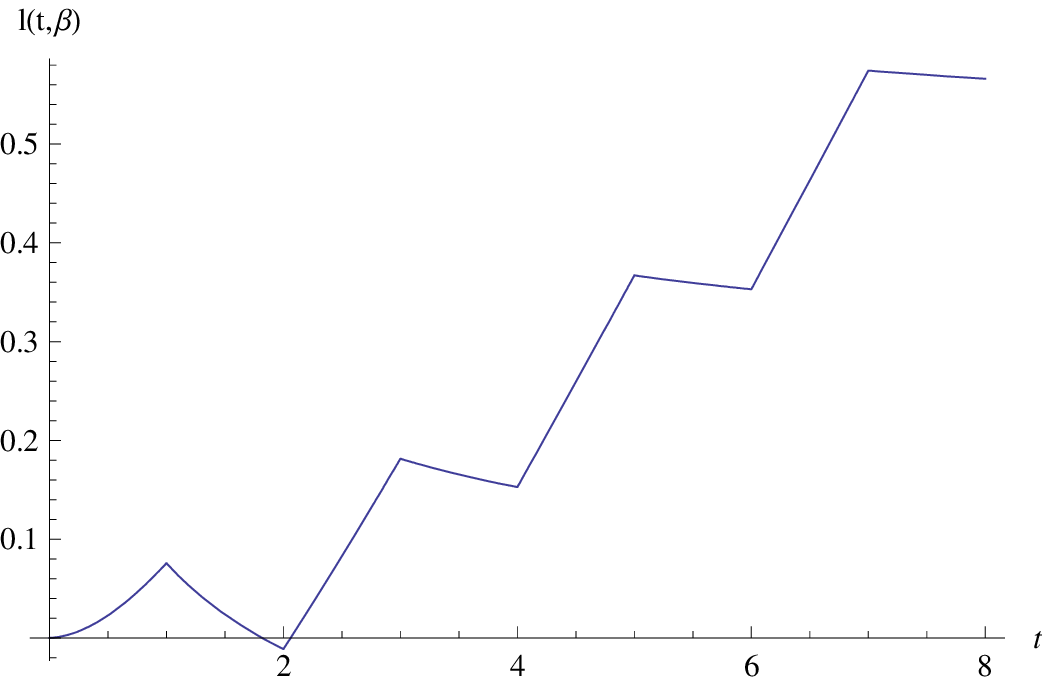} \qquad
\includegraphics[height=4cm]{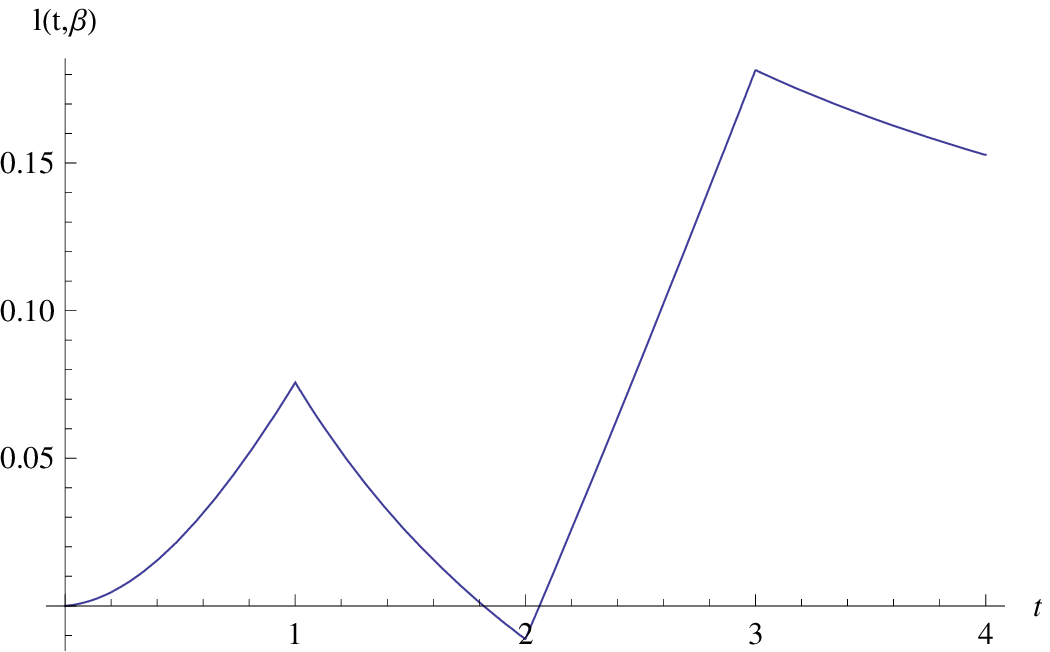}
\caption{Plot of $\ls(t,\beta)$, $\beta=1.23$ ($\sqrt{3/2}<\beta<\bc_0$)}
\label{figp2}
\end{figure}

\begin{figure}
\includegraphics[height=4cm]{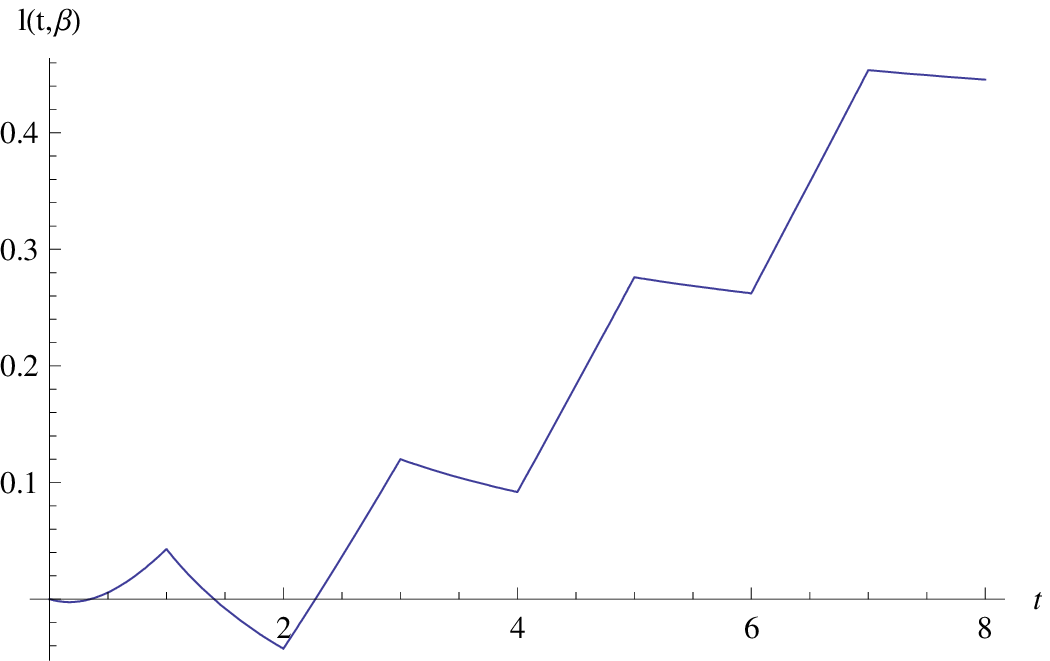} \qquad
\includegraphics[height=4cm]{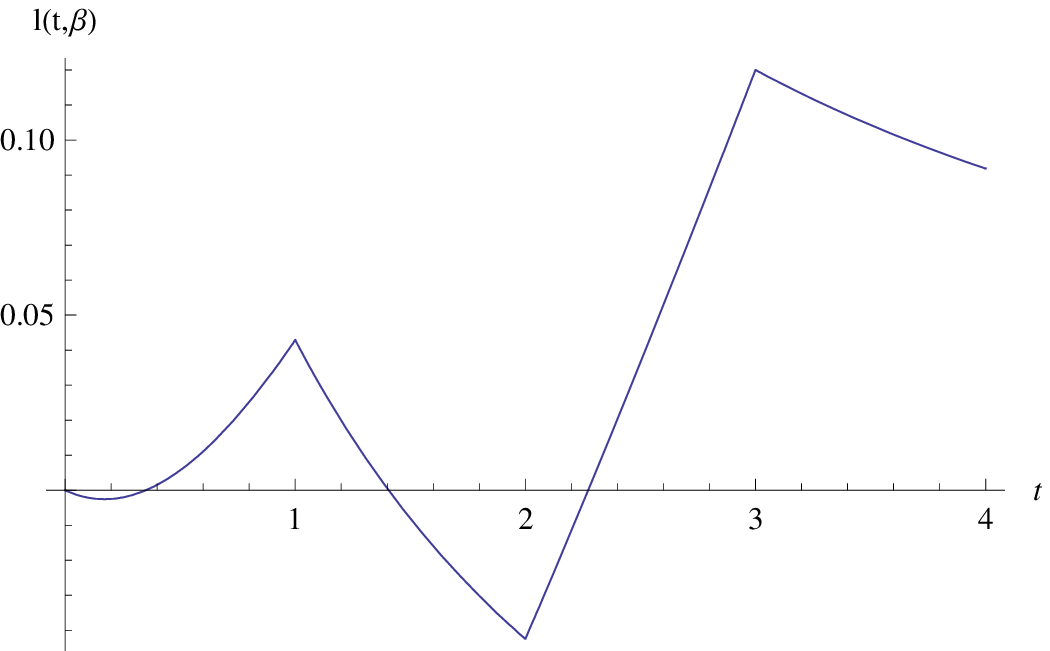}
\caption{Plot of $\ls(t, \beta)$, $\beta=1.2$ ($\bc_1<\beta<\sqrt{3/2}$)}
\label{figp1}
\end{figure}

We summarize the previous analysis in the following result, which is depicted
in Figures~\ref{figp3}--\ref{figp4}.

\begin{cor}\label{c:minb}
For every $\beta>1$
\begin{equation}\label{f:minb}
\min_{s\geq 0}\ls(s, \beta)=\min_{k\in \N}\ls(2k,\beta)<\ls(t,\beta) \qquad \forall\ t\in \bigcup_{k\in\N}(2k, 2k+2).
\end{equation}
Moreover the following hold.
\begin{itemize}
\item[(i)] $k_c(\beta)=0$ for every $\beta> \bc_0$, and $k_c(\beta)=k+1$ for every $\beta \in (\bc_{k+1},\bc_k]$, $k\in\N$;
\item[(ii)] if $\beta>\bc_0$, then $0=\ls(0,\beta)<\ls(t,\beta)$ for every $t>0$;
\item[(iii)] if $\beta \in (\bc_{k+1},\bc_k)$, $k\in\N$, then
$\ls(2k+2,\beta)<\ls(t,\beta)$ for all $t\in [0,+\infty)\setminus \{2k+2\}$ ;
\item[(iv)] $\ls(2k,\bc_k)=\ls(2k+2,\bc_k)<\ls(t, \bc_k)$ for every $k\in\N$, and
$t\in [0,+\infty)\setminus \{2k, 2k+2\}$;
\item[(v)] if $\beta \geq \sqrt{3/2}$, then $\ls(t,\beta)>0$ for $t\in (0, 1]$.
\item[(vi)]
for every $\beta>1$,
$\ls(2k_c(\beta),\beta) < \ls(t,\beta)$ for every $t>2 k_c(\beta)$.
\end{itemize}
\end{cor}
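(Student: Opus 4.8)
The plan is to assemble the corollary from the machinery already developed, mostly by bookkeeping. First I would establish \eqref{f:minb}. By Lemma \ref{r:spiegaz1}, $\ls(\cdot,\beta)$ is strictly convex on each interval of $I_L$ and of $I_D$ and strictly decreasing on every interval of $I_L$; together with continuity this forces the local minima of $\ls(\cdot,\beta)$ on $[0,+\infty)$ to occur only at the even integers $t=2k$, and hence $\min_{s\ge 0}\ls(s,\beta)=\min_{k\in\N}\ls(2k,\beta)$, with strict inequality at every non-even-integer point. Here, when $\beta<\sqrt{3/2}$, I must note that the candidate interior critical point $t_0$ of Lemma \ref{r:spiegaz1}(vi) lies in $I_D$ and is a local \emph{maximum} on that dark interval in the sense that $\ls(\cdot,\beta)$ first decreases then increases across $t_0$; combined with Theorem \ref{t:t1}, which gives $\ls(2k_0+2,\beta)\le\ls(t_0,\beta)$, the value at $t_0$ is never the absolute minimum, so the absolute minimum is still at some $t=2k$. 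That $\min_{k\in\N}\ls(2k,\beta)$ is attained at $k=k_c(\beta)$ follows from Remark \ref{r:rdd}: the piecewise-affine interpolant $\tilde\ls$ has slope $\delta(k,\beta)/2$ on $(2k,2k+2)$, and by Theorem \ref{t:dmon} the sequence $\delta(k,\beta)$ is strictly increasing, so $\tilde\ls$ is strictly decreasing up to $2k_c$ and strictly increasing afterward; since $\tilde\ls(2k)=\ls(2k,\beta)$ this pins the minimizing index at $k_c(\beta)$.

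Item (i) is immediate from Definition \ref{d:kc}, Definition \ref{d:bc}, Remark \ref{r:r3}, and the strict monotonicity of $k\mapsto\delta(k,\beta)$ (Theorem \ref{t:dmon}) together with the strict monotonicity of $\beta\mapsto\delta(k,\beta)$ (Lemma \ref{l:dwrtb}): $\delta(k,\beta)>0$ iff $\beta>\bc_k$, and $\bc_0>\bc_1>\cdots$ by Lemma \ref{l:bcdecr}, so $k_c(\beta)=0$ for $\beta>\bc_0$ and $k_c(\beta)=k+1$ precisely when $\bc_{k+1}<\beta\le\bc_k$. Items (ii) and (iii) then follow by combining (i) with the sign of $\delta$: for $\beta>\bc_0$ we have $\delta(k,\beta)>0$ for all $k\in\N$, so $\tilde\ls$ is strictly increasing from $0$ and, using the strict convexity/monotonicity from Lemma \ref{r:spiegaz1} between the nodes, $\ls(t,\beta)>\ls(0,\beta)=0$ for all $t>0$; for $\bc_{k+1}<\beta<\bc_k$ the strict inequalities $\delta(j,\beta)<0$ for $j\le k$ and $\delta(j,\beta)>0$ for $j\ge k+1$ make $2k+2$ the unique node minimizer, and \eqref{f:minb} upgrades this to strict minimality over all of $[0,+\infty)\setminus\{2k+2\}$. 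Item (iv) is the analogous statement at the endpoint $\beta=\bc_k$, where $\delta(k,\bc_k)=0$ gives the equality $\ls(2k,\bc_k)=\ls(2k+2,\bc_k)$ of \eqref{f:minbk}, while $\delta(j,\bc_k)<0$ for $j<k$ and $\delta(j,\bc_k)>0$ for $j>k$ give strict inequality elsewhere among nodes, again promoted by \eqref{f:minb}.

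Item (v) is exactly the remark already recorded after Lemma \ref{l:bcdecr}: for $\beta\ge\sqrt{3/2}$, Lemma \ref{r:spiegaz1}(v) says $\ls(\cdot,\beta)$ is strictly increasing on $(0,1)\subset I_D$, and $\ls(0,\beta)=0$, so $\ls(t,\beta)>0$ on $(0,1]$. Finally, item (vi) is a restatement of the "increasing after $2k_c$" half of the argument for \eqref{f:minb}: by Theorem \ref{t:dmon}, $\delta(j,\beta)>0$ for all $j\ge k_c(\beta)$, so $\tilde\ls$ is strictly increasing on $[2k_c(\beta),+\infty)$, and by Lemma \ref{r:spiegaz1} (strict convexity on dark intervals, strict decrease on light intervals) the function $\ls(\cdot,\beta)$ stays strictly above its value at $2k_c(\beta)$ for every $t>2k_c(\beta)$; one checks the first step $t\in(2k_c,2k_c+1)\subset I_D$ separately, using that $\ls_t(2k_c^+,\beta)=\sqrt{\beta^2-\hsn^2(2k_c,\beta)}-1$ has the right sign because $\delta(k_c,\beta)>0$ forces $\hsn(2k_c,\beta)$ small enough. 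I expect the only genuinely delicate point to be the $\beta<\sqrt{3/2}$ case of \eqref{f:minb}, where one must correctly rule out the interior critical value $t_0$ as a candidate absolute minimum; but this is precisely what Theorem \ref{t:t1} was proved for, so the remaining work is routine assembly.
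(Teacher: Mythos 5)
Your proposal is correct and takes essentially the same route as the paper: \eqref{f:minb} comes from Lemma~\ref{r:spiegaz1} together with Theorem~\ref{t:t1}, item (i) from the monotonicity of $\delta$ in $k$ (Theorem~\ref{t:dmon}) and in $\beta$ (Lemma~\ref{l:dwrtb}), items (ii)--(iv) from comparing the node values $\ls(2k,\beta)$ through the sign of $\delta$ and upgrading via \eqref{f:minb}, (v) from Lemma~\ref{r:spiegaz1}(v), and (vi) from what precedes. Two small remarks: the interior critical point $t_0$ is a local \emph{minimum} of $\ls(\cdot,\beta)$ (you describe it correctly as ``decreasing then increasing'' but call it a maximum), and in (vi) the sign of $\ls_t(2\kc^+,\beta)$, i.e.\ the inequality $\hsn(2\kc(\beta),\beta)\leq\sqrt{\beta^2-1}$, is not a direct consequence of $\delta(\kc(\beta),\beta)>0$ alone but needs Theorem~\ref{t:t1}; it is cleaner to observe, as the paper does, that (vi) already follows from \eqref{f:minb} together with the strict increase of the node values beyond $2\kc(\beta)$.
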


\begin{proof}

Formula (\ref{f:minb}) summarizes the results in Lemma \ref{r:spiegaz1} and Theorem \ref{t:t1}.

Let us now prove (i). By
Lemma \ref{l:dwrtb} and Theorem \ref{t:dmon} we have
\begin{alignat}{2}
\delta(j,\beta)\leq \delta(k,\beta)\leq\delta(k,\bc_k)=0\,, \qquad \forall j\in\N,\ j\leq k\,\ \beta\leq\bc_k,
& \label{f:disug1}
\\
\delta(j,\beta)\geq \delta(k+1,\beta)>\delta(k+1,\bc_{k+1})=0\,,
\qquad \forall j\in\N,\ j\geq k+1, \beta>\bc_{k+1} \,,
& \label{f:disug2}
\end{alignat}
and we have the strict inequality in (\ref{f:disug1}) if $\beta\neq \bc_k$.
Hence, recalling the definition of $\kc(\beta)$ given in \eqref{f:kc}, we conclude that (i) holds.

Moreover, since for every $n$, $m\in\N$, $n < m$, we have
\[
\ls(2m,\beta)=\ls(2n,\beta)+\sum_{j=n}^{m-1} \delta(j,\beta)\,,
\]
as a consequence of \eqref{f:disug1} and \eqref{f:disug2}, for every $k\in\N$ we get
\begin{gather}
0=\ls(0,\beta)<\ls(2j,\beta), \qquad \forall j\in\N, \ j\neq 0,\ \beta>\bc_0 \\
\ls(2k+2,\beta)<\ls(2j,\beta), \qquad \forall j\in\N, \ j\neq k+1,\ \beta\in (\bc_{k+1}, \bc_k)\\
 \ls(2k,\bc_k)=\ls(2k+2,\bc_k)<\ls(2j, \bc_k), \qquad \forall j\in\N, \ j\neq k, k+1,\,.
\end{gather}
Hence (ii), (iii), and (iv) follow from (\ref{f:minb}).
Finally (v) follows from Lemma~\ref{r:spiegaz1}(v),
whereas (vi) is a direct consequence of (i)--(iv).
\end{proof}

\begin{figure}
\includegraphics[height=4cm]{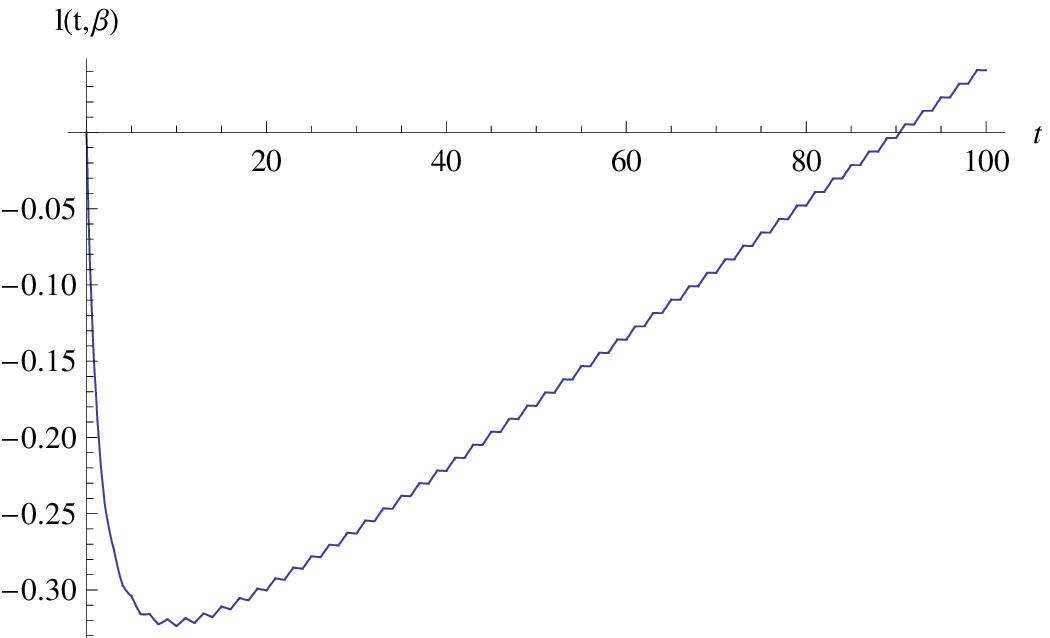} \qquad
\includegraphics[height=4cm]{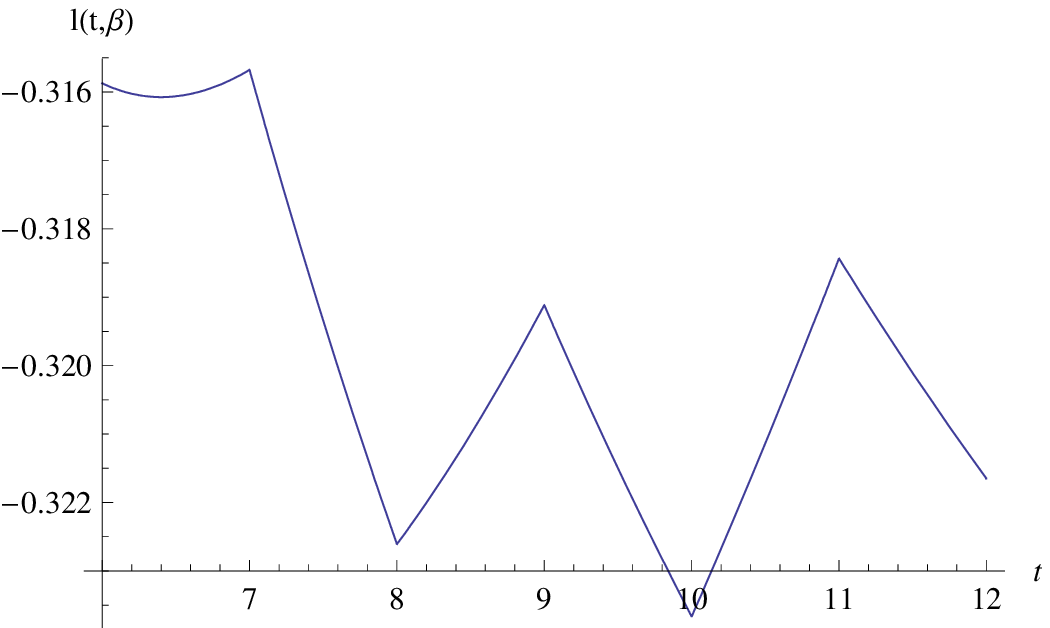}
\caption{Plot of $\ls(t, \beta)$, $\beta=1.009$ ($\bc_5<\beta<\bc_4$)}
\label{figp4}
\end{figure}

Up to now we have described the behavior of the normalized length of Snell paths starting
from a light vertex and remaining in a horizontal strip.
The following result deals with the normalized length of any Snell path remaining in a horizontal
strip.

\begin{lemma}\label{l:disnell}
Given $x\in \R$, $\tau\geq 0$, $r\in\Z$,
let $\snell(A,B)$ be the Snell path joining $A=(x,r)$ to $B=(x+\tau,r+1)$. Then
\begin{equation}\label{f:genineq}
\lng(\snell(A,B))-\tau+1-\sqrt{2}\geq \ls(2\kc(\beta),\beta)\,.
\end{equation}
Moreover
\begin{itemize}
\item[i)] if $\beta \neq \bc_{k}$ for every $k\in\N$, then the equality in (\ref{f:genineq}) holds
if and only if $\tau=2\kc(\beta)+1$, and $x=2n$, $n\in\Z$;
\item[ii)] if $\beta=\bc_{k}$ for some $k\in\N$, then the equality in (\ref{f:genineq}) holds
if and only if $\tau\in\{2\kc(\beta)-1,2\kc(\beta)+1\}$, and $x=2n$, $n\in\Z$.
\end{itemize}
\end{lemma}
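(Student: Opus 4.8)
The plan is to reduce the general Snell path $\snell(A,B)$ joining $A=(x,r)$ to $B=(x+\tau,r+1)$ to the normalized quantity $\ls(\cdot,\beta)$ studied above, by exploiting translation invariance in the vertical direction and the periodicity of the chessboard structure in the horizontal direction. Since the optical length is unchanged by vertical translation, we may assume $r=0$. The thickness of the path is $\tau$; what matters is how this thickness is split between light and dark material, which depends only on $x$ modulo $2$. The key observation is that, among all horizontal positions $x$, the one minimizing the dark thickness $q$ crossed (hence, by Remark~\ref{r:thfix}, minimizing $\lng(\snell(A,B))$ for fixed thickness $\tau$) is $x\in 2\Z$, i.e.\ starting from a light vertex on the left edge of a light square. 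Indeed, for $x=2n$ the path enters the light material first, and $q(t)$ as defined in \eqref{f:defq} is the minimal possible dark thickness for a path of thickness $\tau=t+1$ in the strip.

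\emph{First} I would make this reduction precise: for any $x$, let $\sigma_x$ be the dark thickness crossed by $\snell(A,B)$ of thickness $\tau$. One checks $\sigma_x\geq q(\tau-1)$ with equality iff $x\in 2\Z$ (this is elementary from the geometry of which squares of the strip the segment $[x,x+\tau]$ meets). By Remark~\ref{r:thfix}, $\lng(\snell(A,B))=\Lspq(\tau-\sigma_x,\sigma_x,1)$ is strictly increasing in $\sigma_x$ at fixed thickness, so
\[
\lng(\snell(A,B))-\tau+1-\sqrt2\;\geq\; \Lspq(p(\tau-1),q(\tau-1),1)-\tau+1-\sqrt2\;=\;\ls(\tau-1,\beta),
\]
with equality iff $x\in 2\Z$. \emph{Then} the problem is reduced to minimizing $\ls(s,\beta)$ over $s\geq 0$ where $s=\tau-1$, but with the added freedom that $\tau$ need not be an integer. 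However, by Corollary~\ref{c:minb}, formula \eqref{f:minb}, the minimum of $\ls(\cdot,\beta)$ over $s\geq 0$ is attained only at even integers $s=2k$, and among those, by Corollary~\ref{c:minb}(vi) together with the definition of $\kc(\beta)$, the minimum value is $\ls(2\kc(\beta),\beta)$. This immediately gives \eqref{f:genineq}, and the equality cases: equality forces both $x\in 2\Z$ and $s=\tau-1$ to be a minimizer of $\ls(\cdot,\beta)$.

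\emph{Finally}, to pin down the equality cases (i) and (ii), I would invoke the sharp form of Corollary~\ref{c:minb}: if $\beta\notin\{\bc_k\}$, then by parts (ii)--(iii) the value $\ls(2\kc(\beta),\beta)$ is attained at the unique point $s=2\kc(\beta)$, giving $\tau=2\kc(\beta)+1$; if $\beta=\bc_k$, then by part (iv) the minimum is attained exactly at the two points $s=2k$ and $s=2k+2$, i.e.\ $\tau\in\{2\kc(\beta)-1,\,2\kc(\beta)+1\}$ (recalling $\kc(\bc_k)=k+1$, so $2k=2\kc-2$ and $2k+2=2\kc$; one must double-check the indexing here — with $\kc(\bc_k)=k+1$ the two minimizing thicknesses are $2k+1=2\kc-1$ and $2k+3=2\kc+1$, matching the statement). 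Combined with the ``equality iff $x\in2\Z$'' from the reduction step, this yields both (i) and (ii).

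\emph{The main obstacle} I anticipate is the first reduction step: carefully verifying that $x\in2\Z$ strictly minimizes the dark thickness crossed by a thickness-$\tau$ path in a light-first strip, uniformly in $\tau$, and handling the borderline configurations where the path starts or ends exactly on a vertical grid line $x\in\Z$ (where the ``last segment'' degenerates and $\hsn(t,\beta)$ must be interpreted as a one-sided limit). The rest is essentially a bookkeeping translation of the already-established properties of $\ls(\cdot,\beta)$ and $\delta(k,\beta)$ into statements about $\snell(A,B)$, plus attention to the index shift between $k$ and $\kc(\beta)=k+1$ in the degenerate case $\beta=\bc_k$.
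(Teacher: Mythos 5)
Your strategy is the same as the paper's: compare $\snell(A,B)$ with the Snell path of equal thickness issuing from a light vertex via the dark--thickness monotonicity of Remark~\ref{r:thfix}, and then reduce \eqref{f:genineq} to the minimality properties of $\ls(\cdot,\beta)$ in Corollary~\ref{c:minb}; your bookkeeping of the equality cases, including the index shift $\kc(\bc_k)=k+1$, is also exactly the paper's. Two points, however, need repair. First, the claim ``$\sigma_x\geq q(\tau-1)$ with equality iff $x\in2\Z$'' is false for general $\tau$: when $\tau$ is not an odd integer the minimal dark thickness of a window of length $\tau$ is attained on a whole interval of abscissae (e.g.\ for $\tau=\tfrac52$ every $x\in[2n,2n+\tfrac12]$, $n\in\Z$, yields dark thickness $1=q(\tau-1)$), so the equivalence does not hold ``uniformly in $\tau$'' --- precisely the obstacle you anticipated. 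The cure is to order the argument as the paper does: equality in \eqref{f:genineq} first forces, through the $\ls$-step, $\tau-1$ to be a minimum point of $\ls(\cdot,\beta)$, hence $\tau$ an odd integer ($2\kc(\beta)+1$, and also $2\kc(\beta)-1$ when $\beta=\bc_k$); only for such odd $\tau$ does one then verify that the dark thickness equals $q(\tau-1)$ (equivalently, the light thickness equals its maximal value $\kc(\beta)+1$) if and only if $A$ is a light vertex, i.e.\ $x\in2\Z$. With that ordering your conclusion in (i) and (ii) is sound.

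Second, your reduction tacitly assumes $\tau\geq 1$: for $\tau\in[0,1)$ both $q(\tau-1)$ and $\ls(\tau-1,\beta)$ are undefined, yet \eqref{f:genineq} must still be proved there. This boundary case is easy but cannot be skipped: the Snell path of thickness $\tau<1$ issuing from a light vertex is the segment of length $\sqrt{1+\tau^2}$, and $\sqrt{1+\tau^2}-\tau+1-\sqrt{2}>0\geq\ls(2\kc(\beta),\beta)$, so the inequality is strict and no new equality cases arise --- which is exactly how the paper disposes of it. With these two adjustments your proof coincides with the one in the paper.
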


\begin{proof}
By Remark \ref{r:thfix} we have that
\begin{equation}\label{f:genin1}
\lng(\snell(A,B)) \geq \lng(\snell(0,C))\,,\qquad C=(\tau,1)\,,
\end{equation}
since $\snell(0,C)$ crosses a quantity of dark material not greater then the one crossed by
any other Snell path with thickness $\tau$ .
On the other hand, if $\tau\in[0,1)$, then
\[
\lng(\snell(O,C))=\sqrt{1+\tau^2}>\tau-1+\sqrt{2}\,,
\]
so that, by (\ref{f:genin1}),
\[
\lng(\snell(A,B))-\tau+1-\sqrt{2}>0\geq \ls(2\kc(\beta)+2,\beta)\,.
\]
Moreover, we stress that the equality in (\ref{f:genineq}) never occurs when $\tau\in [0,1)$.
If $\tau\geq 1$, then
\[
\lng(\snell(0,C))= \ls(\tau,\beta)+\tau+\sqrt{2}\,,
\]
so that
\begin{equation}\label{f:genin2}
\lng(\snell(A,B))-\tau+1-\sqrt{2} \geq \ls(\tau,\beta)\geq \ls(2\kc(\beta),\beta)\,.
\end{equation}
It remains to discuss the occurrence of the equality in (\ref{f:genin2}) for $\tau\geq 1$.

If $\beta \neq \bc_{k}$, $k\in\N$, then $\ls(\cdot,\beta)$ has its strict absolute minimum
point at $t=2\kc(\beta)$, so that $\ls(\tau-1,\beta)=\ls(2\kc(\beta),\beta)$ if and only if $\tau=2\kc(\beta)+1$.

Moreover, for $\tau=2\kc(\beta)+1$, the equality $\lng(\snell(A,B))=\lng(\snell(0,C))$
holds if and only if $x=2n$, $n\in \Z$. Namely, if $p_x$ denotes the thickness of light
material crossed by $\snell(A,B)$, then $p_x\leq \kc(\beta)+1=p(2\kc(\beta)+1)$ (since the total
thickness is $2\kc(\beta)+1$),
and $p_x=\kc(\beta)+1$ if and only if $A$ is a light vertex (i.e.\ $x=2n$, $n\in \Z$).

If $\beta=\bc_k$ for some $k\in\N$, then the conclusion in (ii) follows from (\ref{f:genin2}) and the
fact that the absolute minimum of $\ls(\cdot,\bc_{k})$ is attained both for $t=2\kc(\beta)-1$, and
$t=2\kc(\beta)+1$.
\end{proof}

We conclude this section by stating some properties, which will be useful in the second part of Section \ref{s:geod},
of the following generalization of the normalized length introduced in \eqref{f:defelle}. Let
$q(t)$, $t\geq 0$, be the function defined in (\ref{f:defq}).
For $0 < h \leq 1$, let $p(t,h)=t+h-q(t)$.
Given $\beta>1$, let $\tsn(t,\beta,h)$ be the unique solution of the
implicit equation
\begin{equation}\label{f:vincth}
\vincolo[\tsn]{p(t,h)}{q(t)}-h=0\,,
\end{equation}
and let us define the function
\begin{equation}\label{f:elleh}
\lh(t,\beta,h)=\lungh[\tsn(t,\beta,h)]{p(t,h)}{q(t)}-t-h\sqd\,.
\end{equation}
The function $\lh$ is the normalized length of a Snell path starting from the point
$(-h,-h)$ and ending in $(t,0)$. It is straightforward that $\tsn(t,\beta,1)$ and $\lh(t,\beta,1)$
coincide
with the functions $\hsn(t,\beta)$ and $\ls(t,\beta)$ defined in (\ref{f:vinct}) and (\ref{f:defelle})
respectively. The function
$t \mapsto \lh(t,\beta,h)$ has the same qualitative properties of $\lh(\cdot, \beta,1)$
studied at the beginning of this Section. More precisely the derivative
\begin{equation}\label{f:ellet}
\lh_t(t,\beta,h)=
\begin{cases}
\sqrt{1-\tsn^2(t,\beta,h)}-1, & \text{if\ } t\in I_L, \\
\sqrt{\beta^2-\tsn^2(t,\beta,h)}-1, & \text{if\ } t\in I_D,
\end{cases}
\end{equation}
is a monotone increasing function both in the set $I_L$ and in the set
$I_D$ (see Lemma \ref{l:lpq}).

Concerning the derivative $\lh_h$ w.r.t. $h$, we have the following result.

\begin{lemma}\label{l:elleh}
The function $h\mapsto \lh(t,\beta,h)$ is strictly convex and monotone decreasing in $(0,1]$
for every $t>0$, $\beta>1$, and
\begin{equation}\label{f:delleh}
\lh_h(t,\beta,h)= \sqrt{1-\tsn^2(t,\beta,h)}+\tsn(t,\beta,h)-\sqd.
\end{equation}
\end{lemma}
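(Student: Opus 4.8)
The plan is to mimic the method already used for $\lh_t$ and for the proof of Lemma~\ref{l:bcdecr}: introduce a Lagrange‑type function that separates the explicit dependence on $h$ from the implicit dependence through $\tsn$, and then differentiate. Concretely, write
\[
F(s,\beta,h)=\lungh[s]{p(t,h)}{q(t)}-t-h\sqd,\qquad
G(s,\beta,h)=\vincolo[s]{p(t,h)}{q(t)}-h,
\]
so that $\tsn(t,\beta,h)$ is the unique zero of $G(\cdot,\beta,h)$ and $\lh(t,\beta,h)=F(\tsn(t,\beta,h),\beta,h)$. The key algebraic fact, exactly as in Lemma~\ref{l:lpq} and Lemma~\ref{l:bcdecr}, is that $F_s(s,\beta,h)=s\,G_s(s,\beta,h)$; this is the same computation that produced ${\Lspq}_p=\sqrt{1-\hsn^2}$, now read with $p=p(t,h)$, $q=q(t)$ fixed. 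Hence
\[
\lh_h=F_h+F_s\,\tsn_h=F_h+s\,G_s\,\tsn_h,
\]
and differentiating the identity $G(\tsn,\beta,h)\equiv0$ gives $G_s\,\tsn_h=-G_h$, so the implicit term cancels and $\lh_h=F_h-s\,G_h$ evaluated at $s=\tsn$.

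Next I would compute $F_h$ and $G_h$ explicitly. Since $p_h=\partial p(t,h)/\partial h=1$ and $q$ does not depend on $h$, differentiating $F$ and $G$ (with $s$ held fixed) gives
\[
F_h=\frac{1}{\sqrt{1-s^2}}-\sqd,\qquad
G_h=\frac{s}{\sqrt{1-s^2}}-1,
\]
so that
\[
\lh_h=F_h-s\,G_h=\frac{1-s^2}{\sqrt{1-s^2}}+s-\sqd=\sqrt{1-s^2}+s-\sqd,
\]
which is formula \eqref{f:delleh} once we substitute $s=\tsn(t,\beta,h)$. This already gives the derivative formula; the remaining two assertions follow from it. For monotonicity, note $\sqrt{1-s^2}+s\le\sqd$ for all $s\in[0,1]$ with equality only at $s=1/\sqd$, and $\tsn(t,\beta,h)<\hsn(0,\beta)=1/\sqd$ for $t>0$ (here I would invoke the monotonicity of $\tsn$ in $t$, analogous to \eqref{f:deriv}, together with the fact that $\tsn$ does not increase when $h$ decreases, or simply that $\tsn<1/\sqd$ strictly for $t>0$); hence $\lh_h<0$ on $(0,1]$.

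For strict convexity in $h$ I would differentiate $\lh_h=\sqrt{1-\tsn^2}+\tsn-\sqd$ once more, obtaining
\[
\lh_{hh}=\Big(1-\frac{\tsn}{\sqrt{1-\tsn^2}}\Big)\,\tsn_h,
\]
and then determine the sign of $\tsn_h$. From $G(\tsn,\beta,h)=0$ and the expression for $G_s>0$ (which is positive, as already used for $g^k_s$ in Lemma~\ref{l:bcdecr}), the implicit function theorem gives $\tsn_h=-G_h/G_s$; since $G_h=\tsn/\sqrt{1-\tsn^2}-1<0$ for $\tsn<1/\sqd$, we get $\tsn_h>0$. On the other hand $1-\tsn/\sqrt{1-\tsn^2}>0$ precisely when $\tsn<1/\sqd$, so $\lh_{hh}>0$, i.e.\ $\lh(t,\beta,\cdot)$ is strictly convex on $(0,1]$. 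The only genuinely delicate point is justifying the strict bound $\tsn(t,\beta,h)<1/\sqd$ for $t>0$ and $0<h\le1$; I expect this to be the main obstacle, and I would handle it by the same implicit‑function/monotonicity argument that established $\hsn(t,\beta)<\hsn(0,\beta)=1/\sqd$ for $t>0$ in the discussion following \eqref{f:deriv}, noting that at $t=0$ one has $p(0,h)=h$, $q(0)=0$, whence $\tsn(0,\beta,h)=1/\sqd$ for every $h$, and that $\tsn$ is strictly decreasing in $t$.
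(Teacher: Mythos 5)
Your proposal is correct and is essentially the paper's own argument: the identity $F_s=s\,G_s$ together with the differentiated constraint reproduces exactly the cancellation the paper performs by differentiating $\lh$ directly and substituting the $h$-derivative of \eqref{f:vincth}, yielding \eqref{f:delleh}; the monotonicity and convexity then follow, as in the paper, from $\tsn(t,\beta,h)<1/\sqd$ for $t>0$ and $\tsn_h>0$. Your explicit justification of $\tsn<1/\sqd$ via $\tsn(0,\beta,h)=1/\sqd$ and strict decrease in $t$ is sound (and slightly more detailed than the paper, which merely asserts this bound).
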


\begin{proof}
Since $p_h(t,h)=1$ we have that
\begin{equation}\label{f:derh}
\lh_h(t,\beta,h)=
\frac{1}{\sqrt{1-\tsn^2}}+\frac{p\tsn\tsn_h}{(1-\tsn^2)^{3/2}}+
\frac{q\beta^2\tsn\tsn_h}{(\beta^2-\tsn^2)^{3/2}}-\sqd\,.
\end{equation}
Differentiating (\ref{f:vincth}) w.r.t. $h$, we get
\begin{equation}\label{f:deriv1}
\frac{p\,\tsn_h}{(1-\tsn^2)^{3/2}}
+\frac{\beta^2\, q\, \tsn_h}{(\beta^2-\tsn^2)^{3/2}}=1-\frac{\tsn}{\sqrt{1-\tsn^2}}\,.
\end{equation}
Substituting \eqref{f:deriv1} in (\ref{f:derh}), we obtain (\ref{f:delleh}).
It is straightforward to check that the function
$\tsn \mapsto \sqrt{1-\tsn^2}+\tsn$ is strictly increasing in $(0,1/\sqd)$.
Therefore, since $t>0$ implies $\tsn<1/\sqrt 2$, it follows that $\lh_h(t,\beta,h)<0$, for $h\in (0,1)$.
Moreover, by \eqref{f:deriv1} it follows that $\tsn_h >0$, so that
the function $h\mapsto \tsn(t,\beta,h)$ is strictly increasing in $(0,1)$ for every $t>0$ and $\beta>1$.
Hence, $\lh_h$ is strictly increasing for $h\in(0,1)$, which implies that $\lh(t,\beta,h)$ is
strictly convex w.r.t. $h$.
\end{proof}

\section{General properties of the geodesics in the chessboard structure}\label{s:geod}

\begin{figure}
\includegraphics[height=4cm]{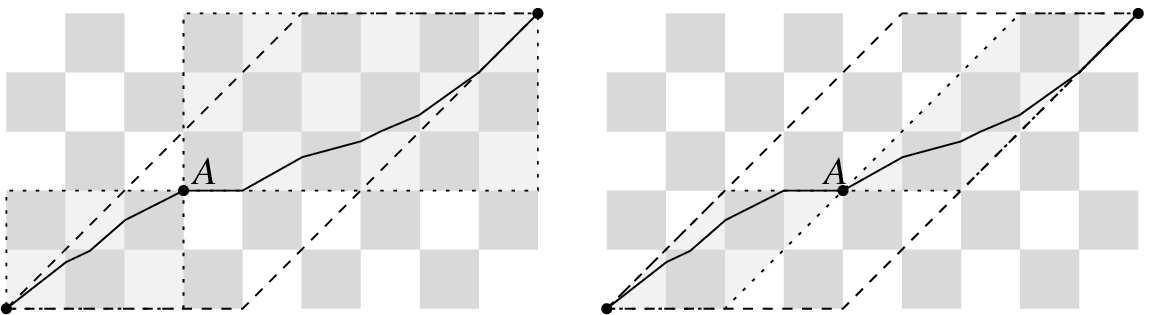}
\caption{}
\label{figd2}
\end{figure}

Up to now we have investigated the properties of a geodesic joining two points on the
sides of one horizontal strip in the chessboard structure. In this section we will
study the properties of a geodesic starting from the origin $O$, crossing
an arbitrary large number of horizontal strips, and ending in a light vertex
$(2n+j,j)$, $n,j\in\N$. If $n=0$ or $j=0$ , then the unique geodesic from the origin to
the point $(2n+j,j)$ is the segment joining the two points. Hence we shall further assume
that $j,n\geq 1$.

Throughout this section we shall assume that
\begin{equation}\label{f:gamma}
\text{$\Gamma$ is a geodesic from the origin to the point $(2n+j,j)$, $n,j\in\N$, $n,j\geq 1$}.
\end{equation}


The basic properties of $\Gamma$ are listed in the following two propositions.

\begin{prop}\label{p:bounds}
Let $\Gamma$ be as in (\ref{f:gamma}). Then the following properties hold.
\begin{itemize}
\item[(i)] Let $H$ be a closed half plane such that $\partial H$ is
either the line $x=k$, or  $y=k$, or a light diagonal $D_k$, for some $k\in\Z$.
Let $\widetilde{\Gamma}\subseteq \Gamma$
be a path, with endpoints $\widetilde{A}$, $\widetilde{B}$, such that $\widetilde{\Gamma}\subseteq H$
and $\widetilde{A}$, $\widetilde{B}\in\partial H$. Then $\widetilde{\Gamma}=\cray{\widetilde{A},\widetilde{B}}$.
\item[(ii)]  Let $A=(x_A,y_A)\in \Gamma$ and let $\Gamma^-$, $\Gamma^+ \subseteq \Gamma$ be the two paths
joining $O$ to $A$ and $A$ to $(2n+j,j)$ respectively. Then the following bounds hold:
\begin{itemize}
\item if $A \in \Z \times \Z$ then $\Gamma^-\subseteq [0,x_A]\times [0,y_A]$ and
$\Gamma^+\subseteq [x_A, 2n+j]\times [y_A,j]$;
\item if in addition $A$ is a light vertex, then
\[
\begin{split}
\Gamma^-  & \subseteq \{ (x,y)\in \R^2 \colon\ 0\leq y \leq y_A,\ y \leq x \leq y-y_A+x_A\}, \\
\Gamma^+  & \subseteq \{ (x,y)\in \R^2 \colon\ y_A\leq y \leq j,\ y-y_A+x_A \leq x \leq y+2n\}
\end{split}
\]
(see Figure \ref{figd2}).
\end{itemize}
\item[(iii)] Let $Q$ be the interior of a light or a dark square, and assume that
$\Gamma\cap Q\neq \emptyset$. Then $\Gamma\cap Q$ is a segment.
As a consequence, $\Gamma = \cup_{i=1}^N \cray{P_{i-1}, P_i}$, where
$P_0=(0,0)$, $P_N = (2n+j,j)$, $P_i\neq P_k$ for $i \neq k$, and $P_i$ belongs to the boundary
of a square for every $i=0,\ldots, N$.
\item[(iv)] Let $\theta_i$ denote the oriented angle between the horizontal axis and $\cray{P_{i-1}, P_i}$,
$i=1,\ldots, N$. Then $\theta_i\in [0, \pi/2]$. Moreover, if either $P_{i-1}$ or $P_i$ is a light
vertex, then $\theta_i\in[0,\pi/4]$.
\end{itemize}
\end{prop}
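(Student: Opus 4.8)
The plan is to reduce everything to part (i), which is elementary. For (i), recall that any portion of a length--minimizing curve is itself length--minimizing between its own endpoints (otherwise one could splice in a shorter connection and shorten $\Gamma$), so $\widetilde\Gamma$ minimizes the length between $\widetilde A$ and $\widetilde B$. Compare it with the straight segment $\cray{\widetilde A,\widetilde B}$, which lies entirely on the line $\partial H$: since in the definition of $a_\beta$ the squares are open, one has $a_\beta\equiv 1$ on every integer vertical or horizontal line and on every light diagonal, hence $\lng(\cray{\widetilde A,\widetilde B})$ equals the Euclidean distance $|\widetilde A-\widetilde B|$. On the other hand $a_\beta\geq 1$ forces $\lng(\widetilde\Gamma)\geq\mathrm{length}_{\mathrm{eucl}}(\widetilde\Gamma)\geq|\widetilde A-\widetilde B|$. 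Minimality makes the three equal, so $\widetilde\Gamma$ has Euclidean length $|\widetilde A-\widetilde B|$ and therefore coincides with $\cray{\widetilde A,\widetilde B}$.

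Everything else is obtained by ``trapping'' $\Gamma$ between admissible lines, via the following extraction principle: if $\Gamma$ leaves a closed half--plane $H$ whose boundary is admissible and whose boundary line is not left by $\Gamma$ at the two relevant endpoints, then by the intermediate value theorem one isolates a sub--arc of $\Gamma$ contained in $\mathbb R^2\setminus(\mathrm{int}\,H)$ with both endpoints on $\partial H$; by (i) that sub--arc is a segment on $\partial H$, contradicting that it leaves $H$. First, $\Gamma$ is simple (a loop could be removed to shorten it), so it is split by any of its points into two well--defined sub--arcs. Using the six half--planes with boundaries $x=0$, $x=2n+j$, $y=0$, $y=j$, $D_0$, $D_n$ (all admissible, all through $O$ or through $(2n+j,j)$), the extraction principle gives $\Gamma\subseteq R:=\{0\le x\le 2n+j,\ 0\le y\le j,\ x-2n\le y\le x\}$. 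Applying it to the integer lines $x=x_A$, $y=y_A$ after splitting $\Gamma$ at $A$ yields the first inclusions in (ii); if moreover $A\in D_m$ — necessarily with $0\le m\le n$, by $\Gamma\subseteq R$ — applying it to $D_m$ yields the two extra diagonal constraints, completing (ii).

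To obtain (iii) and (iv) I would first prove that $u_1$ and $u_2$ are nondecreasing along $\Gamma$. By (i), for each integer $m$ with $0<m<2n+j$ the set $\{t:\ u_1(t)>m\}$ can have no connected component $(a,b)$ with $b<T$ (such a component would give a sub--arc of $\Gamma$ in $\{x\ge m\}$ with endpoints on $x=m$, hence forced onto $x=m$, contradicting $u_1>m$ on $(a,b)$); being nonempty it is therefore a single final interval, so $\Gamma$ meets $x=m$ ``once'', and consequently $\Gamma$ crosses each closed strip $\{m\le x\le m+1\}$ along a single sub--arc, from $\{x=m\}$ to $\{x=m+1\}$. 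Inside that strip the chessboard structure reduces to a layered structure with horizontal layers, so the sub--arc minimizes that layered length among curves in the strip; but the corresponding Snell path of Section~\ref{s:lay} is unique and monotone, hence stays inside the strip, so it is also such a minimizer and the two coincide. Thus $\Gamma$ is, inside each vertical strip, a monotone piecewise--affine Snell path; gluing over the strips gives $u_1$ nondecreasing, and the symmetric argument with horizontal strips gives $u_2$ nondecreasing. Then (iii) follows: inside a square $Q=(m,m+1)\times(q,q+1)$ the curve $\Gamma$ is the affine piece of that Snell path lying in the layer $\{q\le y\le q+1\}$, i.e.\ a single segment; and being monotone inside the bounded set $R$, $\Gamma$ meets only finitely many squares, so it is a finite polygonal line with vertices on square boundaries. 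Finally, in (iv) monotonicity of $(u_1,u_2)$ forces every $P_i-P_{i-1}$ into the closed first quadrant, whence $\theta_i\in[0,\pi/2]$; and when $P_{i-1}$ (resp.\ $P_i$) is a light vertex, the diagonal inclusion of (ii) applied to $\Gamma^+$ (resp.\ $\Gamma^-$) forces the $y$--component of $P_i-P_{i-1}$ not to exceed its $x$--component, i.e.\ $\theta_i\in[0,\pi/4]$.

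The step I expect to be most delicate is the monotonicity argument, precisely the identification of $\Gamma$ restricted to a vertical strip with a Snell path: one must verify that $\Gamma$ enters and leaves the strip exactly once (this is exactly the ``single final interval'' statement, ensuring the restriction is one genuine sub--arc, not a union of pieces), and that the layered geodesic joining the two entry/exit points does not escape the strip — which is why the monotonicity of Snell paths recorded in Section~\ref{s:lay} must be invoked before uniqueness can be used to conclude equality. I also expect a direct shortening proof of (iii) to be awkward for dark squares, because $a_\beta$ is smaller on $\partial Q$ than inside $Q$; routing (iii) through the Snell--path description of each strip avoids this. Once (i) and the monotonicity are in place, the rest is bookkeeping.
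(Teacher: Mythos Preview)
Your treatment of (i) and (ii) matches the paper's: the same reflection/trapping argument with half--planes bounded by the lines $x=k$, $y=k$, $D_k$.

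For (iii) and (iv) you take a genuinely different route. The paper dispatches (iii) by citation (``a necessary condition for minimality, see [AcBu, Section~IV]''), and then proves (iv) by induction on $i$: since $P_0$ is a light vertex, (ii) gives $\theta_1\in[0,\pi/4]$; at each subsequent $P_{i-1}$ either $P_{i-1}\in\Z\times\Z$ and (ii) applies again, or $P_{i-1}$ lies in the interior of a side and Snell's law preserves the quadrant of the direction. In particular the paper's order is (iii) $\Rightarrow$ piecewise linearity $\Rightarrow$ (iv) via Snell.

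Your order is reversed: you first prove global monotonicity of both coordinates by showing that the restriction of $\Gamma$ to each unit vertical (or horizontal) strip coincides with the Snell path of the corresponding layered structure, then read off (iii) (one segment per square) and (iv) (first quadrant, plus the diagonal bound from (ii)). This is correct. The comparison step is the one you flagged: with $\gamma$ the sub--arc of $\Gamma$ in a vertical strip and $S$ the layered Snell path between its endpoints, $S$ is monotone in $x$ by Snell's law at horizontal interfaces (the tangential component keeps its sign), hence $S$ stays in the strip, so $\lng(S)$ equals its layered length; since $\gamma$ also stays in the strip, its chessboard and layered lengths agree, and uniqueness of the layered geodesic forces $\gamma=S$. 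One small correction of phrasing: $\gamma$ minimizes the chessboard length among \emph{all} competitors, not only among curves in the strip; this stronger fact is what lets you compare with $S$ once you know $S$ remains in the strip.

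What each approach buys: the paper's is shorter but outsources (iii); yours is self--contained and yields monotonicity of both coordinates directly, which is in fact implicitly used later (e.g.\ in Proposition~\ref{p:akbk}). Your observation that a direct shortening argument for (iii) is awkward in dark squares (because $a_\beta$ drops on $\partial Q$) is exactly why the paper defers to the literature there; routing through the strip--wise Snell description sidesteps the issue cleanly.
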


\begin{proof}
Property (i) is a straightforward consequence of the local minimality of $\Gamma$ and the fact that a segment
$\cray{\widetilde{A},\widetilde{B}}$ contained in the lines $x=k$, or  $y=k$, or $D_k$, for some $k\in\Z$ is the unique
geodesic from $\widetilde{A}$ to $\widetilde{B}$.

In order to prove (ii), we first observe that
\[
\Gamma  \subseteq \mathcal{W}\doteq \{ (x,y)\in \R^2 \colon\ 0\leq y \leq j,\ y \leq x \leq 2n+y\}\,.
\]
Namely, if this is not the case, there exists an open half plane $H$ such that $H\cap \mathcal{W} = \emptyset$,
$H \cap \Gamma \neq \emptyset$, and $\partial H$ is
one of the lines $y=0$, $y=j$, $D_0$, $D_n$, a contradiction with (i).
Then, if  $A\in \Gamma \subseteq \mathcal{W}$ has the stated requirements, then the bounds in (ii)
can be obtained reasoning as above with half planes with boundary given by
a line of the type $x=x_A$, or  $y=y_A$, or $D_{(x_A-y_A)/2}$.

Property (iii) is a necessary condition for minimality, see, e.g., \cite[Section IV]{AcBu}.

In order to prove (iv), we notice that, by (ii), $\theta_i\in [0,\pi/2]$ whenever  either $P_{i-1}$ or $P_i$ is in $\Z \times \Z$,
and  $\theta_i\in[0,\pi/4]$ if either $P_{i-1}$ or $P_i$ is a light vertex. In particular $\theta_1$, $\theta_N \in [0, \pi/4]$.
Hence we have only to show that if $\theta_{i-1}\in [0, \pi/2]$ and $P_{i-1} \not\in \Z \times \Z$, then $\theta_{i}\in [0, \pi/2]$.
This follows from the fact that $P_{i-1}$ is in the interior of a side of a square, so that the Snell's law (\ref{f:snell}) holds.
\end{proof}

\begin{rk}\label{r:seguno}
Since ${a_{\beta}}=1$ on the boundary of the squares, then Proposition \ref{p:bounds}(iii) can be improved
observing that the intersection of $\Gamma$ with the closure of a light
square is a segment.
\end{rk}

In what follows we will be interested in the intersections $\Gamma \cap D_k$, $k=1,\ldots, n$.

\begin{prop}\label{p:akbk}
Let $\Gamma$ be as in (\ref{f:gamma}), and let $P_i$, $i=0,\ldots,N$ be as in Proposition \ref{p:bounds}(iii).
Then the following properties hold.
\begin{itemize}
\item[(i)] For every $k=0,\ldots,n$ there exist $0\leq \eta_k \leq \zeta_k \leq j$ such that
$\Gamma\cap D_k=\cray{A_k,B_k}$, $A_k=(2k+\eta_k,\eta_k)$, $B_k=(2k+\zeta_k,\zeta_k)$.
Moreover $A_0=(0,0)=P_0$, $B_n=(2n+j,j)=P_N$, and $\zeta_k\leq \eta_{k+1}$ for every $k=0,\ldots,n-1$.
\item[(ii)] If $A_k \neq B_k$ then $\eta_k$ and $\zeta_k$ are integers (that is $A_k$ and $B_k$
are light vertices).
\item[(iii)] Given $k=1,\ldots,n$, let $i=1,\ldots N$ be such that $A_k\in \ocray{P_{i-1}, P_i}$.
Then $0\leq \theta_i < \pi/4$. If in addition
$A_k$ is not a light vertex, then $\theta_i\neq 0$.
The same properties hold if $B_k\in \coray{P_{i-1}, P_i}$.
\end{itemize}
\end{prop}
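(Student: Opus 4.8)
The plan is to exploit the convexity constraints from Proposition \ref{p:bounds}(ii) and (iv) to control how $\Gamma$ meets each light diagonal $D_k$. First I would establish (i). Since $\Gamma\cap D_k$ is the intersection of the curve with the line $y=x-2k$, Proposition \ref{p:bounds}(i) applied to the two closed half-planes bounded by $D_k$ shows that $\Gamma\cap D_k$ must be a (possibly degenerate) segment: if it had two connected components, or a component not lying in the line, we could find a half-plane with boundary $D_k$ contradicting local minimality. So write $\Gamma\cap D_k=\cray{A_k,B_k}$ with $A_k=(2k+\eta_k,\eta_k)$, $B_k=(2k+\zeta_k,\zeta_k)$, $\eta_k\le\zeta_k$; that $0\le\eta_k$ and $\zeta_k\le j$ follows from $\Gamma\subseteq\mathcal W$ in the proof of Proposition \ref{p:bounds}(ii). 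The identifications $A_0=P_0$, $B_n=P_N$ are immediate since $O\in D_0$, $(2n+j,j)\in D_n$, and these are the extreme points of $\Gamma$ in the respective diagonals. For the monotonicity $\zeta_k\le\eta_{k+1}$, I would apply Proposition \ref{p:bounds}(ii) at the light vertex $B_k$ (or at $A_k$ if $B_k$ is not a vertex — see below): the portion $\Gamma^+$ after $B_k$ is confined to $\{y-y_{B_k}+x_{B_k}\le x\le y+2n\}$, i.e.\ to the slab between $D_k$ and $D_n$, hence it cannot return to $D_k$ and must cross $D_{k+1}$ at a point with ordinate $\ge\zeta_k$; but that point is $A_{k+1}$.

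Next I would prove (ii). Suppose $A_k\neq B_k$. Then $\ocray{A_k,B_k}$ is a nondegenerate segment on $D_k$, hence it lies on the common boundary of two squares; by Remark \ref{r:seguno} the intersection of $\Gamma$ with the closure of each adjacent light square is a segment, and more to the point $\Gamma$ coincides with $\cray{A_k,B_k}$ on an open subsegment of $D_k$. The endpoints $A_k,B_k$ of this maximal diagonal subsegment must be points $P_i$ of the polygonal decomposition (a change of direction can only occur on the boundary of a square, and a point of $D_k$ in the interior of a side of a square where the incoming and outgoing directions differ would violate Snell's law, since $a_\beta=1$ on that side means no refraction there unless the side is itself part of $D_k$). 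Actually the cleanest route: if $A_k\ne B_k$, then $A_k$ and $B_k$ are vertices of the lattice of squares — a diagonal segment of positive length on $D_k$ can only terminate, within $\Gamma$, at a lattice point, because at any non-lattice point of $D_k$ the curve either continues along $D_k$ (by local minimality, the diagonal being the unique geodesic between two of its points inside a dark square for $\beta\le\sqrt2$, or trivially for a light square) or would have to refract at a side carrying $a_\beta=1$, which forces the direction to be unchanged. Hence $\eta_k,\zeta_k\in\Z$ and $A_k,B_k$ are light vertices.

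Finally, (iii). Fix $k\in\{1,\dots,n\}$ and let $A_k\in\ocray{P_{i-1},P_i}$. By Proposition \ref{p:bounds}(iv), $\theta_i\in[0,\pi/2]$, and $\theta_i\in[0,\pi/4]$ if $P_{i-1}$ or $P_i$ is a light vertex. The key point is that $\theta_i<\pi/4$ strictly: the segment $\cray{P_{i-1},P_i}$ crosses $D_k$ transversally at $A_k$ (it is not contained in $D_k$, since $A_k$ is the \emph{lower} endpoint of $\Gamma\cap D_k$ and $P_{i-1}$ precedes $A_k$ on $\Gamma$, so $P_{i-1}$ lies strictly on the side $y>x-2k$ by the confinement $\Gamma^-\subseteq\{y\le x\}$-type bound of Proposition \ref{p:bounds}(ii) applied at $A_k$ once we know — or in the generic case — that $A_k$ is a light vertex; if $A_k$ is not a light vertex then by (ii) necessarily $A_k=B_k$, $P_{i-1}$ still precedes it, and the bound still forces $P_{i-1}$ strictly above $D_k$). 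A segment with $\theta_i=\pi/4$ through $A_k$ would lie on $D_k$, contradiction; hence $\theta_i<\pi/4$. If moreover $A_k$ is not a light vertex, then $A_k$ lies in the interior of a side of a square (vertical or horizontal), where Snell's law holds; a horizontal outgoing direction $\theta_i=0$ at an interior point of a vertical side would mean the ray is tangent to the interface, which is incompatible with the refraction relation \eqref{f:snell} for a ray genuinely crossing into the square (and the bounds of Proposition \ref{p:bounds}(ii) prevent $\Gamma$ from running along a side here since it must reach $D_{k+1}$), so $\theta_i\neq0$. The argument for $B_k\in\coray{P_{i-1},P_i}$ is identical with the roles of incoming/outgoing reversed, using that $B_k$ is the upper endpoint of $\Gamma\cap D_k$ and the confinement of $\Gamma^+$.

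The main obstacle I anticipate is part (ii): making rigorous the claim that a positive-length arc of $\Gamma$ on $D_k$ must begin and end at lattice points, rather than at arbitrary points where $\Gamma$ happens to change direction. One must carefully rule out, via the first-variation/Snell condition on square sides (where $a_\beta=1$ so there is no refraction) together with the non-crossing and confinement bounds of Proposition \ref{p:bounds}, that $\Gamma$ leaves $D_k$ at a non-lattice point — equivalently, one uses that the only place a geodesic can have a corner is on $\{a_\beta \text{ discontinuous}\}$, and the only place it can follow a discontinuity line for positive length and then leave it is at the lattice points where two discontinuity lines cross.
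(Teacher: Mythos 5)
Your part (i) and the $\theta_i<\pi/4$ portion of (iii) are essentially the paper's argument (connectedness via Proposition \ref{p:bounds}(i), confinement, and the observation that the segment through $A_k$ approaches $D_k$ from the side $x-y<2k$). But there is a genuine gap exactly where you flagged it, in (ii), and it propagates into the non-vertex case of (iii); the source is a wrong geometric picture of where $D_k$ sits. The light diagonal $D_k=\{y=x-2k\}$ meets the grid lines $\{x\in\Z\}\cup\{y\in\Z\}$ \emph{only} at light vertices: every non-vertex point of $D_k$ lies in the open interior of a light square. Hence your statements that a positive-length arc of $\Gamma$ on $D_k$ ``lies on the common boundary of two squares'', that one might have ``a point of $D_k$ in the interior of a side of a square'', or that the diagonal runs ``inside a dark square for $\beta\le\sqrt2$'', are all false, and the Snell-refraction dichotomy you build on them does not apply. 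The correct argument for (ii) is the one you mention in passing and then abandon: if $A_k\neq B_k$ and, say, $A_k$ is not a light vertex, then $A_k$ lies in the interior of a light square $Q$; by Remark \ref{r:seguno}, $\Gamma\cap\overline Q$ is a single straight segment; this segment contains a nondegenerate subsegment of $D_k$ (the part of $\cray{A_k,B_k}$ in $\overline Q$) and also the points of $\Gamma$ immediately preceding $A_k$, which lie in $Q$ but off $D_k$ because $A_k$ is the first point of $\Gamma$ on $D_k$ (ordering from Proposition \ref{p:bounds}(iv)). A straight segment cannot contain a nondegenerate piece of the line $D_k$ together with a point off that line, so $A_k$ must be a light vertex; same for $B_k$. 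This is the step you declared the main obstacle, and it is exactly what the paper's invocation of Remark \ref{r:seguno} settles.

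Two further corrections. In (iii), if $A_k$ is not a light vertex it is \emph{not} on the interior of a side, so your tangency-to-the-interface argument for $\theta_i\neq 0$ has no footing; the paper's argument is that a horizontal segment through an interior point of a light square at non-integer height, continued through the vertical interfaces via Snell's law (\ref{f:snell}) (it never meets a vertex), would force all of $\Gamma$ to be horizontal, contradicting that $\Gamma$ rises from $y=0$ to $y=j\geq 1$. Also, in (i) your derivation of $\zeta_k\le\eta_{k+1}$ from Proposition \ref{p:bounds}(ii) needs $B_k$ to be a light vertex (both bullets of (ii) require an integer point), and your fallback ``apply it at $A_k$'' does not help since then $A_k=B_k$ is likewise not a vertex; the clean route, used in the paper, is Proposition \ref{p:bounds}(iv): all $\theta_i\in[0,\pi/2]$, so $y$ is nondecreasing along $\Gamma$, and since $x-y$ is continuous the diagonals are met in order, giving $\zeta_k\le\eta_{k+1}$ in all cases.
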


\begin{proof}
It is clear that $\Gamma \cap D_k \neq \emptyset$ for every $k=1,\ldots, n$. Moreover, by
Proposition \ref{p:bounds}(i) and Remark \ref{r:seguno}, $\Gamma \cap D_k$ is either a single point, or a segment joining two light
vertices.
The inequality $\zeta_k\leq \eta_{k+1}$, $k=0,\ldots,n-1$ and property (ii) then follow from
Proposition \ref{p:bounds}(iv).


Let us now prove that (iii) holds. By Proposition \ref{p:bounds}(iv), we know that
$\theta_i \in [0,  \pi/2]$. Moreover, if $A_k$ is a light vertex, then
by Proposition \ref{p:bounds}(ii) with $A=A_k$ we get $\theta_i \in [0,  \pi/4]$,.
Finally, it has to be $\theta_i \neq \pi/4$ otherwise $\cray{P_{i-1},A_k}\subseteq \Gamma\cap D_k$,
in contradiction with the definition of $A_k$.

Assume now that $A_k$ belongs to the interior of a light square. Then, by (ii),
$\Gamma \cap D_k = \{A_k\}$, so that $\theta_i< \pi/4$.
Finally  $\theta_i\neq 0$, otherwise $\Gamma$ has to be an horizontal segment, due to Snell's Law (\ref{f:snell}).
\end{proof}

\begin{dhef}\label{d:cuts}
Given $k=0,\ldots,n$, we say that $\Gamma$ cuts the light diagonal $D_k$
if the points $A_k$ and $B_k$, defined in Proposition \ref{p:akbk},
coincide and belong to the interior of a light square.
\end{dhef}

For every $r=1,\ldots, j$, we shall denote by $\Gamma_r$ the curve
\begin{equation}\label{f:gammar}
\Gamma_r= \Gamma \cap \{r-1 <y <r\}\,.
\end{equation}
By Proposition \ref{p:bounds}(i), the intersection $A:=\overline{\Gamma}_r\cap\{y=r-1\}$
is a single point, as well as for
$B:=\overline{\Gamma}_r\cap\{y=r\}$.
The curve $\Gamma_r$ is a Snell path joining the two points $A$ and $B$,
and lying in a single horizontal strip.

\begin{rk}\label{r:esceino}
In what follows we shall assume, without loss of generality, that $\Gamma_1$ is a Snell path starting from
the origin. Namely, if this is not the case, $\Gamma_1=S(C_0,C_1)$ where $C_0=(x_0,0)$ and $C_1=(x_1,1)$,
$0<x_0<x_1$. Let us denote by $p_1$, $q_1$ respectively the thickness of the light zone and of the dark
zone crossed by $\Gamma_1$ , and by $p_2$, $q_2$ the analogous quantities for the
Snell path $S(O,C_2)$, $C_2=(x_1-x_2, 1)$. We have $p_1+q_1=p_2+q_2$, and $p_2\leq p_1$, so that, by
Remark \ref{r:thfix}, $\lng(\Gamma_1)\geq \lng(S(O,C_2))$. Hence
the curve $S(O,C_2)\cup [C_2,C_1]\cup\left(\Gamma \cap \{y\geq 1\}\right)$ is a geodesic.
\end{rk}

The following result is another fairly general property of the geodesics based
on the behavior of the function $\ls(t,\beta)$ studied in the previous section.

\begin{prop}\label{p:diag}
Let $\beta>1$ be given, and let $\kc(\beta)$ be as in Definition \ref{d:kc} (see also Corollary \ref{c:minb}(i)).
For every $r=1,\ldots, j$ the curve
$\Gamma_r$ defined in (\ref{f:gammar}) intersects at most $\kc(\beta)$ light diagonals.
\end{prop}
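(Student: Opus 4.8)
The idea is to argue by contradiction: suppose that $\Gamma_r$ crosses at least $\kc(\beta)+1$ light diagonals, say $D_m, D_{m+1},\ldots, D_{m+\kc(\beta)}$, and derive that $\Gamma_r$ — hence $\Gamma$ — is not a geodesic, by exhibiting a strictly shorter competitor with the same endpoints. The competitor will be built by ``straightening'' $\Gamma_r$ into a path that stays in a single horizontal strip in a more economical way, concatenating a long segment on the side of a square (or on a single light diagonal) with a Snell path of small thickness. The bookkeeping device is the normalized length $\ls(\cdot,\beta)$ studied in Section~3, whose absolute minimum over $[0,+\infty)$ is attained at $t = 2\kc(\beta)$ by Corollary~\ref{c:minb}, and more precisely by Lemma~\ref{l:disnell}: any Snell path in a horizontal strip from $A=(x,r-1)$ to $B=(x+\tau,r)$ satisfies $\lng(\snell(A,B)) - \tau + 1 - \sqrt{2} \geq \ls(2\kc(\beta),\beta)$.

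\textbf{Key steps.} First I would record the geometric meaning of ``$\Gamma_r$ crosses $p$ light diagonals'': by Proposition~\ref{p:akbk} applied inside the strip $\{r-1<y<r\}$, the intersections of $\overline{\Gamma}_r$ with $D_m,\ldots,D_{m+p-1}$ are points (or short diagonal segments) which, because $\theta_i \in [0,\pi/2]$ and the curve moves to the right, force the horizontal thickness $\tau$ of $\Gamma_r$ to be at least of the order of $2p-1$; more precisely the entry and exit points on the strip have $x$-coordinates differing by at least $2(p-1)+1 = 2p-1$ when $p\geq 1$. Second, since $\Gamma_r = \snell(A,B)$ is a Snell path in one strip with thickness $\tau \geq 2p-1 \geq 2\kc(\beta)+1$, I would compare it with the Snell path $\snell(O,C)$, $C=(\tau,1)$ (up to the obvious vertical translation), using Remark~\ref{r:thfix}: among Snell paths of thickness $\tau$, the one crossing the least dark material is the shortest, and that is $\snell(O,C)$ precisely when the left endpoint is a light vertex. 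Third, and this is the crux, I would invoke the strict monotonicity/convexity structure of $\ls(\cdot,\beta)$: since $\tau - 1 \geq 2\kc(\beta)$, Corollary~\ref{c:minb}(vi) gives $\ls(2\kc(\beta),\beta) < \ls(\tau-1,\beta)$ whenever $\tau - 1 > 2\kc(\beta)$, hence $\lng(\snell(A,B)) - \tau + 1 - \sqrt{2} > \ls(2\kc(\beta),\beta)$; replacing the portion of $\Gamma$ inside this strip by the concatenation of horizontal segments of total length $\tau - 1 - 2\kc(\beta)$ on a side $x=2\ell$ or $x=2\ell+1$, followed by a translate of the optimal Snell path $\snell(O,(2\kc(\beta)+1,1))$ — whose normalized length is exactly $\ls(2\kc(\beta),\beta)$ — and by a light-diagonal step of length $\sqrt{2}$, produces a competitor strictly shorter than $\Gamma_r$ with the same endpoints on $\{y=r-1\}$ and $\{y=r\}$, contradicting minimality of $\Gamma$. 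A small amount of care is needed to check that this competitor stays inside the strip and that the inequality is strict; when $\tau - 1 = 2\kc(\beta)$ exactly, one still gets $\tau = 2\kc(\beta)+1 < 2\kc(\beta)+2 \leq 2p-1$ forces $p \leq \kc(\beta)$, so the forbidden case $p = \kc(\beta)+1$ cannot occur.

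\textbf{Main obstacle.} The delicate point is the combinatorial lower bound relating the number $p$ of light diagonals cut by $\Gamma_r$ to its thickness $\tau$: one must be sure that crossing $p$ distinct light diagonals $D_m,\ldots,D_{m+p-1}$ inside one horizontal strip genuinely costs thickness $\geq 2p-1$, which uses that consecutive light diagonals are at horizontal distance $2$ and that $\Gamma_r$, being a graph over the $x$-axis with slope angles in $[0,\pi/2]$, meets each $D_k$ either in one point or in a segment but never ``doubles back''. Once $\tau \geq 2\kc(\beta)+1$ is secured, the rest is a direct application of Lemma~\ref{l:disnell} together with Corollary~\ref{c:minb}(vi), so the real work is this geometric counting together with the explicit construction of the strictly shorter competitor and the verification that it remains a legitimate path in the chessboard structure.
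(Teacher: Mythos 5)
Your overall strategy is the paper's: argue by contradiction, bound the thickness $\tau=x_B-x_A$ of $\Gamma_r$ from below in terms of the number $p$ of diagonals met, and build inside the same strip a shorter competitor consisting of horizontal runs plus a translate of the optimal Snell path $\snell(O,(2\kc(\beta)+1,1))$, the gain coming from Remark~\ref{r:thfix} together with Corollary~\ref{c:minb}(vi); replacing the paper's endpoint-shifting construction (cases (T1)/(T2)) by a direct appeal to Lemma~\ref{l:disnell} is legitimate, since that lemma precedes the proposition. However, two steps fail as written. First, the competitor you describe --- horizontal segments of total length $\tau-1-2\kc(\beta)$, then a translate of $\snell(O,(2\kc(\beta)+1,1))$, \emph{then} a light-diagonal step of length $\sqd$ --- does not join $A=(x_A,r-1)$ to $B=(x_B,r)$: the translated Snell path already spans the whole strip, so the extra diagonal step yields total height $2$ and thickness $\tau+1$. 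The $\sqd$ is already contained in $\lng(\snell(O,(2\kc(\beta)+1,1)))=\ls(2\kc(\beta),\beta)+2\kc(\beta)+\sqd$; the correct competitor (the paper's $\widetilde\Gamma$) is a horizontal segment on $y=r-1$ from $A$ to the nearest light vertex $(2k+r-1,r-1)$, the optimal Snell path, and a horizontal segment on $y=r$ down to $B$, of total optical length $\tau-1+\sqd+\ls(2\kc(\beta),\beta)$ (also, these runs lie on the horizontal sides $y=r-1$, $y=r$, not on lines $x=2\ell$).

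Second, and more seriously, the borderline case is not closed. Your bound is only $\tau\ge 2p-1$, so with $p=\kc(\beta)+1$ you may have $\tau-1=2\kc(\beta)$ exactly; then Corollary~\ref{c:minb}(vi) gives nothing, Lemma~\ref{l:disnell} yields only ``$\ge$'', and a competitor of equal length does not contradict minimality. Your escape, ``$\tau=2\kc(\beta)+1<2\kc(\beta)+2\le 2p-1$ forces $p\le\kc(\beta)$'', is a non sequitur: $p=\kc(\beta)+1$ gives $2p-1=2\kc(\beta)+1$, perfectly compatible with $\tau=2\kc(\beta)+1$. You must either prove the thickness bound strictly, $\tau>2p-1$: equality would force slope $1$ on the initial and final legs of $\Gamma_r$, which is impossible since by Proposition~\ref{p:akbk}(iii) a geodesic crosses a diagonal at an angle $<\pi/4$, hence $\hsn<1/\sqd$ and every segment of the Snell path $\Gamma_r$ has slope $<1$ (this is precisely what the paper's assertion $t_0>2\kc(\beta)$ encodes); or use the equality cases (i)--(ii) of Lemma~\ref{l:disnell}: equality forces $A$ to be a light vertex and $\tau=2\kc(\beta)+1$, so $\Gamma_r$ is itself an optimal Snell path, which meets the first and last diagonals only on the boundary of the strip and therefore crosses at most $\kc(\beta)-1$ of them in the open strip when $\kc(\beta)\ge 1$ (for $\kc(\beta)=0$ it runs along a light diagonal, which is not a cut in the sense of Definition~\ref{d:cuts}, the sense in which the proposition is applied), so the contradiction hypothesis fails in that case anyway. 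Note finally that ``angles in $[0,\pi/2]$'' alone does not even give $\tau\ge 2p-1$: a segment steeper than the diagonals can cross one inside a light square with thickness less than $1$, so the slope bound $<1$ coming from Proposition~\ref{p:akbk}(iii) is needed for your counting step as well.
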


\begin{proof}
Set, as above,
$A=\overline{\Gamma}_r\cap\{y=r-1\}=(x_A,r-1)$
and $B=\overline{\Gamma}_r\cap\{y=r\}=(x_B,r)$
respectively the starting and the ending point of
the Snell path $\overline{\Gamma}_r$.

Assume by contradiction that $\Gamma_r$
intersects more than $\kc(\beta)$ light diagonals, so that $t_0:=x_B-x_A-1>2\kc(\beta)$.


Let $k\in\N$ be the smallest integer such that $2k+r-1\geq x_A$, and let
$m\in\N$ be the largest integer such that $2m+r\leq x_B$.
Set $A_0: = (2n+r-1, r-1)$, $B_0 := (2m+r,r)$.
It is clear that $A_0$ and $B_0$ lie respectively on the first and the last
light diagonal intersected by $\Gamma_r$
(see Figure~\ref{fig3}), so that, by assumption,
$m-n\geq \kc(\beta)$.

\begin{figure}
\includegraphics[width=8cm]{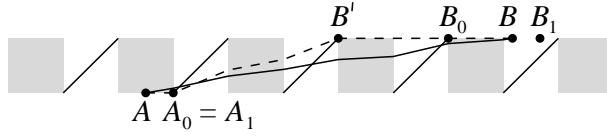}
\caption{Construction of $\widetilde{\Gamma}$, $\kc(\beta)=1$.}
\label{fig3}
\end{figure}

Let us denote by $\Delta_A = 2k+r-1-x_A$ and $\Delta_B = x_B-2m-r$.
It is not restrictive to assume that $\Delta_A\leq \Delta_B$.
The points $A_1=(\alpha,r-1)$ and $B_1=(\alpha',r)$ defined by
\begin{itemize}
\item[(T1)]
$A_1=A_0,\ B_1 = (2m+r+\Delta_A+\Delta_B, r)$,
if either $0<\Delta_A\leq \Delta_B\leq 1$, or
$0<\Delta_A\leq 1,\ 1<\Delta_B\leq 2,\ \Delta_A+\Delta_B<2$;
\item[(T2)]
$A_1 = (2k+r-1-\Delta_A-\Delta_B, r-1),\ B_1 = B_0$,
if either $1\leq \Delta_A\leq \Delta_B\leq 2$, or
$0<\Delta_A\leq 1,\ 1<\Delta_B\leq 2,\ \Delta_A+\Delta_B\geq 2$,
\end{itemize}
are such that $\alpha'-\alpha-1 = t_0 > 2m-2k$, and either $A_1$ or $B_1$ is a light vertex.
Moreover, from a direct inspection we can check that
the thickness of the
dark zone from $A_1$ to $B_1$ is
not greater that the one from $A$ to $B$,
so that by Remark~\ref{r:thfix} we conclude that
\begin{equation}\label{f:thicksnell}
\lng(S(A_1, B_1)) \leq
\lng(S(A, B))\,.
\end{equation}

Let us consider the case (T1),
so that $A_1=A_0$,
and let $B' = (2k+2\kc(\beta)+r, r)$.
The assumption $m-k\geq \kc(\beta)$ implies that
$B'$ lies on the left of $B_0$
(possibly the two points coincide).
Let us
consider the following new paths:
$\Gamma' = S(A_1, B') \cup \cray{B', B_1}$,
and $\Gamma_1 = S(A_1, B_1)$.
Since $2\kc(\beta) < t_0$,
from Corollary~\ref{c:minb}(vi)
we deduce that
\[
\lng(\Gamma') = \ls(2\kc(\beta),\beta) + t_0 +\sqd <
\ls(t_0, \beta) + t_0 + \sqd = \lng(\Gamma_1)\,.
\]
Finally, setting $\widetilde{\Gamma} = \cray{A, A_1} \cup S(A_1, B') \cup \cray{B', B}$ and noticing that
the segments $\cray{A, A_1}$ and $\cray{B, B_1}$ have the same length,
we have that
\[
\lng(\widetilde{\Gamma})=\lng(\Gamma')<
\lng(\Gamma_1)\leq \lng(\Gamma_r),
\]
where the last inequality follows from
(\ref{f:thicksnell}).

The analysis of the case (T2) can be carried out in a similar way,
with obvious modifications.
\end{proof}

%

\section{Geodesics of the chessboard structure ($\beta\geq\sqrt{3/2}$)}
\label{s:geovere}

In this section we shall restrict our analysis to the case $\beta\geq \sqrt{3/2}$, and
we shall provide a complete description of the geodesics joining two light vertices in the chessboard
structure.
The case $\beta < \sqrt{3/2}$ seems to be harder to characterize, as we
shall show by an example (see Example~\ref{r:contro} below).

The next theorem states that, for $\beta > \bc_0$, any geodesic $\Gamma$ joining the origin with the point
$(2n+j,j)$ is a finite union of segments, connecting light vertices,
and lying on light diagonals or on
horizontal lines.

\begin{theorem}\label{t:ottag}
Let $\Gamma$ be a geodesic as in (\ref{f:gamma}).
If $\beta > \bc_0$, then the points $A_k$, $B_k$ are light vertices
for every $k=0,\ldots,n$, and
$\cray{B_{k-1}, A_k}$ is an horizontal segment for every $k=1,\ldots,n$.
\end{theorem}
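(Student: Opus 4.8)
The strategy is to combine the structural results of Section~\ref{s:geod} with the sharp normalized-length estimates for $\beta > \bc_0$. The key input is Corollary~\ref{c:minb}(i)--(ii): for $\beta > \bc_0$ one has $\kc(\beta) = 0$, and $\ls(0,\beta) = 0 < \ls(t,\beta)$ for every $t > 0$. By Proposition~\ref{p:diag}, each curve $\Gamma_r$ (the portion of $\Gamma$ in the strip $\{r-1 < y < r\}$) intersects at most $\kc(\beta) = 0$ light diagonals; hence $\Gamma_r$ does not meet the interior of any light diagonal, and in particular its endpoints on $\{y = r-1\}$ and $\{y = r\}$ are the only places where $\Gamma$ can touch a light diagonal within that strip's closure. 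So the whole intersection $\Gamma \cap D_k$ must be ``concentrated'' at the horizontal integer levels $y = 0, 1, \dots, j$.

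Next I would argue level by level. Fix $k \in \{1, \dots, n\}$ and recall from Proposition~\ref{p:akbk}(i)--(ii) that $\Gamma \cap D_k = \cray{A_k, B_k}$ with $A_k = (2k+\eta_k, \eta_k)$, $B_k = (2k+\zeta_k, \zeta_k)$, $0 \le \eta_k \le \zeta_k \le j$, and that $\eta_k, \zeta_k$ are integers whenever $A_k \neq B_k$. Combining this with the previous paragraph: since no strip interior meets $D_k$, the intersection point(s) lie on horizontal integer lines, so $\eta_k$ and $\zeta_k$ are integers in all cases — i.e.\ $A_k$ and $B_k$ are light vertices. The same reasoning applies to $A_0 = P_0 = (0,0)$ and $B_n = P_N = (2n+j, j)$, which are light vertices by hypothesis. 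To handle the possibility that $\eta_k < \zeta_k$ (a genuine diagonal segment inside $\Gamma$), note that such a segment lies on $D_k$ between two integer levels, which is consistent; and by Proposition~\ref{p:bounds}(i) it must be exactly that segment. This is fine: the claim only asserts the endpoints are light vertices, which we have.

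Finally, for the horizontality of $\cray{B_{k-1}, A_k}$: by Proposition~\ref{p:akbk}(i), $\zeta_{k-1} \le \eta_k$, so $B_{k-1}$ and $A_k$ lie on (possibly different) integer horizontal levels with $\zeta_{k-1} \le \eta_k$. Consider the sub-path $\Gamma'$ of $\Gamma$ joining $B_{k-1}$ to $A_k$; it lies in the region between $D_{k-1}$ and $D_k$ and (by Proposition~\ref{p:bounds}(ii) applied at the light vertex $B_{k-1}$, then at $A_k$) in the strip $\zeta_{k-1} \le y \le \eta_k$. If $\eta_k > \zeta_{k-1}$, this sub-path crosses at least one full horizontal strip without touching any light diagonal, so each of its pieces $\Gamma'_r$ contributes normalized length $\ls(t_r, \beta)$ with $t_r > 0$ (since it starts and ends on vertical-or-horizontal integer lines and the only $t = 0$ case is the trivial diagonal, excluded here). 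By Corollary~\ref{c:minb}(ii), $\ls(t_r, \beta) > 0$, so we could strictly shorten $\Gamma$ by replacing $\Gamma'$ with the concatenation of horizontal segments on $x = $ const and a single segment on $D_k$ — contradicting minimality unless $\Gamma'$ itself is already of that optimal light-path form; in that optimal form, since $B_{k-1}$ and $A_k$ are adjacent light vertices separated only by horizontal translation and the diagonal contribution is accounted at level $D_k$ itself, the connecting piece $\cray{B_{k-1}, A_k}$ must be the horizontal segment. The main obstacle is making this last shortening/comparison argument airtight: one must carefully invoke Lemma~\ref{l:disnell} and Corollary~\ref{c:minb}(ii),(vi) to rule out any non-horizontal detour between $B_{k-1}$ and $A_k$, and to verify that the replacement path genuinely joins light vertices and stays in the admissible region $\mathcal{W}$, so that it is a legitimate competitor.
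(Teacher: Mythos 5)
Your first claim --- that for $\beta>\bc_0$ one has $\kc(\beta)=0$, hence by Proposition~\ref{p:diag} no $\Gamma_r$ meets a light diagonal, so $A_k$, $B_k$ lie at integer heights on $D_k$ and are therefore light vertices --- is correct and is exactly the paper's argument.

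The horizontality part, however, has a genuine gap, and it sits precisely where the content of the theorem lies. You assign to each strip crossing $\Gamma'_r$ of the piece $\Gamma'$ between $B_{k-1}$ and $A_k$ a normalized length $\ls(t_r,\beta)$ and invoke Corollary~\ref{c:minb}(ii); but $\ls(\cdot,\beta)$ and that corollary concern Snell paths issuing from a \emph{light vertex}, whereas the intermediate crossings of $\Gamma'$ start at arbitrary points of the integer horizontal lines (and may even have thickness less than $1$, where $\ls$ is not the relevant object). For such crossings the available estimate is Lemma~\ref{l:disnell}, which for $\beta>\bc_0$ gives only the non-strict bound $\lng(\Gamma'_r)\geq \tau_r-1+\sqrt{2}$, with equality exactly when the crossing is the diagonal of a light square starting at a light vertex. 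To obtain the strict inequality you need, and then to conclude, one must (a) observe that no light diagonal lies strictly between $D_{k-1}$ and $D_k$ and that $\Gamma'$ cannot return to $D_{k-1}$ or $D_k$ by the very definition of $B_{k-1}$ and $A_k$ --- this rules out the equality case and also disposes of your unresolved ``unless $\Gamma'$ is already of that optimal light-path form'' alternative; and (b) actually carry out the comparison: sum the bounds over the $N=\eta_k-\zeta_{k-1}$ strips, add the horizontal portions of $\Gamma'$, use that the total horizontal displacement is $2+N$, and compare with the light polygonal $\cray{B_{k-1},A',A_k}$, of optical length $2+N\sqrt{2}$, to reach a contradiction with minimality when $N\geq 1$; finally, when $\zeta_{k-1}=\eta_k$, Proposition~\ref{p:bounds} pins the intermediate piece to the horizontal line. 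None of this is done --- you yourself flag the comparison as ``the main obstacle''. The paper avoids general crossings altogether: using Proposition~\ref{p:akbk}(iii) and Corollary~\ref{c:minb}(ii) on Snell paths that do start (or end) at the light vertices $B_{k-1}$ and $A_k$, it shows the segments of $\Gamma$ leaving $B_{k-1}$ and entering $A_k$ are horizontal, and then Remark~\ref{r:seguno} together with the elementary polygonal comparison through $A'$ forces $\zeta_{k-1}=\eta_k$. Your route can be repaired along the lines sketched above, but as written the decisive step is missing.
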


\begin{proof}
By Corollary \ref{c:minb}(i) we have that
$k_c(\beta)=0$, so that, by Proposition~\ref{p:diag},
$\Gamma$ never cuts a light diagonal.
Hence the points
$A_k$, $B_k$ defined in Proposition \ref{p:akbk}(i) are vertices of light squares, for every $k=0,\ldots,n$.
Given $r=1,\ldots,n$, let $B_{r-1}$ be the exit point from $D_{r-1}$ and $A_r$ be the access point
to $D_r$, and let $i$, $i'=1,\ldots N$ be such that $B_{r-1}\in \coray{P_{i-1}, P_i}$, and
$A_r\in \ocray{P_{i'-1}, P_{i'}}$.
We have that $\theta_i=\theta_{i'}=0$, that
is both $\cray{P_{i-1}, P_i}$ and $\cray{P_{i'-1}, P_{i'}}$ lie on the horizontal sides of the squares.
Indeed, by Proposition \ref{p:akbk}(iii), $\theta_{i'}, \theta_i\in [0,\pi/4)$, and,
if either  $\theta_i$ or $\theta_{i'}$ belong to $(0,\pi/4)$ , then $\Gamma$ should
contain a Snell path starting from a vertex of a light square and lying in a horizontal strip,
a contradiction with Corollary (\ref{c:minb})(ii) and the local
optimality of $\Gamma$.

Then $\theta_{i}=\theta_{i'}=0$, and, by Remark \ref{r:seguno}, there exists two points $A$, $B$ such that the segments
$\cray{B_{r-1},B}$ and $\cray{A,A_r}$ are horizontal, with length greater than
or equal to 1, and they are contained in $\Gamma$.
In order to complete the proof we have to show that $\cray{B,A}$ is a horizontal segment.
Assume by contradiction that this is not the case, that is $B \neq A'$, where $A'$
is the intersection of the line containing $\cray{A,A_r}$ with $D_{r-1}$.
In this case the length of the
polygonal line $\cray{B_{r-1},A',A_r}$ is less than
the length of any curve joining $B_{r-1}$ with $A_r$ and containing $\cray{B_{r-1},B}$ and $\cray{A,A_r}$,
in contradiction with the local minimality of $\Gamma$.
\end{proof}

\begin{dhef}\label{d:stre}
An $S_3$--path is a Snell path joining the point $(2m+k,k)$ to the point $(2m+k+3,k+1)$ for some
$m$, $k\in\N$. We shall denote its normalized length by
\[
\lambda_3=\ls(2,\beta)=
\frac{2}{\sqrt{1-\sn_3^2}}
+\frac{\beta^2}{\sqrt{\beta^2-\sn_3^2}}-2-\sqd,
\]
where $\sn_3 = \hsn(2,\beta)$ is implicitly defined by
\[
\frac{2\sn_3}{\sqrt{1-\sn_3^2}}+\frac{\sn_3}{\sqrt{\beta^2-\sn_3^2}}-1=0.
\]
The optical length of an $S_3$--path will be denoted by $\Lambda_3=\lambda_3+2+\sqd$.
\end{dhef}

\begin{rk}\label{r:r4}
It is straightforward to check that
$\sqrt{10} < \Lambda_3 < \beta\sqrt{10}$,
and, by the very definition of $\bc_0$,
$\Lambda_3 = 2+\sqrt{2}$ when $\beta=\bc_0$.
Moreover, $1/\sqrt{10} < \sn_3 < \beta/\sqrt{10}$.
\end{rk}

The $S_3$--paths will play a fundamental r\^ole in the analysis of the geodesics for
$\sqrt{3/2} \leq \beta < \bc_0$ (see Theorem \ref{t:nottag} below).
Namely for $\beta$ in this range the $S_3$--paths have the minimal normalized length
among all the Snell paths starting from a light vertex $(2m+k,k)$ and reaching a point on the
line $y=k+1$ (see Corollary \ref{c:minb}).

\begin{theorem}\label{t:nottag}
Let $\Gamma$ be a geodesic as in (\ref{f:gamma}).
If $\sqrt{3/2} \leq \beta < \bc_0$, then the points $A_k$, $B_k$ are light vertices
for every $k=0,\ldots,n$.
Moreover $B_{k-1}$ is connected to $A_{k}$ either by an horizontal segment or
by an $S_3$--path for every $k=1,\ldots,n$.
\end{theorem}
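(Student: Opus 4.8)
The plan is to adapt the argument used for Theorem~\ref{t:ottag}, with the $S_3$--paths of Definition~\ref{d:stre} playing the r\^ole that horizontal segments had there. By the preceding lemma $\bc_1<\sqrt{3/2}\le\beta<\bc_0$, so Corollary~\ref{c:minb}(i) gives $\kc(\beta)=1$; moreover $\beta\ne\bc_k$ for every $k$ (the only $\bc_k$ in $[\sqrt{3/2},+\infty)$ is $\bc_0$, which is excluded), so the strict alternatives in Lemma~\ref{l:disnell}(i) and in Corollary~\ref{c:minb}(vi) are available, and by Corollary~\ref{c:minb} the function $\ls(\cdot,\beta)$ attains its absolute minimum only at $t=2$, with value $\lambda_3=\ls(2,\beta)<0$. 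By Proposition~\ref{p:diag} each $\Gamma_r$ meets at most one light diagonal, and by Proposition~\ref{p:akbk}(i)--(ii) we have $\Gamma\cap D_k=\cray{A_k,B_k}$, with $A_k$, $B_k$ light vertices unless $A_k=B_k$ lies in the interior of a light square, i.e.\ unless $\Gamma$ cuts $D_k$ (Definition~\ref{d:cuts}). Thus the first assertion amounts to excluding cuts, and the second to describing the portion $\Pi_k$ of $\Gamma$ joining $B_{k-1}$ to $A_k$.

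Since $\Gamma$ is nondecreasing in both coordinates (Proposition~\ref{p:bounds}(iv)), it splits into the Snell paths $\Gamma_r$ --- each of thickness $\tau_r\ge 0$ and height exactly $1$ --- and horizontal segments lying on horizontal sides of squares, and the sum of the $\tau_r$ and of the lengths of those segments equals $2n+j$. Applying Lemma~\ref{l:disnell} to each $\Gamma_r$ and summing yields $\lng(\Gamma)\ge 2n+j(\sqd+\lambda_3)$, and the equality discussion of Lemma~\ref{l:disnell}(i) shows that equality in the $r$-th term forces $\Gamma_r$ to be an $S_3$--path. On the other hand, by minimality $\lng(\Gamma)$ is at most the length of the explicit competitor of the announced form that performs $\min\{n,j\}$ $S_3$--paths and completes the vertical and horizontal displacements with light-diagonal and horizontal segments --- a competitor that is itself a geodesic because $\lambda_3<0$, i.e.\ $\Lambda_3<2+\sqd$ (Remark~\ref{r:r4}), so that an $S_3$--path is strictly shorter than two horizontal segments plus a light-diagonal segment. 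Confronting the lower bound with this upper bound, strip by strip, and using that for $\beta\ge\sqrt{3/2}$ one has $\ls(t,\beta)>0$ on $(0,1]$ (Corollary~\ref{c:minb}(v)) together with the \emph{strict} inequality $\ls(t,\beta)>\lambda_3$ for $t>2$ (Corollary~\ref{c:minb}(vi)), I would reduce to showing that every $\Gamma_r$ is either an $S_3$--path or a light-diagonal segment of thickness $1$; in particular no $\Gamma_r$ is vertical, no $\Gamma_r$ crosses a light diagonal transversally, and $\Gamma$ never cuts a light diagonal, so the $A_k$, $B_k$ are light vertices.

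It then remains to identify $\Pi_k$. By Proposition~\ref{p:bounds}(i) and the fact that, by Proposition~\ref{p:akbk}(i), $B_{k-1}$ and $A_k$ are respectively the last and the first point of $\Gamma$ on $D_{k-1}$ and on $D_k$, the path $\Pi_k$ lies in the closed region between $D_{k-1}$ and $D_k$. That region contains no light vertex off $D_{k-1}\cup D_k$, and $B_{k-1}$ (resp.\ $A_k$) is the only point of $\Pi_k$ on $D_{k-1}$ (resp.\ $D_k$); hence $\Pi_k$ can contain no light-diagonal segment, while the only $S_3$--path it could contain must issue from $B_{k-1}$ and therefore land exactly at $A_k$, so that there is at most one. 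Consequently $\Pi_k$ is either a single horizontal segment (then $\eta_k=\zeta_{k-1}$) or a single $S_3$--path joining $B_{k-1}$ to $A_k$ (then $\eta_k=\zeta_{k-1}+1$), the remaining vertical climb being carried by the diagonal segments $\cray{A_k,B_k}$. This is exactly the claimed structure.

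The crux is the strip-by-strip equality analysis of the second paragraph. Since the global bound obtained by summing Lemma~\ref{l:disnell} fails to be attained when $n<j$, one cannot simply assert ``equality everywhere''; one must control the individual slacks $\lng(\Gamma_r)-(\tau_r-1+\sqd+\lambda_3)$, ruling out in particular Snell pieces whose thickness is close to but different from $3$, and ``staircase'' detours that distribute one unit of vertical climb over several consecutive strips (for which one would chain together the pieces between two successive light vertices of $\Gamma$ and compare with an $S_3$--path or a light-diagonal reroute). This is precisely where the strict monotonicity and convexity of $\ls(\cdot,\beta)$ recorded in Lemma~\ref{r:spiegaz1} and Corollary~\ref{c:minb} are indispensable. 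A further, routine, complication is the bookkeeping when a light diagonal is met exactly at an integer height, i.e.\ at an interface between two horizontal strips.
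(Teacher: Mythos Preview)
Your sandwich argument is clean and actually closes when $n\ge j$: the lower bound $\lng(\Gamma)\ge 2n+j(\sqd+\lambda_3)$ obtained by summing Lemma~\ref{l:disnell} over the strips coincides with the length of the competitor made of $j$ $S_3$--paths and horizontal segments, so the equality clause of Lemma~\ref{l:disnell}(i) forces each $\Gamma_r$ to be an $S_3$--path starting at a light vertex, and the rest follows. The difficulty, which you correctly flag, is the regime $n<j$. There the competitor has length $2n+j\sqd+n\lambda_3$, which exceeds your lower bound by $(j-n)|\lambda_3|>0$, so no equality analysis is available and your argument stops. The last paragraph of your proposal is an accurate description of what remains to be done, but it is \emph{not} a proof: ruling out a $\Gamma'$ that cuts several consecutive diagonals in the interior of light squares before reaching a light vertex is precisely the content of the theorem, and Lemma~\ref{l:disnell} is too coarse for this, because it gives the \emph{same} lower bound $\lambda_3$ to every strip regardless of whether that strip carries a cut.

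The paper's proof attacks exactly this obstruction, and with a different tool. It fixes the first index $i$ for which $A_i$ is a light vertex, sets $A_i=(2i+m,m)$, and must exclude $m\ge 2$. The pieces $\Gamma_{r_k}$ that cut $D_k$ ($1\le k\le i-1$) are Snell paths whose portions on either side of the cut have \emph{fractional} heights $h_k$ and $1-h_k$; to estimate them one needs the two--variable normalized length $\lh(t,\beta,h)$ of \eqref{f:elleh} and, crucially, its strict convexity and monotonicity in $h$ (Lemma~\ref{l:elleh}), which your proposal does not invoke. Linearising $\lh$ in $(t,h)$ at $(2,1)$ yields the pointwise bound \eqref{f:convex}; summing over $k$ produces the estimate \eqref{f:stimax} for $\lng(\Gamma')$, which is then compared with the explicit competitor $\Gamma''$ built from $i$ $S_3$--paths and a diagonal segment. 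The comparison finally reduces to the inequalities \eqref{f:stimaxxx}, checked by hand via a monotone auxiliary function $\psi$ and the bound $\sn_3>1/\sqrt{10}$; this is where $\beta\ge\sqrt{3/2}$ is genuinely used, and it is not a consequence of the qualitative monotonicity/convexity statements in Lemma~\ref{r:spiegaz1} and Corollary~\ref{c:minb} alone. In short, the missing idea in your outline is the $h$--variable convexity of $\lh$, without which the cuts cannot be priced correctly.
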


\begin{proof}
By Remark \ref{r:esceino} we can assume, without loss of generality, that
$A_0=B_0 = (0,0)$.
Let us define
\begin{equation}\label{f:defi}
i = \min\{r\in \{1,\ldots,n\};\ A_r\ \text{is a light vertex}\}\,.
\end{equation}
\smallskip
Notice that the index $i$ in (\ref{f:defi}) is well defined, since
as a consequence of Proposition \ref{p:akbk}(i) and (ii) at least $A_n$ is a light vertex.
Moreover,
if $i\geq 2$, then $\Gamma$ cuts every light diagonal
$D_k$, $k=1,\ldots, i-1$, that is,
the points $A_k$ and $B_k$ coincide and they are not
light vertices.

Let us denote by $\Gamma'$ the portion of $\Gamma$
joining $A_0=B_0=(0,0)$ to $A_i$.
We are going to prove that
\begin{equation}\label{f:toprove}
\begin{split}
&i = 1,\ \text{and}\\
&\text{$\Gamma'$ is either an horizontal segment
or an $S_3$--path}.
\end{split}
\end{equation}
Once these properties are proved, then $B_1$ is a light vertex
and, repeating the procedure $n$ times, we reach the conclusion.

Let $m\in\N$ be such  that $A_i=(2i+m,m)$.
If $m=0$, then $B_0$ is connected to $A_i$ by an horizontal
segment, so that $i=1$ and (\ref{f:toprove}) holds.
If $m=1$, then by Corollary~\ref{c:minb}(ii) and the local minimality of $\Gamma$,
we conclude that $i=1$ and $\Gamma'$ is and $S_3$--path.
It remains to prove that the case $m\geq 2$ cannot happen.

For every $k = 0,\ldots,i$ let
$r_k=1,\ldots,m$ be such that
$A_k=(2k+\eta_k, \eta_k)\in\overline{\Gamma}_{r_k}$,
where $\Gamma_{r}$ is the Snell path defined in (\ref{f:gammar}).
Let us denote by $C_k=(x_k, r_k-1)$ and $C_k'=(x'_k, r_k)$ respectively the
starting and the ending point of $\Gamma_{r_k}$,
let $\delta_k = x'_k-x_k$ be the thickness of $\Gamma_{r_k}$,
and define
\[
t^-_k = 2k+r_k-1-x_k,\quad
t^+_k = x_k'-2k-r_k,\quad
h_k = \eta_k-\floor{\eta_k}\,,
\]
so that $\delta_k = t^-_k+t^+_k+1$
(see Figure~\ref{fig4}).

\begin{figure}
\includegraphics[width=8cm]{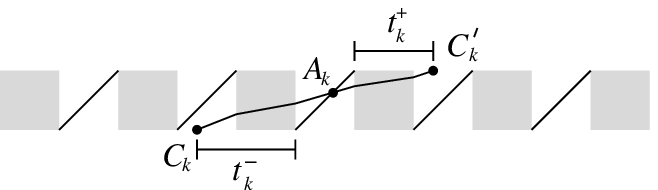}
\caption{}
\label{fig4}
\end{figure}

For $k=0$, we have that $r_0=1$, $C_0=B_0=(0,0)$ and $C_0' = (\delta_0, 1)$.
By construction we have $\delta_0 > 1$.
We claim that $2<\delta_0<3$.
Namely, by Corollary \ref{c:minb}(v) and (iii), we have
\[
\begin{cases}
\lng(\Gamma_1) =\ls(\delta_0,\beta)+\sqrt{2}+\delta_0-1 >\sqrt{2}+\delta_0-1
= \lng(\cray{O,(1,1),(\delta_0,1)})\,, & \text{if}\ \delta_0\in (1,2],\\
\lng(\Gamma_1) >\ls(2,\beta)+\sqrt{2}+\delta_0-1 \geq
\lng\left(\snell(0,(3,1))\cup\cray{(3,1),(\delta_0,1)}\right)\,, & \text{if}\ \delta_0>3\,,
\end{cases}
\]
hence, by the local minimality of $\Gamma_1$, we cannot have neither $1<\delta_0\leq 2$ nor $\delta_0>3$.
For $k=i$ the same arguments show that $2<\delta_i<3$.
Hence, from Proposition \ref{p:diag}, we have that
$1=r_0<r_1<\cdots<r_i=m$.

Finally,
by Lemma~\ref{l:elleh},
\[
\lh(t^-_k, \beta, h_k) > \lh(t^-_k, \beta, 1),\quad
\lh(t^+_k, \beta, 1-h_k) > \lh(t^+_k,\beta, 1), \qquad  k=1,\ldots,i-1,\ (i\geq 2)
\]
where $\lh$ is the function defined in (\ref{f:elleh}).
Moreover, being $\Gamma$ a geodesic, we have $\lh(t^-_k, \beta, h_k)<0$ and
$\lh(t^+_k, \beta, 1-h_k)<0$, so that by Corollary \ref{c:minb}(v), we have $t^-_k, t^+_k>1$.
On the other hand, from Corollary~\ref{c:minb} and Proposition \ref{p:diag},
we cannot have $t^-_k\geq 2$ or $t^+_k\geq 2$,
otherwise $\Gamma_{r_k}$ would intersect more than
one light diagonal.
In conclusion, we have that $t^-_k,t^+_k\in (1,2)$, so that
$\delta_k \in (3, 5)$, $k=1,\ldots,i-1$ ($i\geq 2$).

Let us define
\[
\Delta = \sum_{k=0}^i \delta_k\,.
\]
By construction and from the estimates above we have
\[
 \Delta \leq 2i+m,\quad m\geq i+1,\quad 3i+1<\Delta<5i+1.
\]
It is straightforward to show that
\[
\lng\left(\Gamma'\setminus \bigcup_{k=0}^i\Gamma_{r_k}\right)\geq
\lng(\cray{(\Delta,i+1),(2i+m,m)}),
\]
so that
\[
\lng(\Gamma')\geq
\sum_{k=0}^i \lng(\Gamma_{r_k})+
\sqrt{(2i+m-\Delta)^2+(m-i-1)^2}.
\]
$\lng(\Gamma_{r_k})$ can be estimated using the function
$\lh$.
For $k=0$ and $k=i$ we have that
\begin{equation}\label{f:lenz}
\begin{split}
\lng(\Gamma_{r_0}) = \lh(\delta_0-1,\beta,1)+\delta_0-1+\sqd,\\
\lng(\Gamma_{r_i}) = \lh(\delta_i-1,\beta,1)+\delta_i-1+\sqd\,,
\end{split}
\end{equation}
while, for $k=1,\ldots,i-1$, $(i\geq 2)$
\begin{equation}\label{f:leni}
\lng(\Gamma_{r_k}) =
\lh(t^-_k,\beta,h_k)+\lh(t^+_k,\beta,1-h_k)+\delta_k-1+\sqd\,.
\end{equation}
{}From  Lemma~\ref{r:spiegaz1}(iii) and Lem\-ma~\ref{l:elleh} we have that
\[
\lh(t,\beta,h) >
\lh_h(t,\beta,1)(h-1)+\lh_t(2,\beta,1)(t-2)+\lambda_3
\]
for every $(t,h)\in (1,2)\times [0,1]$.
(We recall that $\lambda_3=\lh(2,\beta,1)$, see Definition~\ref{d:stre}).
Moreover, from (\ref{f:ellet}) and (\ref{f:delleh}) we have that
\[
\lh_t(2,\beta,1) = \sqrt{1-\sn_3^2}-1,\quad
\lh_h(t,\beta,1) = \sqrt{1-\tsn^2(t,\beta,1)}+\tsn(t,\beta,1)-\sqd\,.
\]
where $\sn_3 = \tsn(2,\beta,1)$, see Definition~\ref{d:stre}.
Since $t\mapsto \tsn(t,\beta,1)$ is a decreasing function for $t\geq 0$, and the map
$s\mapsto \sqrt{1-s^2}+s-\sqrt{2}$ is increasing for $0\leq s\leq 1/\sqrt{2}$, we finally obtain
\begin{equation}\label{f:convex}
\lh(t,\beta,h) > \lambda_3+\left(\sqrt{1-\sn_3^2}-1\right)(t-2)+
\left(\sqrt{1-\sn_3^2}+\sn_3-\sqd\right)(h-1)
\end{equation}
for every $(t,h)\in (1,2)\times[0,1]$.
{}From (\ref{f:lenz}), (\ref{f:leni}) and (\ref{f:convex}) we obtain
\begin{equation}\label{f:elenz}
\begin{split}
\lng(\Gamma_{r_0}) & > \lambda_3+\left(\sqrt{1-\sn_3^2}-1\right)(\delta_0-3)+\delta_0-1+\sqd,\\
\lng(\Gamma_{r_i}) & > \lambda_3+\left(\sqrt{1-\sn_3^2}-1\right)(\delta_i-3)+\delta_i-1+\sqd,\\
\lng(\Gamma_{r_k}) & > 2\lambda_3+\left(\sqrt{1-\sn_3^2}-1\right)(\delta_k-5)
-\sqrt{1-\sn_3^2}-\sn_3+\delta_k-1+2\sqd,
\end{split}
\end{equation}
($k=1,\ldots,i-1$, $i\geq 2$), so that
\[
\sum_{k=0}^i \lng(\Gamma_{r_k}) >
\Delta\sqrt{1-\sn_3^2} +\sn_3+\left(2\sqd+4-6\sqrt{1-\sn_3^2}-\sn_3+2\lambda_3\right)i
\]
and
\begin{equation}\label{f:stimax}
\begin{split}
\lng(\Gamma') > {} & \Delta\sqrt{1-\sn_3^2} +\sn_3+\left(2\sqd+4-6\sqrt{1-\sn_3^2}-\sn_3+2\lambda_3\right)i\\
& +\sqrt{(2i+m-\Delta)^2+(m-i-1)^2}\,.
\end{split}
\end{equation}
Let us now consider the path $\Gamma''$ starting from the origin,
obtained by the concatenation of
$i$ $S_3$-paths and the segment connecting the point
$(3i,i)$ to $A_i = (2i+m,m)$.
Since this segment connects two points on the light diagonal $D_i$, its
length is $(m-i)\sqd$, so that
\[
\lng(\Gamma'') = i\Lambda_3+(m-i)\sqd =
i(\lambda_3+2+\sqd)+(m-i)\sqd.
\]
We are going to show that $\lng(\Gamma'') < \lng(\Gamma')$,
in contradiction with the local minimality of $\Gamma'$.
We have that
\begin{equation}\label{f:stimaxx}
\begin{split}
\lng(\Gamma')-\lng(\Gamma'') > {} &
\left(2+\sqd-3\sqrt{1-\sn_3^2}-\sn_3+\lambda_3\right)i
-\left(\sqd-\sqrt{1-\sn_3^2}-\sn_3\right)\\
& +\sqrt{\mu_1^2+\mu_2^2}-\mu_1 \sqrt{1-\sn_3^2}
-\mu_2\left(\sqd-\sqrt{1-\sn_3^2}\right)
\end{split}
\end{equation}
where
\[
\mu_1 = 2i+m-\Delta\geq 0,\quad \mu_2 = m-i-1\geq 0.
\]
Since $3i+1<\Delta$ and $m\geq i+1$ we have that
\begin{equation}\label{f:stimaxxaa}
0\leq\mu_1<\mu_2.
\end{equation}
Moreover
\begin{equation}\label{f:stimaxxa}
\sqrt{\mu_1^2+\mu_2^2}-\mu_1 \sqrt{1-\sn_3^2}
-\mu_2\left(\sqd-\sqrt{1-\sn_3^2}\right) =
\mu_2\, \varphi\left(\frac{\mu_1}{\mu_2}\right)\,,
\end{equation}
where
\[
\varphi(s) = \sqrt{1+s^2}-s\,\sqrt{1-\sn_3^2}-\left(\sqd-\sqrt{1-\sn_3^2}\right)\,.
\]
Since $0<\sn_3<1/\sqd$, we have that $1/\sqd < \sqrt{1-\sn_3^2} < 1$.
It can be easily checked that $\varphi'(s)<0$ for every $s\in [0,1]$, hence
\begin{equation}\label{f:stimaxxb}
0=\varphi(1) < \varphi(s),\qquad
\forall s\in [0,1).
\end{equation}
{}From (\ref{f:stimaxx}), (\ref{f:stimaxxaa}), (\ref{f:stimaxxa})
and (\ref{f:stimaxxb}) we thus get
\begin{equation}\label{f:stimaxy}
\begin{split}
\lng(\Gamma')-\lng(\Gamma'') > {} &
\left(2+\sqd-3\sqrt{1-\sn_3^2}-\sn_3+\lambda_3\right)i
-\left(\sqd-\sqrt{1-\sn_3^2}-\sn_3\right)\,.
\end{split}
\end{equation}
We claim that
\begin{equation}\label{f:stimaxxx}
\begin{split}
2+\sqd-3\sqrt{1-\sn_3^2}-\sn_3+\lambda_3
>\sqd-\sqrt{1-\sn_3^2}-\sn_3> 0\,.
\end{split}
\end{equation}
The second inequality
in (\ref{f:stimaxxx}) easily follows from the fact that $0<\sn_3<1/\sqd$.
Concerning the first one,
by the very definition of $\lambda_3$,
and since $b\mapsto b^2/\sqrt{b^2-\sn_3^2}$ is an
increasing function in $[1,+\infty)$, we have that
\[
\begin{split}
& \left(2+\sqd-3\sqrt{1-\sn_3^2}-\sn_3+\lambda_3\right)
-\left(\sqd-\sqrt{1-\sn_3^2}-\sn_3\right)\\
& =
-2\sqrt{1-\sn_3^2}+\frac{2}{\sqrt{1-\sn_3^2}}
+\frac{\beta^2}{\sqrt{\beta^2-\sn_3^2}}-\sqd\\
& \geq
-2\sqrt{1-\sn_3^2}+\frac{2}{\sqrt{1-\sn_3^2}}
+\frac{3/2}{\sqrt{3/2-\sn_3^2}}-\sqd =: \psi(\sn_3).
\end{split}
\]
It can be checked that $\psi$ is strictly increasing in $(0,1)$.
Since, by Remark \ref{r:r4}, $\sn_3>1/\sqrt{10}$, we have that
\[
\psi(\sn_3) > \psi\left(\frac{1}{\sqrt{10}}\right) =
\frac{\sqrt{10}}{15}+\frac{3\sqrt{10}}{2\sqrt{14}}-\sqrt{2}>0\,,
\]
and (\ref{f:stimaxxx}) is proved.

Since $i\geq 1$, it is straightforward to check that
(\ref{f:stimaxy}) and (\ref{f:stimaxxx})
imply that $\lng(\Gamma')-\lng(\Gamma'')>0$, in contradiction with the local minimality
of $\Gamma'$.
\end{proof}

\begin{theorem}\label{t:lenb}
Let $\Gamma$ be a geodesic from the origin to
a light vertex $\xi = (x,y)$, with $0\leq y\leq x$.
\begin{itemize}
\item[(i)]
If $\beta\geq\bc_0$, then
$\lng(\Gamma) = x + (\sqd-1)y$.

\item[(ii)]
If $\sqrt{3/2} \leq \beta \leq \bc_0$, then
\[
\lng(\Gamma) =
\begin{cases}
x+(\Lambda_3-3)y,
&\text{if\ $0\leq y\leq x/3$},\\
\dfrac{\Lambda_3-\sqd}{2}\, x + \dfrac{3\sqd - \Lambda_3}{2}\, y,
&\text{if\ $x/3\leq y\leq x$}\,,
\end{cases}
\]
where $\Lambda_3 = \Lambda_3(\beta)$ is the length of an
$S_3$--path, introduced in Definition~\ref{d:stre}.
\end{itemize}
\end{theorem}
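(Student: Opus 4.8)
The plan is to reduce the evaluation of $\lng(\Gamma)$ to the structural descriptions of geodesics already proved in Theorems~\ref{t:ottag} and~\ref{t:nottag}, and then to perform an elementary bookkeeping of the horizontal and vertical displacements of $\Gamma$. First I would dispose of the two cases excluded from the standing assumption~(\ref{f:gamma}): if $y=0$ the point $\xi=(x,0)$ lies on the grid line $\{y=0\}$ (where $a_\beta=1$), so the unique geodesic is $\cray{O,\xi}$ and $\lng(\Gamma)=x$; if $x=y$ then $\xi\in D_0$ and the unique geodesic is $\cray{O,\xi}\subseteq D_0$, so $\lng(\Gamma)=\sqd\,y$; in both cases the formula in~(i) and the two branches of the formula in~(ii) reduce to the correct value. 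For the remaining range $n,j\ge1$ I would use the notation $A_k=(2k+\eta_k,\eta_k)$, $B_k=(2k+\zeta_k,\zeta_k)$ of Proposition~\ref{p:akbk} and write $d_k:=\zeta_k-\eta_k\ge0$ for the vertical extent of $\cray{A_k,B_k}=\Gamma\cap D_k$, noting that a horizontal link $\cray{B_{k-1},A_k}$ between the consecutive light diagonals $D_{k-1}$ and $D_k$ necessarily has length $2$, whereas an $S_3$--path joining $D_{k-1}$ to $D_k$ rises by exactly $1$.

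For part~(i), assuming first $\beta>\bc_0$: Theorem~\ref{t:ottag} says that each $A_k$, $B_k$ is a light vertex and that $\Gamma$ is the concatenation of the $n+1$ diagonal pieces $\cray{A_k,B_k}\subseteq D_k$, of lengths $\sqd\,d_k$, with the $n$ horizontal links, of length $2$. Since $\Gamma$ rises from $\{y=0\}$ to $\{y=j\}$ and the horizontal links contribute no height, $\sum_{k=0}^n d_k=j=y$, and with $2n=x-y$ one gets $\lng(\Gamma)=\sqd\,y+2n=x+(\sqd-1)y$.

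For part~(ii), assuming $\sqrt{3/2}\le\beta<\bc_0$: here $\kc(\beta)=1$ and, by Lemma~\ref{l:dwrtb} and the definition of $\bc_0$, $\lambda_3=\ls(2,\beta)=\ls(0,\beta)+\delta(0,\beta)=\delta(0,\beta)<0$, i.e.\ $\Lambda_3<2+\sqd$. By Theorem~\ref{t:nottag} each $A_k$, $B_k$ is a light vertex and each link $\cray{B_{k-1},A_k}$ is either horizontal (height $0$, length $2$) or an $S_3$--path (height $1$, length $\Lambda_3$); letting $p$ be the number of $S_3$--links, the height balance $\sum_{k=0}^n d_k+p=y$, together with $2n=x-y$ and $\lambda_3=\Lambda_3-2-\sqd$, gives
\[
\lng(\Gamma)=\sqd\sum_{k=0}^n d_k+2(n-p)+p\,\Lambda_3=x+(\sqd-1)y+p\,\lambda_3\,.
\]
From $\sum_k d_k=y-p\ge0$ and $p\le n$ one has $p\le\min\{n,y\}$; conversely the path built from a segment of $D_0$ of vertical extent $y-\min\{n,y\}$ (possibly a point), then $\min\{n,y\}$ $S_3$--paths, then $n-\min\{n,y\}$ horizontal segments of length $2$, is admissible and has length $x+(\sqd-1)y+\min\{n,y\}\,\lambda_3$, so the minimality of $\Gamma$ and $\lambda_3<0$ force $p=\min\{n,y\}$. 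Since $n=(x-y)/2$, one has $\min\{n,y\}=y$ exactly when $3y\le x$ and $\min\{n,y\}=(x-y)/2$ exactly when $x\le 3y$; substituting these values and $\lambda_3=\Lambda_3-2-\sqd$ produces the two announced expressions. The threshold $\beta=\bc_0$ (allowed in both~(i) and~(ii)) I would handle either by noting that there $\lambda_3=0$ and $\Lambda_3=2+\sqd$, so that the structural statement of Theorem~\ref{t:nottag} still holds and both formulas collapse to $x+(\sqd-1)y$, or by invoking the continuity of the map $\beta\mapsto\lng(\Gamma)$ (the minimal optical length from $O$ to $\xi$).

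The substance of the argument is entirely contained in Theorems~\ref{t:ottag} and~\ref{t:nottag}; the rest is routine arithmetic, so I do not expect a genuine obstacle. The two points that need a sentence of care are the exact length $2$ of a horizontal link and the exact height $1$ of an $S_3$--link (both read off the coordinates of two light vertices on consecutive light diagonals), and, in part~(ii), checking that the competitor path used to pin down $p=\min\{n,y\}$ is admissible --- which it is, since its $S_3$--links each stay in a single horizontal strip and its horizontal links lie on the sides of light squares.
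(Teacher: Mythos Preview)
Your proof is correct and follows essentially the same approach as the paper: both arguments reduce to Theorems~\ref{t:ottag} and~\ref{t:nottag} and then perform the combinatorial optimization over the number of $S_3$--paths, handling the borderline $\beta=\bc_0$ at the end. The only cosmetic difference is that the paper sets up part~(ii) with three counters $(t,r,d)$ subject to the two linear constraints $3t+r+d=x$, $t+d=y$, while you parametrize directly by the number $p$ of $S_3$--links and eliminate the rest; the resulting expression $\lng(\Gamma)=x+(\sqd-1)y+p\,\lambda_3$ and the maximization $p=\min\{n,y\}$ are identical.
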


\begin{proof}
(i) For $\beta > \bc_0$
it is a straightforward consequence of Theorem~\ref{t:ottag}.

(ii) Let us consider the case $\sqrt{3/2} \leq \beta < \bc_0$.
From Theorem~\ref{t:nottag} we know that $\Gamma$ is the
concatenation of $S_3$--paths
and segments joining light vertices,
lying on light diagonals or on horizontal lines.
Hence we have that
\[
\lng(\Gamma) = t\, \Lambda_3 + r + d\sqd,
\]
where $t$ is the number of $S_3$--paths,
$r$ is the number of unit horizontal segments,
and $d$ is the number of diagonals of light squares.
It is clear that the three numbers $t,r,d\in\N$ must satisfy
the constraints
\[
3t+r+d = x,\quad
t+d=y\,,
\]
so that
\[
\lng(\Gamma)=(\Lambda_3-2-\sqd) t + x + (\sqd-1) y\,.
\]
Since $\Lambda_3 < 2+\sqd$, $L$ is minimized by choosing the largest
admissible value of $t$,
which is
$y$ if $y\leq x/3$,
and $(x-y)/2$ if $y\geq x/3$.
(We remark that $(x-y)/2$ is an integer number,
since the point $P=(x,y)$ is a light vertex.)
In conclusion, if $y\leq x/3$ we choose $t=y$, $d=0$ and $r=x-3y$,
whereas if $y\geq x/3$ we choose
$t = (x-y)/2$, $d = (3y-x)/2$, $r=0$,
obtaining (ii).

Finally, if $\beta=\bc_0$, it is straightforward to check that
the lengths of geodesics can be computed indifferently as in (i) or in (ii).
We remark that, in this case, these two formulas give the same result,
since $\Lambda_3 = 2+\sqd$.
\end{proof}

One may wonder if the previous characterization of the geodesics
for $\sqrt{3/2}\leq\beta < \bc_0$ remains valid for
$\bc_1<\beta<\bc_0$.
The following example shows that this is not the case.

\begin{ehse}\label{r:contro}
Let $\Gamma = \cray{(0,0),(1,1)}\cup\snell((1,1),(4,2))$.
By Theorem~\ref{t:lenb},
if $\sqrt{3/2}\leq\beta < \bc_0$, then $\Gamma$ is a geodesic joining
the origin to the point $\xi = (4, 2)$.
Given $t\in [0,1]$, let us consider the curve
\[
\Gamma(t) = \snell((0,0), (1+t,1))
\cup \snell((1+t, 1), (4,2)).
\]
We have that
\[
L(t) := \lng(\Gamma(t)) =
\ls(t,\beta) + \ls(2-t,\beta)\,.
\]
Moreover, for $t=0$ we have
$\Gamma(0) = \Gamma$, $L(0) = \Lambda_3+\sqd$, and
\begin{equation}\label{f:lpri}
\begin{split}
L'(0) & = \lim_{t\to 0^+}\frac{L(t)-L(0)}{t} =
\ls_t^+(0,\beta)-\ls_t^-(2,\beta)
\\ & =
\sqrt{\beta^2-\frac{1}{2}}-\sqrt{1-\sn_3^2}\,.
\end{split}
\end{equation}
We recall that, for a given $\beta>1$,
$\sn_3$ is the unique zero in $(0,1)$ of the function
\[
g(\sn) = \frac{2\sn}{\sqrt{1-\sn^2}}+\frac{\sn}{\sqrt{\beta^2-\sn^2}}-1\,.
\]
Moreover, $g$ is a strictly monotone increasing function in $(0,1)$,
with $g(0) = -1$ and $g(s) \to +\infty$ as $s\to 1^-$.
For $\beta\leq\sqrt{3/2}$, let us compute
\[
\psi(\beta) = g\left(\sqrt{\frac{3}{2}-\beta^2}\right)=
2\sqrt{\frac{3-2\beta^2}{2\beta^2-1}}
+\sqrt{\frac{3-2\beta^2}{4\beta^2-3}}-1\,.
\]
Since
\[
\psi'(\beta) = -\frac{2\beta}{\sqrt{3-2\beta^2}}\left(
\frac{3}{(4\beta^2-3)^{3/2}}+\frac{4}{(2\beta^2-1)^{3/2}}\right)<0,
\]
the map $\psi$ is strictly monotone decreasing in $[1, \sqrt{3/2}]$,
with $\psi(1) = 1$ and $\psi\left(\sqrt{3/2}\right)=-1$.
The unique zero of $\psi$ in $(1, \sqrt{3/2})$ is
$\tilde{\beta}\simeq 1.17868$, and $\tilde{\beta} > \bc_1 \simeq 1.06413$.
Hence, for $1<\beta<\tilde{\beta}$, we have that
$\sn_3 < \sqrt{3/2-\beta^2}$ and,
by (\ref{f:lpri}), $L'(0)<0$.
In conclusion, if $1<\beta<\tilde{\beta}$,
for $t>0$ small enough
we have that $L(t) < L(0)$,
and $\Gamma$ is not a geodesic.
\end{ehse}

\section{The homogenized metric}\label{homet}

As a direct consequence of Theorem \ref{t:lenb}, we obtain the complete description of the
homogenized metric $\varPhi_\beta$ for $\beta\geq \sqrt{3/2}$.
In the general case we discuss the regularity of the homogenized metric.

In order to make some usefull reductions, we need two remarks on
the distance ${d_{\beta}^{\varepsilon}} (0,\xi)$ defined in (\ref{f:defdist}).

\begin{rk}
Since $|\xi-\eta|\leq {d_{\beta}^{\varepsilon}} (\eta,\xi)\leq \beta |\xi-\eta|$,
it can be easily seen that
\begin{equation}\label{f:dapprx}
{\varPhi_\beta}(\xi)=\lim_{\varepsilon \to 0^+} {d_{\beta}^{\varepsilon}}(\eta_\varepsilon,\xi_\varepsilon)\,,
\qquad \forall\ \xi\in\R^2,\
\forall \xi_\varepsilon \to \xi\,,\
\forall \eta_\varepsilon \to 0\,.
\end{equation}
\end{rk}


\begin{rk}\label{r:symmetry}
For every $\varepsilon>0$ let $\eta_\varepsilon = (\frac{\varepsilon}{2}\,,\frac{\varepsilon}{2})$.
{}From (\ref{f:dapprx}) we have that
\[
{\varPhi_\beta}(\xi)=\lim_{\varepsilon \to 0^+}
d_{\beta}^{\varepsilon}(\eta_\varepsilon, \xi+\eta_\varepsilon)\,,
\qquad \forall \xi\in\R^2\,.
\]
Since the map $\xi\mapsto d_{\beta}^{\varepsilon}(\eta_\varepsilon, \xi+\eta_\varepsilon)$
is symmetric w.r.t.\ the coordinated axis and the diagonals passing through the origin,
it is clear that $\varPhi_\beta$ has the same symmetries.
\end{rk}

In what follows, given $(x,y)\in\R^2$ we denote
\[
M = \max\{|x|, |y|\},\qquad
m = \min\{|x|, |y|\}.
\]

\begin{theorem}\label{t:Phi}
For every $(x,y)\in\R^2$ the following hold.
\begin{itemize}
\item[(i)]
If $\beta\geq\bc_0$, then
${\varPhi_\beta}(x,y) = M + (\sqd-1)m$.
\item[(ii)]
If $\sqrt{3/2} \leq \beta \leq \bc_0$, then
\[
\varPhi_\beta(x,y) =
\begin{cases}
M+(\Lambda_3-3)m,
&\text{if $3m\leq M$},\\
\dfrac{\Lambda_3-\sqd}{2}\, M + \dfrac{3\sqd - \Lambda_3}{2}\, m,
&\text{if $3m\geq M$}\,,
\end{cases}
\]
where $\Lambda_3 = \Lambda_3(\beta)$ is the length of an
$S_3$--path, introduced in Definition~\ref{d:stre}.
\end{itemize}
\end{theorem}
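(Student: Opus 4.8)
The strategy is to read off ${\varPhi_\beta}$ from the explicit geodesic lengths in the fixed chessboard structure provided by Theorem~\ref{t:lenb}, using the representation ${\varPhi_\beta}(\xi)=\lim_{\varepsilon\to0^+}d_\beta^\varepsilon(0,\xi)$ together with a scaling identity and an approximation of an arbitrary direction by rescaled light vertices. Write $F_\beta\colon\R^2\to[0,+\infty)$ for the right-hand side of the asserted identity (formula (i) or (ii), according to the range of $\beta$). Being expressed through $M$, $m$, $|x|$, $|y|$, the function $F_\beta$ is continuous, positively $1$-homogeneous, and invariant under the reflections across the coordinate axes and the diagonals; by Remark~\ref{r:symmetry} ${\varPhi_\beta}$ enjoys the same symmetries. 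Hence it will be enough to prove ${\varPhi_\beta}(\xi)=F_\beta(\xi)$ for $\xi=(x,y)$ in the closed cone $\{0\le y\le x\}$ (for $\xi=0$ both sides vanish), in which case $M=x$, $m=y$. I would also record two elementary facts: the scaling relation $d_\beta^\varepsilon(0,\zeta)=\varepsilon\,d_\beta^1(0,\zeta/\varepsilon)$, which follows from $a_\beta^\varepsilon(z)=a_\beta(z/\varepsilon)$ by the change of variable $u\mapsto u/\varepsilon$ in the length functional; and that for a light vertex $\eta$ with $0\le y\le x$ the distance $d_\beta^1(0,\eta)$ equals the optical length $\lng(\Gamma)$ of a geodesic $\Gamma$ of the standard chessboard joining $O$ to $\eta$, hence $d_\beta^1(0,\eta)=F_\beta(\eta)$ by Theorem~\ref{t:lenb}.

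Then I would fix $\xi=(x,y)$ with $0\le y\le x$, $\xi\neq 0$. For every $\varepsilon>0$ the point $\xi/\varepsilon$ still lies in the cone $\{0\le y\le x\}$; since the light vertices are precisely the lattice points $\{(a,b)\in\Z^2\colon a+b\ \text{even}\}$, whose intersection with that cone is $\sqd$-dense in the cone, one may pick a light vertex $Q_\varepsilon$ with $Q_\varepsilon\in\{0\le y\le x\}$ and $|Q_\varepsilon-\xi/\varepsilon|\le \sqd$. Setting $\xi_\varepsilon:=\varepsilon Q_\varepsilon$ one gets $|\xi_\varepsilon-\xi|\le\sqd\,\varepsilon\to 0$, so $\xi_\varepsilon\to\xi$ as $\varepsilon\to0^+$. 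Applying \eqref{f:dapprx} (with $\eta_\varepsilon\equiv 0$), then the scaling relation, then Theorem~\ref{t:lenb} at the light vertex $Q_\varepsilon$, and finally the $1$-homogeneity and continuity of $F_\beta$,
\[
{\varPhi_\beta}(\xi)=\lim_{\varepsilon\to0^+}d_\beta^\varepsilon(0,\xi_\varepsilon)
=\lim_{\varepsilon\to0^+}\varepsilon\,d_\beta^1(0,Q_\varepsilon)
=\lim_{\varepsilon\to0^+}\varepsilon\,F_\beta(Q_\varepsilon)
=\lim_{\varepsilon\to0^+}F_\beta(\xi_\varepsilon)=F_\beta(\xi)\,,
\]
which is the desired identity on $\{0\le y\le x\}$, hence everywhere by symmetry. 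For the borderline value $\beta=\bc_0$ the two formulas in (i) and (ii) coincide, since there $\Lambda_3=2+\sqd$.

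Since all the analytic work is already carried out in Theorem~\ref{t:lenb}, no substantial difficulty is left; the one point that deserves care is the approximation step, namely that the approximating light vertex $Q_\varepsilon$ can always be chosen \emph{inside} the closed cone $\{0\le y\le x\}$. This is exactly what permits Theorem~\ref{t:lenb} to be applied verbatim, without having to invoke reflection symmetries of the fixed chessboard structure, which — unlike ${\varPhi_\beta}$ — is \emph{not} invariant under reflections across the coordinate axes. One should also check the elementary continuity and $1$-homogeneity of $F_\beta$, in particular that the two branches of (ii) match on $\{3m=M\}$, where both equal $\Lambda_3\,m$.
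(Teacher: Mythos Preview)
Your proof is correct and follows essentially the same route as the paper: reduce by symmetry to the cone $\{0\le y\le x\}$, approximate $\xi$ by rescaled light vertices $\xi_\varepsilon$, apply Theorem~\ref{t:lenb} to evaluate $d_\beta^\varepsilon(0,\xi_\varepsilon)$, and pass to the limit via \eqref{f:dapprx}. The only cosmetic differences are that the paper writes down an explicit choice of $\xi_\varepsilon$ (via floor functions) instead of appealing to density of light vertices in the cone, and leaves the scaling identity $d_\beta^\varepsilon(0,\zeta)=\varepsilon\,d_\beta^1(0,\zeta/\varepsilon)$ implicit.
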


\begin{proof}
By Remark~\ref{r:symmetry} it is enough to consider the
case $0\leq y\leq x$, so that $M=x$ and $m=y$.
Let $\xi = (x,y)$ and,
for $\varepsilon>0$, let us define
\[
j = \floor{\frac{y}{\varepsilon}},\quad
n = \floor{\frac{x-j\varepsilon}{2\varepsilon}},\qquad
\xi_{\varepsilon} =(x_{\varepsilon}, y_{\varepsilon}) = ((2n+j)\varepsilon, j\varepsilon).
\]
Then
\[
|y-y_{\varepsilon}|<\varepsilon,\quad
|x-x_{\varepsilon}|<2\varepsilon,
\]
so that $|\xi-\xi_{\varepsilon}| < \varepsilon\sqrt{5}$.
Moreover
$d_\beta^{\varepsilon}(0,\xi_{\varepsilon})$
is explicitly computed in Theorem~\ref{t:lenb}.
Since $\xi_{\varepsilon}\rightarrow \xi$, by
(\ref{f:dapprx}), the conclusion follows.
\end{proof}

\begin{figure}
\includegraphics[height=4cm]{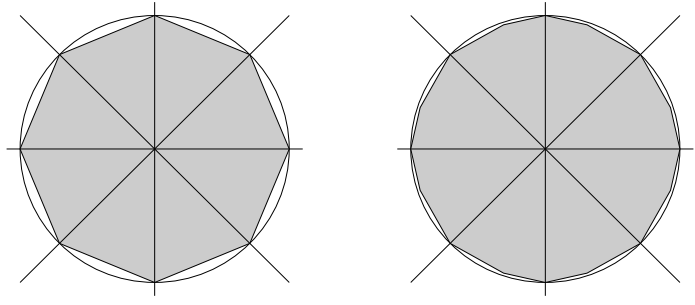}
\caption{The homogenized unit ball for $\beta\geq\bc_0$ (left)
and $\sqrt{3/2}\leq\beta<\bc_0$ (right)}
\label{fig:ottag}
\end{figure}

In the general case we have the following result.

\begin{theorem}\label{t:edges}
Let $\beta>1$ be given, and let $\kc(\beta)$ be the number defined in Definition \ref{d:kc}.
Then
$\varPhi_\beta(x,y)=M+\left(\ls(2\kc(\beta), \beta\right)+\sqrt{2}-1)m$
for every $(x,y)\in\R^2$ belonging to one of the cones
$\{(2\kc(\beta)+1)|y|\leq |x|\}$ or
$\{(2\kc(\beta)+1)|x|\leq |y|\}$.
\end{theorem}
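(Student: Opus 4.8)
Following the scheme of the proof of Theorem~\ref{t:Phi}, the plan is to compute the chessboard distance $d_\beta^1(0,P)$ for light vertices $P$ lying in the cone, and then to pass to the homogenized limit through \eqref{f:dapprx}. By the symmetries of $\varPhi_\beta$ (Remark~\ref{r:symmetry}) it is enough to treat $(x,y)$ with $0\le y\le x$ and $(2\kc(\beta)+1)\,y\le x$, so that $M=x$, $m=y$. Writing $\kc=\kc(\beta)$, the core claim is that
\[
d_\beta^1(0,P)=(2n+j)+j\bigl(\ls(2\kc,\beta)+\sqd-1\bigr)\qquad\text{for every light vertex }P=(2n+j,j)\text{ with }\kc j\le n.
\]
Granting this, for $\xi=(x,y)$ in the cone and $\varepsilon>0$ I would set $j_\varepsilon=\floor{y/\varepsilon}$ and $n_\varepsilon=\max\bigl\{\kc j_\varepsilon,\ \floor{(x/\varepsilon-j_\varepsilon)/2}\bigr\}$; then $P_\varepsilon=(2n_\varepsilon+j_\varepsilon,j_\varepsilon)$ is a light vertex with $\kc j_\varepsilon\le n_\varepsilon$ and $\xi_\varepsilon:=\varepsilon P_\varepsilon\to\xi$, the $\max$ ensuring that $\xi_\varepsilon/\varepsilon$ remains in the cone even when $\xi$ lies on its boundary. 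The elementary scaling $d_\beta^\varepsilon(0,\xi_\varepsilon)=\varepsilon\,d_\beta^1(0,P_\varepsilon)$ and the core claim then give $d_\beta^\varepsilon(0,\xi_\varepsilon)=(2n_\varepsilon+j_\varepsilon)\varepsilon+j_\varepsilon\varepsilon\,(\ls(2\kc,\beta)+\sqd-1)\to x+y\,(\ls(2\kc,\beta)+\sqd-1)$, whence $\varPhi_\beta(x,y)=M+(\ls(2\kc,\beta)+\sqd-1)\,m$ by \eqref{f:dapprx}.

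For the \emph{upper bound} in the core claim (the cases $n=0$ or $j=0$ being trivial segments) I would use the explicit competitor obtained by concatenating $j$ consecutive translates of the Snell path joining $(0,0)$ to $(2\kc+1,1)$ --- each one rising one horizontal strip and shifting $2\kc+1$ units to the right, and hence landing again on a light vertex, since $a_\beta$ is invariant under integer translations $(a,b)$ with $a+b$ even --- followed by the horizontal segment from $(j(2\kc+1),j)$ to $(2n+j,j)$, whose length $2n-2\kc j$ is nonnegative exactly by the cone inequality $\kc j\le n$. By the definition \eqref{f:defelle} of $\ls$, the Snell path from $(0,0)$ to $(2\kc+1,1)$ has optical length $\ls(2\kc,\beta)+2\kc+\sqd$, and horizontal segments at integer heights carry $a_\beta=1$; summing, this competitor has optical length exactly $(2n+j)+j(\ls(2\kc,\beta)+\sqd-1)$.

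For the \emph{lower bound}, let $\Gamma$ be a geodesic from $O$ to $P$ as in \eqref{f:gamma}, with $n,j\ge1$. By Proposition~\ref{p:bounds}(iv) every elementary segment of $\Gamma$ makes an oriented angle in $[0,\pi/2]$ with the horizontal axis, so $\Gamma$ is nondecreasing in both coordinates; it therefore decomposes, in order, into a horizontal segment at height $0$, a curve $\Gamma_1\subseteq\{0<y<1\}$, a horizontal segment at height $1$, $\ldots$, a curve $\Gamma_j\subseteq\{j-1<y<j\}$, and a horizontal segment at height $j$ (some of them possibly trivial). Writing $A_r=\overline{\Gamma}_r\cap\{y=r-1\}$ and $B_r=\overline{\Gamma}_r\cap\{y=r\}$ (single points, by the monotonicity), each $\Gamma_r:=\Gamma\cap\{r-1<y<r\}$ lies in one horizontal strip, where the chessboard length agrees with the layered length; hence $\lng(\Gamma_r)\ge\lng(\snell(A_r,B_r))\ge\tau_r+\sqd-1+\ls(2\kc,\beta)$ by Lemma~\ref{l:disnell}, with $\tau_r$ the thickness of $\Gamma_r$. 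Since $\Gamma$ runs monotonically from $x=0$ to $x=2n+j$, the horizontal increments add up, $\sum_{r=1}^j\tau_r+\rho=2n+j$ with $\rho\ge0$ the combined length of the horizontal segments, and hence $\lng(\Gamma)=\sum_{r=1}^j\lng(\Gamma_r)+\rho\ge(2n+j)+j(\ls(2\kc,\beta)+\sqd-1)$, matching the upper bound and proving the core claim.

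The main obstacle I anticipate is the bookkeeping in the lower bound: deriving from the monotonicity of Proposition~\ref{p:bounds}(iv) the precise decomposition of $\Gamma$ into the strip pieces $\Gamma_r$ and the horizontal segments at integer heights, checking that thicknesses and horizontal lengths sum to $2n+j$, and justifying that Lemma~\ref{l:disnell} applies to each $\Gamma_r$ (that is, that $\lng(\Gamma_r)$ dominates the length of the Snell path joining $A_r$ to $B_r$, the chessboard and layered metrics coinciding in the open strip). The explicit competitor and the passage to the homogenized limit are then routine, the only mildly delicate point being the choice of the approximating light vertices $P_\varepsilon$ so that $\xi_\varepsilon/\varepsilon$ stays inside the cone on its boundary, which is handled by the $\max$ in the definition of $n_\varepsilon$.
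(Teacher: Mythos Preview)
Your proposal is correct and follows essentially the same approach as the paper: decompose a geodesic into the strip pieces $\Gamma_r$ and horizontal segments at integer heights, apply Lemma~\ref{l:disnell} to each $\Gamma_r$, sum, and compare with the explicit competitor built from $j$ translates of the optimal Snell path $\snell(O,(2\kc+1,1))$ plus a horizontal tail; then pass to the limit via \eqref{f:dapprx}. One small improvement over the paper's version is that, by separating the upper and lower bounds explicitly, you avoid the paper's preliminary reduction to $\beta\neq\bc_k$ by continuity of $\beta\mapsto\varPhi_\beta$, which is not needed for the length computation (only for the uniqueness of the minimizing $t$ in Lemma~\ref{l:disnell}).
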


\begin{proof}
{}From Remark~\ref{r:symmetry} it is enough to consider the case
$0\leq (2\kc(\beta)+1)y\leq x$.
Moreover, we can assume that $y>0$, the case $y=0$ being trivial.
Since the homogenized metric depends continuously on $\beta$, it is not restrictive to assume
$\beta\neq\bc_k$, in such a way that
\[
\ls(2\kc(\beta), \beta)<\ls(t, \beta)\,, \qquad \forall\ t\geq 0,\ t\neq 2\kc(\beta)
\]
(see Corollary~\ref{c:minb}(iii)).
For every $0<\varepsilon<y$, let $\xi_{\varepsilon}=(x_\varepsilon,y_\varepsilon)$
be the nearest light vertex to $\xi=(x,y)$
below the line $x=(2\kc(\beta)+1)y$.
Then $\xi_{\varepsilon}=\left((2n+j)\varepsilon, j\varepsilon\right)$ for
some $j\geq 1$ and $n\geq \kc(\beta)j$, and $\xi_\varepsilon \to \xi$ as $\varepsilon$ tends to 0.

We claim that
\begin{equation}\label{f:edgeps}
{d_{\beta}^{\varepsilon}}(0, \xi_\varepsilon)= x_\varepsilon+
(\ls(2\kc(\beta), \beta)+\sqrt{2}-1)y_\varepsilon\,, \qquad
\forall\ 0<\varepsilon<y\,,
\end{equation}
so that the result will follows from (\ref{f:dapprx}).
In order to prove the claim,
after a scaling, we have to depict a geodesic $\Gamma$ joining the origin to the
point $(2n+j, j)$ in the standard chessboard structure.
Let us define the class $\mathcal{S}$ 
of all Snell paths
joining the light vertices $(2m+r-1,r-1)$ and $(2m+2\kc(\beta)+r,r)$, $m,\ r\in\Z$.
We are going to show that  $\Gamma$ has to be the concatenation of
$j$ Snell paths in $\mathcal{S}$ and of horizontal segments.
As a consequence, since the length of any path in $\mathcal{S}$
equals to $\ls(2\kc(\beta),\beta)+2\kc(\beta)+\sqrt{2}$,
whereas the total length of the horizontal segments is $2(n-\kc(\beta)j)$,
we obtain that (\ref{f:edgeps}) holds.

Let us define the paths $\Gamma_1,\ldots,\Gamma_j$ as in (\ref{f:gammar}).
For every $r=1,\ldots,j$ let us denote by
$(x_r, r-1)$ and $(x_r', r)$ the endpoints of
$\overline{\Gamma_r}$, and let $\tau_r = x_r'-x_r$.
Thanks to Lemma \ref{l:disnell}, we have that
\[
\lng(\Gamma_r)\geq \ls(2\kc(\beta),\beta)+\tau_r-1+\sqrt{2}\,,
\]
for every $r=1,\ldots, j$. Then we get
\[
\begin{split}
\lng(\Gamma)&=\sum_{r=1}^j \lng(\Gamma_r)+2n+j-\sum_{r=1}^j\tau_r
\geq 2n+j+(\ls(2\kc(\beta),\beta)+\sqrt{2}-1)j
\end{split}
\]
and again by Lemma \ref{l:disnell} the equality holds if and only if $\tau_r=2\kc(\beta)$  and
$\Gamma_r\in \mathcal{S}$
for every $r=1,\ldots,j$.
\end{proof}

\begin{cor}\label{c:regul}
For every $\beta>1$ the unit ball of the homogenized metric is not strictly convex, and
its boundary is not differentiable.
\end{cor}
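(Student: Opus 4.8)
The plan is to deduce everything directly from Theorem~\ref{t:edges}. Write $c := \ls(2\kc(\beta),\beta)+\sqd-1$, so that Theorem~\ref{t:edges} provides the explicit local formula
\[
\varPhi_\beta(x,y)=|x|+c\,|y|\qquad\text{whenever } (2\kc(\beta)+1)\,|y|\le |x|.
\]
First I would record that $c>0$. This is forced by the a priori bound $\varPhi_\beta(\xi)\ge|\xi|$ (recalled in Section~\ref{s:geovere}): choosing $\xi=(1,t)$ with $0<t<1/(2\kc(\beta)+1)$ puts $\xi$ inside the above cone, whence $1+c\,t=\varPhi_\beta(1,t)\ge\sqrt{1+t^2}>1$, so $c>0$. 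This strict positivity is exactly what makes the two phenomena below genuine rather than degenerate.

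For the failure of strict convexity I would exhibit a non-degenerate segment contained in the unit sphere $\{\varPhi_\beta=1\}$. Set $t^\ast:=1/(2\kc(\beta)+1+c)>0$ and consider
\[
\Sigma:=\bigl\{(1-c\,t,\,t)\ :\ 0\le t\le t^\ast\bigr\}.
\]
For $0\le t\le t^\ast$ one has $0\le 1-c\,t$ and $(2\kc(\beta)+1)\,t\le 1-c\,t$, so each point of $\Sigma$ lies in the cone of Theorem~\ref{t:edges}, where $\varPhi_\beta(1-c\,t,t)=(1-c\,t)+c\,t=1$. Thus $\Sigma\subseteq\partial\{\varPhi_\beta\le 1\}$ is a nontrivial line segment, and the unit ball cannot be strictly convex.

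For the non-differentiability I would look at the boundary near the axis point $(1,0)$, which lies on $\{\varPhi_\beta=1\}$ since $\varPhi_\beta(1,0)=1$. Because $1>0=(2\kc(\beta)+1)\cdot 0$, a full two-dimensional neighbourhood $U$ of $(1,0)$ is contained in the cone $\{(2\kc(\beta)+1)|y|\le|x|,\ x>0\}$, and there $\varPhi_\beta(x,y)=x+c\,|y|=\max\{x+c\,y,\ x-c\,y\}$. Hence, in $U$,
\[
\{\varPhi_\beta\le 1\}=\{x+c\,y\le 1\}\cap\{x-c\,y\le 1\},
\]
the intersection of two half-planes whose bounding lines meet only at $(1,0)$ and have the two distinct normal directions $(1,c)$ and $(1,-c)$ (distinct precisely because $c>0$). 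Consequently $\partial\{\varPhi_\beta\le 1\}$ has a genuine corner at $(1,0)$ — equivalently, $(1,0)$ admits a whole one-parameter family of supporting lines — so the boundary is not differentiable there. By the symmetry of $\varPhi_\beta$ (Remark~\ref{r:symmetry}) the same holds at $(0,\pm 1)$ and $(\pm 1,0)$.

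All the computations here are elementary; the only points that need a little attention are the strict positivity of $c$ (without which the two normals would coincide and the argument would collapse) and the observation that it is an honest two-dimensional neighbourhood of the axis point $(1,0)$ — not merely a one-sided slice — that lies inside the cone of Theorem~\ref{t:edges}, so that the explicit formula $\varPhi_\beta(x,y)=x+c|y|$ may be used to detect the crease along $y=0$. Everything else is formal.
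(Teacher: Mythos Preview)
Your argument is correct and is exactly the intended route: the paper does not spell out a proof of Corollary~\ref{c:regul}, treating it as an immediate consequence of Theorem~\ref{t:edges}, and your write-up supplies precisely those details (the flat segment $\Sigma$ on $\{\varPhi_\beta=1\}$ for the failure of strict convexity, and the corner at $(1,0)$ coming from $\varPhi_\beta(x,y)=x+c|y|$ for the failure of differentiability). One tiny bibliographic slip: the bound $|\xi|\le\varPhi_\beta(\xi)$ is stated in the Introduction (from \cite{AmVi}), not in Section~\ref{s:geovere}; alternatively, $c>0$ also follows from the estimate $\ls(2k,\beta)>(\beta-1)k+1-\sqd$ in the proof of Lemma~\ref{l:bcdecr}.
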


\begin{rk}
The presence of faces in the optical ball corresponds
to nonuniqueness of the geodesics. More precisely, if $F$ is a face
of positive length, and $C=\{\lambda\eta ;\ \eta \in F,\ \lambda\geq 0\}$
is the corresponding cone, then for every $\xi\in C$, a function
$u\in AC([0,1],\R^2)$ with
$u(0) = 0$, $u(1) = \xi$,
parameterizes a geodesic if and only if
$u'(t)\in C$ for a.e.~$t\in [0,1]$.

Namely, there exists $p\in\R^2$ such that
$\Phi_{\beta}(\eta) = \pscal{p}{\eta}$ for every $\eta\in C$, and
$\Phi_{\beta}(\eta) > \pscal{p}{\eta}$
for every $\eta\in \R^2\setminus C$.
Hence, if $u'(t)\in C$ for a.e.~$t\in [0,1]$, then we get
\[
\mathcal{L}_\beta^{hom}(u)=\int_0^1 {\varPhi_\beta}(u'(t))\, dt =
\int_0^1 \pscal{p}{u'(t)}\, dt = \Phi_{\beta}(\xi),
\]
whereas $\mathcal{L}_\beta^{hom}(u)>\Phi_{\beta}(\xi)$
whenever
$\{t\in [0,1];\ u'(t)\not\in C\}$
has positive measure.
\end{rk}

As a final remark,
let us consider the chessboard structure corresponding to the
upper semicontinuous function
\begin{equation}\label{f:defat}
\tilde{a}_\beta(\xi)=
\begin{cases}
\beta & \textrm{if\ } \xi\in \left([0,1] \times [1,2]\right)
\cup \left([1,2] \times [0,1]\right) \\
1 & \textrm{otherwise}
\end{cases}
\end{equation}
which differs from the standard chessboard structure defined
in (\ref{f:defa}) by the fact that $\tilde{a} = \beta$
instead of $1$
on the sides of the squares.

In this way we obtain a new family of length functionals
$\widetilde{\mathcal{L}}_\beta^{\varepsilon}$.
In this case the existence of a geodesic joining the origin
with a point $\xi\in\R^2$ is not guaranteed.
For example, if $\xi = (\varepsilon,0)$, we have
$\widetilde{\mathcal{L}}_\beta^{\varepsilon}(u) > \varepsilon$
for every $u\in AC([0,1], \R^2)$ such that
$u(0) = 0$ and $u(1) = \xi$.
On the other hand, we can construct a minimizing sequence
$(u_n)_n$ such that $\widetilde{\mathcal{L}}_\beta^{\varepsilon}(u_n) \to \varepsilon$
for $n\to +\infty$,
defining
\[
u_n(t) =
\begin{cases}
(\varepsilon t, 2t/n),
&\text{if $t\in [0,1/2]$},\\
(\varepsilon t, 2(1-t)/n),
&\text{if $t\in [1/2,1]$}\,.
\end{cases}
\]
Nevertheless, the $\Gamma$-limit with respect to the $L^1$-topology of
the functionals $(\widetilde{\mathcal{L}}_\beta^{\varepsilon})$ coincides with
the $\Gamma$-limit $\mathcal{L}_\beta^{hom}$ of the functionals $(\mathcal{L}_\beta^{\varepsilon})$.
Namely,
the liminf inequality is certainly satisfied since
$\widetilde{\mathcal{L}}_\beta^{\varepsilon}\geq {\mathcal{L}}_\beta^{\varepsilon}$.
On the other hand, given $u\in AC([0,1], \R^2)$ and
a recovering sequence $(u_{\varepsilon})$ for $({\mathcal{L}}_\beta^{\varepsilon})$,
we can construct
a recovering sequence $(\tilde{u}_{\varepsilon})$ for $(\widetilde{\mathcal{L}}_\beta^{\varepsilon})$
in the following way:
for a given $\varepsilon$, we obtain $\tilde{u}_{\varepsilon}$ modifying $u_{\varepsilon}$
in the region where $u_{\varepsilon}(t)$ belongs to the set $S$ of the sides of squares,
in such a way that
${\mathcal{L}}_\beta^{\varepsilon}(\tilde{u}_{\varepsilon})<{\mathcal{L}}_\beta^{\varepsilon}({u}_{\varepsilon})+\varepsilon$,
and the set
$\{t\in[0,1];\ \tilde{u}_{\varepsilon}\in S\}$
has vanishing Lebesgue measure.

%
\bibliographystyle{/Ga/BibTeX/mybst}
\bibliography{/Ga/BibTeX/Graziano,/Ga/BibTeX/Ricerca,/Ga/BibTeX/Physics}
\end{document}